\newtheorem{Theorem}{Theorem}[section]
\newtheorem{Definition}[Theorem]{Definition}
\newtheorem{Proposition}[Theorem]{Proposition}
\newtheorem{Corollary}[Theorem]{Corollary}
\newtheorem{Lemma}[Theorem]{Lemma}
\newtheorem{Example}[Theorem]{Example}
\newtheorem{Conjecture}[Theorem]{Conjecture}
\begin{document}

\title[Monk's Rule and Giambelli's Formula for Peterson Varieties]{Monk's Rule and Giambelli's Formula for Peterson Varieties of All Lie Types 
}

\author{Elizabeth Drellich %etc.
}

%\authorrunning{Short form of author list} % if too long for running head

\maketitle

\begin{abstract}
A Peterson variety is a subvariety of the flag variety $G/B$ which appears in the construction of the quantum cohomology of partial flag varieties.  Each Peterson variety has a one-dimensional torus $S^1$ acting on it. We give a basis of Peterson Schubert classes for $H_{S^1}^*(Pet)$ and identify the ring generators.  In type $A$ Harada-Tymoczko gave a positive Monk formula \cite{Monk}, and Bayegan-Harada gave Giambelli's formula \cite{Giambelli} for multiplication in the cohomology ring.  This paper gives Monk's rule and Giambelli's formula for all Lie types.
%Insert your abstract here. Include keywords, PACS and mathematical
%subject classification numbers as needed.
\keywords{Peterson Variety \and  Equivariant Cohomology \and Monk's Rule \and Giambelli's Formula \and Schubert Calculus}
% \PACS{PACS code1 \and PACS code2 \and more}
% \subclass{MSC code1 \and MSC code2 \and more}
\end{abstract}

\section{Introduction}
\noindent
Each cohomology ring of a Grassmannian or flag variety has a basis of {\bf Schubert classes} indexed by the elements of the corresponding Weyl group. Classical Schubert calculus computes the cohomology rings of Grassmannians and flag varieties in terms of the Schubert classes.    This paper will ``do Schubert calculus" in the equivariant cohomology rings of Peterson varieties. \\
\\
The Peterson variety is a subvariety of the flag variety $G/B$ parameterized by a linear subspace $H_{Pet} \subseteq \mathfrak{g}$ and a regular nilpotent operator $N_0\in \mathfrak{g}$. We define the Peterson variety to be $$Pet=\{gB\in G/B : Ad(g^{-1})N_0 \in H_{Pet}\}.$$The objects $H_{Pet}$ and $N_0$ are fully defined in Section 2.
\\
\\
Peterson varieties were introduced by D. Peterson in the 1990s. Peterson constructed the small quantum cohomology of partial flag varieties from what are now Peterson varieties.  Kostant used Peterson varieties to describe the quantum cohomology of the flag manifold \cite{Kostant} and  Rietsch gave the totally non-negative part of type $A$ Peterson varieties \cite{Rietsch}.  Insko-Yong explicitly identified the singular locus of type $A$ Peterson varieties and intersected them with Schubert varieties \cite{Insko-Yong}. Harada-Tymoczko proved that there is a circle action $S^1$ which preserves Peterson varieties \cite[Lemma 5.1 (3)]{Poset Pinball}.  We study the equivariant cohomology of the Peterson variety with respect to this action.\\
\\
We use GKM theory as a model for studying equivariant cohomology, but Peterson varieties are not GKM spaces under the action of $S^1$.  Nonetheless we are able to build several structures for $H_{S^1}^*(Pet)$ corresponding to GKM structures of $H_T^*(G/B)$:
\begin{center}
\begin{tabular}{p{5cm}|p{6cm}}
GKM Property of $H_T^*(G/B)$  & Corresponding Property of $H_{S^1}^*(Pet)$\\ \hline \hline
Injects into a direct sum of polynomial rings: \newline $H_T^*(G/B)\hookrightarrow \bigoplus \limits_{(G/B)^T} H_T^*(pt)$ &Injects into a direct sum of polynomial rings: \newline $H_{S^1}^*(Pet)\hookrightarrow \bigoplus \limits_{(Pet)^{S^1}} H_T^*(pt)$ \newline  {\color{white}spacespacespace} see Theorem \ref{thm:spanning}\\ \hline
Fixed points $(G/B)^T$ indexed by elements of the Weyl group & Fixed points $(Pet)^{S^1}$ indexed by subsets of simple roots \newline {\color{white}spacespacespace} see Proposition \ref{prop: fixed points}\\ \hline
Basis of Schubert classes \newline indexed by elements of the Weyl group & Basis of Peterson Schubert classes \newline indexed by subsets of simple roots \newline {\color{white}spacespacespace} see Theorem \ref{thm:basis}\\
\end{tabular}
\end{center}
\vspace{5mm}
% under the action of a one-dimensional torus $S^1$ which we define in Section \ref{section: gkm theory}.  Harada-Tymoczko gave the $S^1$-fixed points of the Peterson variety explicitly~\cite{Monk}.  In this paper we expand the GKM-like properties of Peterson varieties as illustrated in Figure~\ref{fig:GKM theory}.\\
\noindent
Using work by Harada-Tymoczko~\cite{Poset Pinball} and Precup~\cite{Precup}, we construct a basis for the $S^1$-equivariant cohomology of Peterson varieties in {\em all} Lie types.  This construction gives a set of classes which we call {\bf Peterson Schubert classes}.  The name indicates that the classes are projections of Schubert classes, they do not satisfy the all the classical properties of Schubert classes. From the Peterson Schubert classes we select a collection which is not only linearly independent over $H_{S^1}^*(pt)$,  but also upper triangular when appropriately ordered. Moreover they span $H_{S^1}^*(Pet)$, just as the full set of Peterson Schubert classes does.  We choose these classes by analogy with the type $A$ work \cite{Monk}; we ask whether there is a direct geometric justification for this choice. As we will show, this particular collection has especially elegant multiplication rules. \\
\\
Classical Schubert calculus asks how to multiply Schubert classes; we ask how to multiply in the basis of Peterson Schubert classes.  We give a Monk's formula for multiplying a ring generator and a module generator, and a Giambelli's formula for expressing any Peterson Schubert class in the basis in terms of the ring generators. The type $A$ the equivariant cohomology of the Peterson variety was presented by Harada-Tymoczko who gave a basis and a Monk's rule for the equivariant cohomology ring ~\cite{Monk}.  A type $A$ Giambelli's formula was given by Bayegan-Harada ~\cite{Giambelli}.  This paper extends those results to all Lie types.

\subsection{Main results}
%{\color{red}
%Giving structure coefficients for different types of cohomology rings of a range of algebraic varieties has been a fertile field of research.  While ordinary, quantum, and quantum equivariant cohomologies are also frequently studies, this paper deals with equivariant cohomology. The equivariant cohomology of Grassmannians is well studied and there are multiple tools for calculating the structure constants within the ring \cite{Molev-Sagan '99} \cite{Knutson Tao Puzzels} \cite{Molev '08} \cite{Kreiman '09} \cite{Thomas Yong '12}. This paper presents bases, generating sets, and multiplication rules for the equivariant cohomology of the Peterson variety.}\\

% Graham showed that flag varieties have a Monk's formula with non-negative integer coefficients \cite[Theorem 3.1]{Graham}.  Giving these coefficients explicitly has been the work of many including Buch \cite{Buch}, %Bergeron-S\'anchez-Ortega-Zabrocki \cite{B S-O Z}, 
 %and Lam-Shimozono \cite{Lam-Shimozono}.  In the Peterson case, we give the following formula.}\\
 \noindent
Peterson Schubert classes are indexed by subsets of the set of simple roots.  To each subset $K$ of simple roots we associate 1) a reduced word $v_K$ in the Weyl group and 2) a Peterson Schubert class $p_{v_K}$.   Classes $p_{s_i}$ corresponding to one-element sets $K$ generate $H_{S^1}^*(Pet)$ as a ring over $H_{S^1}^*(pt)$ and the set $\{p_{v_K}:K\subseteq \Delta\}$ is a module basis over $H_{S^1}^*(pt)$. \\
\\
Monk's rule is an explicit formula for multiplying an arbitrary module generator class $p_{v_K}$  by a ring generator class $p_{s_i}$.  For the Peterson variety, Monk's formula gives a set of constants $c_{i,K}^J \in H_{S^1}^*(\text{pt})$ such that 
$$
p_{s_i}p_{v_K}=\sum \limits_{J\subseteq \Delta} c_{i,K}^J \cdot p_{v_J}.
$$
\noindent
We give Monk's rule in terms of localizations of Peterson Schubert classes at fixed points $wB\in (Pet)^{S^1}$. \\
\\ 
{\bf Theorem} {\it The Peterson Schubert classes satisfy 
$$p_{s_i} \cdot p_{v_K} = p_{s_i}(w_K) \cdot p_{v_K} + \sum \limits_{\substack{K\subseteq J \subseteq \Delta \\ |J|=|K|+1}} c_{i,K}^J \cdot p_{v_J}$$
where $w_K$ is the longest word of the parabolic subgroup $W_K\subseteq W$ and the coefficients are $c_{i,K}^J=(p_{s_i}(w_J)-p_{s_i}(w_K)) \cdot \frac{p_{v_K}(w_J)}{p_{v_J}(w_J)}$.}\\
\\
Our Monk's rule is uniform across Lie types. This formula is similar in complexity to the equivariant Monk's rule for $G/B$ and has positive, although occasionally non-integral, coefficients.  Giambelli's formula is much simpler for Peterson varieties than for $G/B$.\\
\\
{\bf Theorem} {\it Giambelli's formula for Peterson Schubert classes:
$$\begin{array}{ll}
|K|! \cdot p_{v_K} = \prod \limits_{\alpha_i \in K} p_{s_i} & {\text{   if $K$ is connected and of type $A_n, B_n, C_n, F_4,$ or $G_2$}}\\
\frac{|K|!}{2} \cdot p_{v_K} = \prod \limits_{\alpha_i \in K} p_{s_i} & {\text{   if $K$ is connected and of type $D_n$}}\\
\frac{|K|!}{3} \cdot p_{v_K} = \prod \limits_{\alpha_i \in K} p_{s_i}  &{\text{   if $K$ is connected and of type $E_n$}.}\end{array} $$ }\\
\noindent
In Theorem \ref{thm:reduced words} we state a version of Giambelli's formula that is uniform across all Lie types.\\
\\

\subsection{Proof techniques}
For the proof of Monk's rule and Giambelli's formula, we modify Billey's formula and Ikeda-Naruse's excited Young diagrams for the $S^1$-equivariant cohomology of $Pet$. Billey's formula evaluates an equivariant Schubert class at a fixed point \cite{Billey}.  The evaluation gives $\sigma_v(w)\in H_T^*(pt)$ as a polynomial in the simple roots.  The map sending each simple root to a variable $t$ extends to a ring homomorphism from $H_T^*(pt)$ to $H_{S^1}^*(pt)$.  We use this map to calculate $p_v(w)$, the evaluation of the Peterson Schubert class $p_v$ at the $S^1$-fixed point $wB\in Pet$.  \\
\\
Ikeda-Naruse use excited Young diagrams to encode and compute Billey's formula \cite{EYD}. Their work focuses on Grassmannian permutations. We modify excited Young diagrams to work with the longest word in a Weyl group of classical type. Our proofs only require evaluating $p_v(w)$ when $w=w_0$ and thus these modified excited Young diagrams are sufficient for the classical Lie types.\\
\\
For the proof of Giambelli's formula in the exceptional Lie types, we give explicit computations using Sage.  The complete results for types $F_4$ and $G_2$ are included in this work. The code for the type $E$ computations is available at {http://arxiv.org/abs/1311.2678}.

%We modify both Ikeda and Naruse's excited Young diagrams and Billey's formula  in order to evaluate Peterson Schubert classes at $S^1$-fixed points of Peterson varieties.

\subsection{Structure of the paper} Section 2 lays out the basic definitions of Peterson varieties, defines the appropriate torus action, and discusses the relevant parts of GKM theory.  In Section 3 we give a basis of Peterson Schubert classes for $H_{S^1}^*(Pet)$, and in Section 4 we prove Monk's rule.  Giambelli's formula for Peterson varieties is presented in Section 5.  Section 6 describes a modification of excited Young diagrams, a major tool in the proof of Giambelli's formula.  Lastly in Section 7 we prove Giambelli's formula for each Lie type.

\section{The $S^1$ Action on Peterson Varieties}

\begin{figure}[b]
\caption{The Dynkin diagrams show the order on the simple reflections. The same order is imposed on the corresponding simple roots throughout this paper.}
\label{fig:root systems}
\begin{center}
\scalebox{.8}{
\begin{tikzpicture}

\draw[] (0,0) circle [radius=0.08];
\node [below] at (0,0)  {$s_1$};
\draw[] (1,0) circle [radius=0.08];
\node [below] at (1,0)  {$s_2$};
\draw (0.08,0) -- (.92,0);
\draw[] (2,0) circle [radius=0.08];
\node [below] at (2,0)  {$s_3$};
\draw (1.08,0) -- (1.92,0);
\draw[] (3,0) circle [radius=0.08];
\node [below] at (3,0)  {$s_{n-2}$};
\draw [dashed] (2.08,0) -- (2.92,0);
\draw[] (4,0) circle [radius=0.08];
\node [below] at (4,0)  {$s_{n-1}$};
\draw (3.08,0) -- (3.92,0);
\draw[] (5,0) circle [radius=0.08];
\node [below] at (5,0)  {$s_{n}$};
\draw (4.08,0) -- (4.92,0);
\node at (-1,0){$A_n$};

\draw[] (0,-2) circle [radius=0.08];
\node [below] at (0,-2)  {$s_1$};
\draw[] (1,-2) circle [radius=0.08];
\node [below] at (1,-2)  {$s_2$};
\draw (0.08,-2) -- (.92,-2);
\draw[] (2,-2) circle [radius=0.08];
\node [below] at (2,-2)  {$s_3$};
\draw (1.08,-2) -- (1.92,-2);
\draw[] (3,-2) circle [radius=0.08];
\node [below] at (3,-2)  {$s_{n-2}$};
\draw [dashed] (2.08,-2) -- (2.92,-2);
\draw[] (4,-2) circle [radius=0.08];
\node [below] at (4,-2)  {$s_{n-1}$};
\draw (3.08,-2) -- (3.92,-2);
\draw[] (5,-2) circle [radius=0.08];
\node [below] at (5,-2)  {$s_{n}$};
\draw (4.05,-1.95) -- (4.95,-1.95);
\draw (4.05,-2.05) -- (4.95,-2.05);
\draw (4.45,-1.9) -- (4.55,-2) -- (4.45,-2.1);
\node at (-1,-2){$B_n$};

\draw[] (0,-4) circle [radius=0.08];
\node [below] at (0,-4)  {$s_1$};
\draw[] (1,-4) circle [radius=0.08];
\node [below] at (1,-4)  {$s_2$};
\draw (0.08,-4) -- (.92,-4);
\draw[] (2,-4) circle [radius=0.08];
\node [below] at (2,-4)  {$s_3$};
\draw (1.08,-4) -- (1.92,-4);
\draw[] (3,-4) circle [radius=0.08];
\node [below] at (3,-4)  {$s_{n-2}$};
\draw [dashed] (2.08,-4) -- (2.92,-4);
\draw[] (4,-4) circle [radius=0.08];
\node [below] at (4,-4)  {$s_{n-1}$};
\draw (3.08,-4) -- (3.92,-4);
\draw[] (5,-4) circle [radius=0.08];
\node [below] at (5,-4)  {$s_{n}$};
\draw (4.05,-3.95) -- (4.95,-3.95);
\draw (4.05,-4.05) -- (4.95,-4.05);
\draw (4.55,-3.9) -- (4.45,-4) -- (4.55,-4.1);
\node at (-1,-4){$C_n$};

\draw[] (0,-6) circle [radius=0.08];
\node [below] at (0,-6)  {$s_1$};
\draw[] (1,-6) circle [radius=0.08];
\node [below] at (1,-6)  {$s_2$};
\draw (0.08,-6) -- (.92,-6);
\draw[] (2,-6) circle [radius=0.08];
\node [below] at (2,-6)  {$s_3$};
\draw (1.08,-6) -- (1.92,-6);
\draw[] (3,-6) circle [radius=0.08];
\node [below] at (3,-6)  {$s_{n-3}$};
\draw [dashed] (2.08,-6) -- (2.92,-6);
\draw[] (4,-6) circle [radius=0.08];
\node [below] at (4,-6)  {$s_{n-2}$};
\draw (3.08,-6) -- (3.92,-6);
\draw[] (5,-6) circle [radius=0.08];
\node [below] at (5,-6)  {$s_{n-1}$};
\draw (4.08,-6) -- (4.92,-6);
\draw[] (4,-5) circle [radius=0.08];
\node [above] at (4,-5)  {$s_n$};
\draw (4,-5.92) -- (4,-5.08);
\node at (-1,-6){$D_n$};

\begin{scope}[shift={(8,-3)}]
\draw[] (0,0) circle [radius=0.08];
\node [below] at (0,0)  {$s_1$};
\draw[] (1,0) circle [radius=0.08];
\node [below] at (1,0)  {$s_2$};
\draw (0.08,0) -- (.92,0);
\draw[] (2,0) circle [radius=0.08];
\node [below] at (2,0)  {$s_3$};
\draw[] (3,0) circle [radius=0.08];
\node [below] at (3,0)  {$s_{4}$};
\draw [] (2.08,0) -- (2.92,0);
\draw (1.05,.05) -- (1.95,.05);
\draw (1.05,-.05) -- (1.95,-.05);
\draw (1.45,.1) -- (1.55,0) -- (1.45,-.1);
\node at (-1,0){$F_4$};
\end{scope}

\begin{scope}[shift={(8,-5)}]
\draw[] (0,0) circle [radius=0.08];
\node [below] at (0,0)  {$s_1$};
\draw[] (1,0) circle [radius=0.08];
\node [below] at (1,0)  {$s_2$};
\draw (0.08,0) -- (.92,0);
\draw (0.05,.05) -- (0.95,.05);
\draw (.05,-.05) -- (.95,-.05);
\draw (.55,.1) -- (.45,0) -- (.55,-.1);
\node at (-1,0){$G_2$};
\end{scope}

\begin{scope}[shift={(8,-1)}]
\draw[] (0,0) circle [radius=0.08];
\node [below] at (0,0)  {$s_1$};
\draw[] (1,0) circle [radius=0.08];
\node [below] at (1,0)  {$s_3$};
\draw (0.08,0) -- (.92,0);
\draw (1.08,0) -- (1.92,0);
\draw[] (2,0) circle [radius=0.08];
\node [below] at (2,0)  {$s_4$};
\draw[] (3,0) circle [radius=0.08];
\node [below] at (3,0)  {$s_{n-1}$};
\draw [dashed] (2.08,0) -- (2.92,0);
\draw[] (4,0) circle [radius=0.08];
\node [below] at (4,0)  {$s_{n}$};
\draw [] (3.08,0) -- (3.92,0);
\draw[] (2,1) circle [radius=0.08];
\node [above] at (2,1)  {$s_{2}$};
\draw [] (2,.08) -- (2,.92);
\node at (-1,0){$E_n$};
\end{scope}

\end{tikzpicture}}
\end{center}
\end{figure}
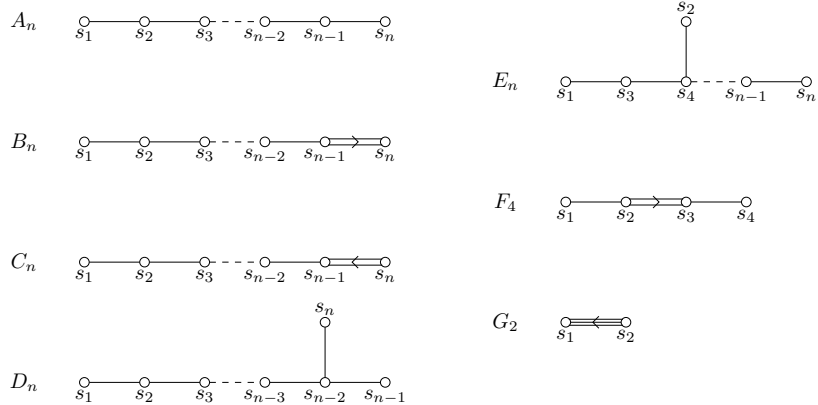

Fix a complex reductive linear algebraic group $G$, a Borel subgroup $B$, and a maximal torus $T\subseteq B \subseteq G$. This choice gives 
\begin{itemize}
\item a root system $\Phi$
\item positive roots $\Phi^+ \subset \Phi$
\item simple roots $\Delta \subset \Phi^+$
\item an associated Weyl group $W$
\item associated Lie algebras $\mathfrak{t}\subseteq \mathfrak{b} \subseteq \mathfrak{g}$
\item root spaces $\mathfrak{g_\alpha} \subset \mathfrak{g}$ for each root $\alpha \in \Phi$.
\end{itemize}
\noindent
We also choose a basis element $E_\alpha \in \mathfrak{g}_\alpha$ for each of the root spaces.  Some of our constructions rely on a specific ordering of the roots $\alpha_1, \alpha_2,\ldots, \alpha_{|\Delta |} \in \Delta$.  This ordering can be expressed in the Dynkin diagram of $\Delta$.  Figure \ref{fig:root systems} orders the simple reflections for each Lie type.\\
\\  For any Lie type the Peterson subspace in $\mathfrak{g}$ is the direct sum of $\mathfrak{b}$ and the root spaces corresponding to the negative simple roots:
$$
H_{Pet}= \mathfrak{b} \oplus \bigoplus \limits_{\alpha \in -\Delta} \mathfrak{g}_\alpha.
$$
The regular nilpotent operator $N_0 \in \mathfrak{g}$ is 
$$
N_0= \sum \limits_{\alpha \in \Delta} E_\alpha.
$$
In type $A$ the operator $N_0$ is the nilpotent with one Jordan block.
\begin{Definition}
The Peterson variety $Pet$  is a subvariety of the flag variety defined by 
$$Pet= \lbrace gB \in G/B : Ad(g^{-1})(N_0) \in H_{Pet} \rbrace.$$
\end{Definition}
\noindent Peterson varieties are a family of regular nilpotent Hessenberg variety.  They are generally irreducible and generally not smooth \cite{Insko-Yong}.  

\subsection{GKM theory and Billey's formula}
Named for Goresky, Kottwitz, and MacPherson, GKM theory expresses the $T$-equivariant cohomology of certain spaces in terms of polynomials corresponding to $T$-fixed points \cite{GKM}.  The flag variety $G/B$ with the action of a maximal torus $T\subseteq B\subseteq G$ is such a space.  \\
\\
The structure of $H_T^*(G/B)$ is encoded in the combinatorics of the Weyl group $W$. The elements of $W$ index the $T$-equivariant Schubert classes.  Each class can be thought of as a tuple of polynomials, one for each $T$-fixed point.  As the $T$-fixed points are also indexed by the Weyl group, any ordered pair $v,w \in W$  determines a polynomial $\sigma_v(w)\in \mathbb{C}[\alpha_i:\alpha_i\in \Delta]$.  This polynomial  is the Schubert class $\sigma_v$ evaluated at the fixed point $wB\in G/B$.  \\
\\
Billey gave an explicit combinatorial formula for computing the polynomial $\sigma_v(w)$~\cite{Billey}.  Fix a reduced word for $w=s_{b_1}s_{b_2} \cdots s_{b_{\ell(w)}}$ and for $j\leq \ell(w)$ let $$\mathbf{r}(\mathbf{j},w)=s_{b_1}s_{b_2} \cdots s_{b_{i-1}}(\alpha_{b_i}).$$  Then

\begin{equation}
\label{eq:Billeys}
\sigma_v(w)= \sum \limits_{\substack{\text{reduced words} \\ v=s_{b_{j_1}}s_{b_{j_2}} \cdots s_{b_{j_{\ell(v)}}} }} \left( \prod \limits_{i=1}^{\ell(v)}  \mathbf{r}(\mathbf{j_i},w) \right).
\end{equation}
\noindent
\begin{Proposition} [Billey~\cite{Billey}]
\label{prop:billey} Properties of the polynomial $\sigma_v(w)$:
\begin{enumerate}
\item \label{billey1} The polynomial $\sigma_v(w)$ is homogeneous of degree $\ell(v)$.
\item \label{billey2} If $v \not\leq w$ then $\sigma_v(w)=0$.
\item \label{billey3} If $v\leq w$ then $\sigma_v(w) \neq 0$.
\item \label{billey4} The polynomial $\sigma_v(w)$ has non-negative integer coeffiecients, i.e. $$\sigma_v(w) \in \mathbb{Z}_{\geq 0} [\alpha_i:\alpha_i\in \Delta].$$
\item \label{billey5} The polynomial $\sigma_v(w)$ does not depend on the choice of reduced word for $w$.
\end{enumerate}
\end{Proposition}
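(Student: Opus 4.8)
The plan is to derive parts \eqref{billey1}--\eqref{billey4} directly from the combinatorial formula \eqref{eq:Billeys}, using as the one genuine ingredient the classical fact that the ``partial roots'' occurring in that formula are honest positive roots, and then to obtain the independence statement \eqref{billey5} from the geometric meaning of $\sigma_v(w)$ recorded above.

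First I would isolate the key lemma: if $w = s_{b_1} s_{b_2} \cdots s_{b_{\ell(w)}}$ is a \emph{reduced} word, then for every $j \le \ell(w)$ the element $\mathbf{r}(\mathbf{j},w) = s_{b_1} \cdots s_{b_{j-1}}(\alpha_{b_j})$ is a positive root. This is the standard inversion-set computation: since the word is reduced, $\ell(s_{b_1} \cdots s_{b_j}) = j > \ell(s_{b_1} \cdots s_{b_{j-1}})$, and right multiplication by a simple reflection raises length exactly when the element does not send the corresponding simple root to a negative root, which forces $s_{b_1} \cdots s_{b_{j-1}}(\alpha_{b_j}) \in \Phi^+$; in fact $\{\mathbf{r}(\mathbf{j},w)\}_{j=1}^{\ell(w)}$ is precisely the inversion set of $w$. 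Because every positive root is a non-negative integer combination of the simple roots, each summand of \eqref{eq:Billeys} is a product of $\ell(v)$ such linear forms, hence a homogeneous polynomial of degree $\ell(v)$ all of whose coefficients are non-negative integers. Summing over the reduced subwords preserves both the degree and the non-negativity, which yields \eqref{billey1} and \eqref{billey4} at once.

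For \eqref{billey2} and \eqref{billey3} I would invoke the subword characterization of the Bruhat order (equivalently, the strong exchange condition together with Matsumoto's theorem): for the fixed reduced word of $w$, some reduced word of $v$ occurs as a subword if and only if $v \le w$, and the index set of the sum in \eqref{eq:Billeys} is exactly this set of subwords. Thus if $v \not\le w$ the sum is empty and $\sigma_v(w) = 0$, giving \eqref{billey2}. If $v \le w$ the sum has at least one term; by the lemma every term is a nonzero polynomial with non-negative coefficients, so no cancellation can occur---any monomial that appears in some summand receives a strictly positive coefficient in the total---and therefore $\sigma_v(w) \ne 0$, giving \eqref{billey3}.

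Finally, \eqref{billey5} is where the real content sits. Here I would lean on the definition already given: $\sigma_v(w)$ \emph{is} the restriction of the equivariant Schubert class $\sigma_v$ to the $T$-fixed point $wB$, a quantity defined with no reference to any reduced expression, so \eqref{eq:Billeys} must evaluate to the same polynomial for every reduced word of $w$. A purely combinatorial alternative---which I expect to be the main obstacle if one insists on it---reduces via Matsumoto's theorem to checking that the right-hand side of \eqref{eq:Billeys} is unchanged under a single braid move $\underbrace{s_i s_j s_i \cdots}_{m_{ij}} = \underbrace{s_j s_i s_j \cdots}_{m_{ij}}$, which is a finite rank-two verification; parts \eqref{billey1}--\eqref{billey4} are short by comparison once the positivity lemma is in hand. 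Since the remainder of this paper already relies on the identification of $\sigma_v$ with a Schubert class, I would take the geometric route for \eqref{billey5} and cite Billey~\cite{Billey} for the combinatorial formula and its reduced-word independence.
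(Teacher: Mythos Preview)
Your argument is sound: the positivity of the partial roots $\mathbf{r}(\mathbf{j},w)$ for a reduced word of $w$ is exactly the inversion-set characterization, and from it parts \eqref{billey1}--\eqref{billey4} fall out as you describe; the subword criterion for Bruhat order is the right tool for \eqref{billey2}--\eqref{billey3}, and your appeal to the geometric definition of $\sigma_v(w)$ for \eqref{billey5} is legitimate in this paper's context.

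That said, you should be aware that the paper does not prove this proposition at all. It is stated with the attribution ``[Billey~\cite{Billey}]'' and used as a black box throughout---the properties are quoted from Billey's original paper and no argument is supplied. So there is no ``paper's own proof'' to compare against; you have written a proof where the paper simply cites one. Your sketch is a correct and economical justification of the cited facts, and in fact goes beyond what the paper itself provides.
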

%\noindent
%When $v$ and $w$ are words of relatively short length it is simple to calculate $\sigma_v(w)$ by hand.
\begin{Example}
Let $G/B$ have Weyl group $W=A_2$ and let $w=s_1s_2s_1$ and $v=s_1$.  The word $v$ is found as a subword of $s_1s_2s_1$ in the two places $\mathbf{s_1} s_2s_1$ and $s_1s_2\mathbf{s_1}$. 
$$\sigma_v(w)=\mathbf{r}(\mathbf{1},s_1s_2s_1) + \mathbf{r}(\mathbf{3},s_1s_2s_1)= \alpha_1 + s_1s_2(\alpha_1)=\alpha_1+ \alpha_2.$$
\end{Example}

\subsection{GKM Theory and Peterson Varieties}
\label{section: gkm theory}
GKM theory applies to the full flag variety with the action of a maximal torus $T$ ~\cite{GKM}. Since the Peterson variety is a subvariety of the flag variety, it is natural to ask if it has a similar torus action.  The torus $T$ does not preserve $Pet$ but a one-dimensional subtorus $S^1 \subseteq T$ does.  In type $A$ this subtorus is
$$S^1=
\begin{bmatrix}
t & 0 & \cdots & 0 &0\\
0 & t^2 & \cdots & 0 &0\\
\vdots & \vdots & \ddots &\vdots & \vdots \\
0&0&\cdots & t^{n-1} &0\\
0&0&\cdots &0 &t^n\\
\end{bmatrix}
\subseteq
\begin{bmatrix}
t_1 & 0 & \cdots & 0 &0\\
0 & t_2 & \cdots & 0 &0\\
\vdots & \vdots & \ddots &\vdots & \vdots \\
0&0&\cdots & t_{n-1} &0\\
0&0&\cdots &0 &t_n\\
\end{bmatrix}=T.
$$
\noindent
We define the one-dimensional torus $S^1$ in general Lie type. \\
\begin{Definition}
~\cite[Lemma 5.1]{Poset Pinball}
The characters $\alpha_1, \ldots \alpha_n\in \mathfrak{t}^*$ are a maximal $\mathbb{Z}$-linearly independent set in $\mathfrak{t}^*$.  Let $\phi:T \to (\mathbb{C}^*)^n$ be the isomorphism of linear algebraic groups $t\mapsto (\alpha_1(t), \alpha_2(t),...,\alpha_n(t))$.  Define a one-dimensional torus $S^1$ by
$$S^1= \phi^{-1}(\lbrace (c,c, \ldots , c) : c \in \mathbb{C}^* \rbrace ).$$
\end{Definition}
\begin{Proposition} \cite[Lemma 5.1]{Poset Pinball}
\label{prop: fixed points}
The torus $S^1$ acts on the Peterson variety.
\end{Proposition}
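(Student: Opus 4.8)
The plan is to verify directly that the left-multiplication action of $S^1 \subseteq T \subseteq G$ on $G/B$ carries $Pet$ into itself. Since $S^1$ is a subgroup of $G$, it certainly acts on $G/B$; the only thing to check is that this action restricts to $Pet$, i.e. that for every $s \in S^1$ and every $gB \in Pet$ the point $sgB$ again lies in $Pet$. Unwinding the definition, $sgB \in Pet$ means $Ad((sg)^{-1})(N_0) = Ad(g^{-1})\bigl(Ad(s^{-1})(N_0)\bigr) \in H_{Pet}$, so the whole argument reduces to understanding $Ad(s^{-1})(N_0)$.

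The key step is the computation of $Ad(s^{-1})(N_0)$ for $s \in S^1$. Because $s \in T$, the adjoint action is diagonal on root spaces: $Ad(s^{-1})(E_\alpha) = \alpha(s)^{-1} E_\alpha$ for each $\alpha \in \Phi$. The torus $S^1$ was defined precisely so that $\phi(s) = (c, c, \ldots, c)$ for some $c \in \mathbb{C}^*$, i.e. $\alpha_i(s) = c$ for all of the chosen $\mathbb{Z}$-linearly independent characters $\alpha_1, \ldots, \alpha_n$, which are the simple roots. Hence
$$Ad(s^{-1})(N_0) = \sum_{\alpha_i \in \Delta} \alpha_i(s)^{-1} E_{\alpha_i} = c^{-1} \sum_{\alpha_i \in \Delta} E_{\alpha_i} = c^{-1} N_0,$$
so $S^1$ acts on the regular nilpotent $N_0$ by scaling. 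With this identity in hand the conclusion is immediate: if $gB \in Pet$ then $Ad(g^{-1})(N_0) \in H_{Pet}$ by definition, and since $H_{Pet}$ is a linear subspace of $\mathfrak{g}$ it is closed under scalar multiplication, so $Ad((sg)^{-1})(N_0) = c^{-1}\, Ad(g^{-1})(N_0) \in H_{Pet}$, giving $sgB \in Pet$.

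I do not expect a genuine obstacle here: the proof is short and essentially formal. The two points requiring care are (1) the identification of the defining characters of the isomorphism $\phi$ with the simple roots $\Delta$, which is what makes all the scalars $\alpha_i(s)$ equal, and (2) the contrast with the full torus — for a general $t \in T$ the values $\alpha_i(t)$ differ across the simple root spaces, so $Ad(t^{-1})(N_0)$ is a nilpotent supported on $\Delta$ but not in general a multiple of $N_0$, and indeed need not satisfy the Peterson condition. This is exactly the reason one passes to the one-dimensional subtorus $S^1$, and it is worth flagging in the write-up.
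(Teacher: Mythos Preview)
Your argument is correct and is exactly the standard direct verification. Note that the paper itself does not supply a proof of this proposition at all: it simply cites \cite[Lemma 5.1]{Poset Pinball} and moves on, so there is no ``paper's own proof'' to compare against beyond the cited reference, whose argument is essentially the one you wrote.
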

\noindent Any point in $Pet$ fixed by $T$ will also be fixed by $S^1$.  In fact these are the only points in the Peterson variety fixed by $S^1$:
$$(Pet)^{S^1}=Pet \cap (G/B)^T.$$

\noindent
Harada and Tymoczko gave the $S^1$-fixed points of $Pet$ explicitly.  Let $K\subseteq \Delta$ be a subset of the simple roots. Define $W_K\subseteq W$ as the parabolic subgroup generated by $K$ and let $w_K$ be the longest element of $W_K$.  

\begin{Proposition}~\cite[Proposition 5.8]{Poset Pinball}
An element $wB\in G/B$ is an $S^1$-fixed point of $Pet$ if and only if $w=w_K$ for some set $K\subseteq \Delta$.
\end{Proposition}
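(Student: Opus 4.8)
The plan is to identify the $S^1$-fixed points of $Pet$ with the $T$-fixed points of $G/B$ that happen to lie on $Pet$, and then to check directly which permutation flags $wB$ satisfy the Hessenberg-type condition $\mathrm{Ad}(w^{-1})(N_0)\in H_{Pet}$. The first reduction is already recorded in the excerpt: $(Pet)^{S^1}=Pet\cap(G/B)^T$, since $S^1$ is dense enough in $T$ for the purposes of fixed-point loci on this variety (any $S^1$-fixed point must be $T$-fixed because the weights of $T$ acting on a $T$-stable neighborhood remain nonzero on the subtorus $S^1$, by the $\mathbb{Z}$-linear independence of $\alpha_1,\dots,\alpha_n$; and $T$-fixed points on $Pet$ are automatically $S^1$-fixed). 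So the task becomes: for which $w\in W$ is $w\cdot eB\in Pet$, equivalently $\mathrm{Ad}(w^{-1})(N_0)=\sum_{\alpha\in\Delta}\mathrm{Ad}(w^{-1})(E_\alpha)\in\mathfrak{b}\oplus\bigoplus_{\alpha\in-\Delta}\mathfrak{g}_\alpha$.

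The key computation is that $\mathrm{Ad}(w^{-1})(E_\alpha)$ spans the root space $\mathfrak{g}_{w^{-1}\alpha}$. Hence $\mathrm{Ad}(w^{-1})(N_0)$ is a sum of nonzero vectors, one in each root space $\mathfrak{g}_{w^{-1}\alpha_i}$ for $\alpha_i\in\Delta$, and these root spaces are distinct because $w^{-1}$ is a bijection on $\Phi$. Membership in $H_{Pet}=\mathfrak{b}\oplus\bigoplus_{\beta\in-\Delta}\mathfrak{g}_\beta$ is then equivalent, root space by root space, to the condition that for every simple root $\alpha_i$, the root $w^{-1}\alpha_i$ is either positive (so $\mathfrak{g}_{w^{-1}\alpha_i}\subseteq\mathfrak{b}$) or a negative simple root. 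In other words, $w\in(Pet)^{S^1}$ iff $w^{-1}(\Delta)\subseteq\Phi^+\cup(-\Delta)$, i.e. the inversion set $N(w^{-1})=\{\beta\in\Phi^+:w^{-1}\beta\in\Phi^-\}$, intersected with $\Delta$, has the property that each simple root it sends negative is sent to a \emph{simple} negative root.

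The remaining step is the combinatorial identification of this set of Weyl group elements with $\{w_K:K\subseteq\Delta\}$. I would argue as follows. Given $w$ with the property above, let $K=\{\alpha_i\in\Delta: w^{-1}\alpha_i\in-\Delta\}$ be the simple roots that $w^{-1}$ sends to negative simple roots; equivalently $w\alpha_j\in-\Delta$ for $\alpha_j$ ranging over a subset, and one shows $w$ permutes $-K$ among the negative simples while keeping $\Delta\setminus K$ positive. From $w(\Delta\setminus K)\subseteq\Phi^+$ one gets that $w$ is the minimal-length left coset representative issue turned around — more precisely $w^{-1}$ sends $\Delta\setminus K$ to positive roots, so $w^{-1}$ lies in $W^{\Delta\setminus K}$ in the appropriate sense, while $w$ restricted to the span of $K$ acts as $-1$ on simple roots of $K$, which characterizes $w_K$ as an element of $W_K$; combined with length additivity $\ell(w)=\ell(w_K)+\ell(w\,w_K^{-1})$ and the observation that $w\,w_K^{-1}$ must then fix $\Delta$ pointwise, hence equal the identity, one concludes $w=w_K$. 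Conversely, $w_K$ sends each $\alpha_i\in K$ to $-\alpha_{\tau(i)}$ for the diagram automorphism $\tau$ of $K$ induced by $-w_K$, and sends $\alpha_j\notin K$ to a positive root (as $w_K$ only inverts roots in $\mathrm{span}(K)^+$), so $w_K$ satisfies the membership condition. The main obstacle is this last combinatorial step: one must be careful that $w_K$ sends simple roots of $K$ to \emph{negative simple} roots (not merely negative roots) — this is the classical fact that $-w_K$ permutes $K$ — and that no element outside $\{w_K\}$ sneaks in, which is where the length-additivity argument showing the complementary factor is trivial does the work.
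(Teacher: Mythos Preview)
The paper does not prove this proposition; it is quoted from Harada--Tymoczko \cite[Proposition~5.8]{Poset Pinball} without argument. So your attempt is being measured against the standard proof rather than anything in this paper, and your reduction to the root-theoretic condition
\[
wB\in Pet \iff w^{-1}\alpha_i\in\Phi^+\cup(-\Delta)\ \text{for every }\alpha_i\in\Delta
\]
is correct and is exactly how one begins.

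The combinatorial identification of this condition with $\{w_K:K\subseteq\Delta\}$, however, has a real gap. After setting $K=\{\alpha_i:w^{-1}\alpha_i\in-\Delta\}$ you repeatedly assume, without justification, that $w^{-1}$ carries $K$ onto $-K$ rather than onto $-K'$ for some a priori different subset $K'\subseteq\Delta$. The assertion that ``$w$ restricted to the span of $K$ acts as $-1$ on simple roots of $K$'' is false even for $w_K$ itself (the action is $-\tau$ for $\tau$ the diagram automorphism of $K$, not $-1$), and the length-additivity $\ell(w)=\ell(w_K)+\ell(w\,w_K^{-1})$ is invoked before you know $w$ has anything to do with $W_K$. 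The sentence about $w^{-1}\in W^{\Delta\setminus K}$ is also pointed at the wrong parabolic.

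Here is a clean repair of that step. Put $K'=-w^{-1}(K)\subseteq\Delta$ and $v=w_{K'}\,w^{-1}$. For $\alpha_i\in K$ one has $w^{-1}\alpha_i\in -K'$, and since $w_{K'}(-K')\subseteq K'$ this gives $v(\alpha_i)\in\Phi^+$. For $\alpha_j\notin K$ one has $w^{-1}\alpha_j\in\Phi^+$; moreover $w^{-1}\alpha_j\notin\Phi_{K'}$ because $\Phi_{K'}=w^{-1}(\Phi_K)$ and the simple root $\alpha_j$ does not lie in $\Phi_K$. Since the inversion set of $w_{K'}$ is exactly $\Phi_{K'}^+$, it follows that $v(\alpha_j)\in\Phi^+$. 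Thus $v(\Delta)\subseteq\Phi^+$, whence $v=e$ and $w=w_{K'}$; a posteriori $K'=K$. Your converse paragraph (that each $w_K$ satisfies the condition) is fine as written.
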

\noindent
{\bf Notation} We frequently refer to the fixed point $w_KB\in Pet$ by the coset representative $w_K$.\\
\\
Although $Pet$ has a torus action and torus-fixed points indexed by Weyl group elements, it is not a GKM space.  For Peterson varieties we must build the GKM-like structures we want.

\section{Peterson Schubert classes as a basis of $H_{S^1}^*(Pet)$}
Harada-Tymoczko gave a  projection from $H_T^*(G/B)$ to $H_{S^1}^*(Pet)$ in classical Lie types \cite[Theorem 5.4]{Poset Pinball}.  In this section we extend their results to all Lie types.  For classical Lie types they gave the following commutative diagram which this section will extend to all Lie types.%Theorem \ref{thm:spanning} shows that the map is a projection and Theorem \ref{thm:basis} proves it is surjective. As a result of these two theorems, the following is a commutative diagram.
\newcommand*{\longhookrightarrow}{\ensuremath{\lhook\joinrel\relbar\joinrel\rightarrow}}
\begin{equation}
\label{eq:map diagram}
\begin{matrix}
H_T^*(G/B) & \longhookrightarrow & \bigoplus \limits_{(G/B)^T} H_T^*(pt) \\
 \downarrow && \downarrow  \pi_1 \\
H_{S^1}^*(G/B)& \longhookrightarrow & \bigoplus \limits_{(G/B)^{S^1}} H_{S^1}^*(pt)\\
 \downarrow  & & \downarrow  \pi_2\\
H_{S^1}^*(Pet ) & \longhookrightarrow  & \bigoplus \limits_{(Pet)^{S^1}} H_{S^1}^*(pt)\\
\end{matrix}
\end{equation}
 A priori $H_T^*(G/B)$ is a module over $\mathbb{C}[\alpha_i:\alpha_i\in \Delta]$.  The map $\pi_1: H_T^*(pt) \to H_{S^1}^*(pt)$  is the ring homomorphism which takes simple roots $\alpha_i \in \Delta$ to the variable $t$.  The map $\pi_2$ forgets the $T$-fixed points of $G/B$ that are not in the Peterson variety. The top two injectivities are a direct result of GKM theory \cite{GKM}. We prove the bottom injectivity in Theorem \ref{thm:spanning}.

\subsection{Peterson Schubert classes}
The image of a Schubert class $\sigma_v\in \bigoplus \limits_{(G/B)^T} H_T^*(pt)$ in $\bigoplus\limits_{(Pet)^{S^1}} H_{S^1}^*(Pet)$ is denoted $p_v$ and called a {\bf Peterson Schubert class}.   The class $p_v$ has one polynomial for each $S^1$-fixed point of $Pet$  so a Peterson Schubert class can be thought of as a $2^{|\Delta |}$-tuple of polynomials in $\mathbb{C}[t]$.    Below is an example in type $A_2$.

$$
\bordermatrix{ ~ &\sigma_{s_1} \cr
1 & 0 \cr
s_1 & \alpha_1 \cr
s_2 & 0 \cr
s_1s_2 & \alpha_1 \cr
s_2s_1 & \alpha_1+\alpha_2 \cr
s_1s_2s_1 & \alpha_1+ \alpha_2 \cr}
\begin{matrix}{\pi_1}\\
\longmapsto 
\end{matrix}
\bordermatrix{ ~ & \cr
& 0 \cr
& t \cr
& 0 \cr
& t \cr
& 2t \cr
& 2t \cr}
\begin{matrix}{\pi_2}\\
\longmapsto 
\end{matrix}
\bordermatrix{ ~ & p_{s_1} \cr
& 0 \cr
 & t \cr
& 0 \cr
& \cr
&  \cr
 & 2t \cr}.
$$
\noindent

{
\begin{Theorem}
\label{thm:spanning}
The map $H_{S^1}^*(Pet) \to \bigoplus \limits_{(Pet)^{S^1}} H_{S^1}^*(pt)$ induced by the inclusion $(Pet)^{S^1} \hookrightarrow Pet$ is an injection. 
\end{Theorem}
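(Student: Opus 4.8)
The plan is to deduce the injectivity of the restriction map $H_{S^1}^*(Pet) \to \bigoplus_{(Pet)^{S^1}} H_{S^1}^*(pt)$ from a general localization theorem, using the fact that the $S^1$-action on $Pet$ has isolated fixed points described in Proposition \ref{prop: fixed points}. First I would invoke the standard result (due to Kirwan, or Atiyah--Bott, in the form used throughout GKM theory): if a torus $S^1$ acts on a space $X$ with $H_{S^1}^{\mathrm{odd}}(X)=0$ (equivalently $X$ is \emph{equivariantly formal}), then the restriction $H_{S^1}^*(X) \to H_{S^1}^*(X^{S^1})$ is injective, with kernel the torsion submodule; when $H_{S^1}^*(X)$ is free over $H_{S^1}^*(pt)$ the map is injective outright. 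So the entire burden is to show that $Pet$ is equivariantly formal for the $S^1$-action, i.e. that its cohomology is concentrated in even degrees (equivalently that $H^*(Pet)$ has no odd cohomology and $\dim_{\mathbb{C}} H^*(Pet) = |(Pet)^{S^1}| = 2^{|\Delta|}$).

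The key steps, in order, would be: (1) Recall from the literature on regular nilpotent Hessenberg varieties — this is exactly the setting of Precup's work \cite{Precup}, cited in the introduction — that $Pet$ admits an affine paving (a filtration by closed subvarieties whose successive differences are affine cells). Precup proves regular Hessenberg varieties are paved by affines in all Lie types; Peterson varieties are the regular nilpotent case. (2) Conclude from the affine paving that $H^{\mathrm{odd}}(Pet)=0$ and that $H^*(Pet)$ is free abelian with rank equal to the number of cells. (3) Check that the number of cells equals $2^{|\Delta|}$, matching the count of $S^1$-fixed points from Proposition \ref{prop: fixed points}; in fact the cells of the paving are indexed by the same data (subsets $K \subseteq \Delta$), so this is immediate. (4) Since $S^1 \subseteq T$ and the paving can be taken $T$-stable hence $S^1$-stable, equivariant formality follows: the spectral sequence $H^*(Pet) \otimes H_{S^1}^*(pt) \Rightarrow H_{S^1}^*(Pet)$ degenerates because everything is in even degree, so $H_{S^1}^*(Pet)$ is a free $H_{S^1}^*(pt)$-module of rank $2^{|\Delta|}$. (5) Apply the localization theorem: a free module maps injectively to its localization at the fixed points, because the cokernel of $H_{S^1}^*(Pet^{S^1}) \to$ (localized ring) is torsion while $H_{S^1}^*(Pet)$ is torsion-free. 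Alternatively, one can argue more directly via the $T$-equivariant picture: $H_T^*(Pet)$ injects into $\bigoplus_{(Pet)^T} H_T^*(pt)$ by equivariant formality for the $T$-action on the affine-paved $T$-variety $Pet$, and then base-changing along $\pi_1: H_T^*(pt) \to H_{S^1}^*(pt)$ (i.e. specializing all $\alpha_i \mapsto t$) preserves injectivity on the free modules in question — this is essentially the content of the bottom row of diagram \eqref{eq:map diagram}.

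The main obstacle is step (4)/(5): going from knowing $H^*(Pet)$ is even and finite-dimensional to the \emph{equivariant} injectivity statement requires care, because equivariant formality is a statement about the $S^1$-action specifically, and one must be sure the affine paving is compatible with the circle (not just with $T$) — this is fine since $S^1 \subseteq T$ and a $T$-stable paving is a fortiori $S^1$-stable, but it should be stated. A secondary subtlety is that after specializing $\alpha_i \mapsto t$ one is working over $\mathbb{C}[t]$, a PID, so ``torsion-free $\Rightarrow$ injects into localization'' is clean; but one should confirm the relevant module really is $\mathbb{C}[t]$-free and not merely torsion-free with odd-degree contributions — again this follows from $H^{\mathrm{odd}}(Pet)=0$. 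I expect the cleanest writeup simply cites Precup for the paving, Harada--Tymoczko \cite{Poset Pinball} for the $T$-equivariant formality and the commutative diagram \eqref{eq:map diagram} in classical types, and then observes that the same argument via affine pavings extends the bottom injectivity to all Lie types.
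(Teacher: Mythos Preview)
Your proposal is correct and follows essentially the same route as the paper: Precup's affine paving gives $H^{\mathrm{odd}}(Pet)=0$, hence the Serre spectral sequence for the Borel construction degenerates and $H_{S^1}^*(Pet)$ is free over $H_{S^1}^*(pt)$, and then localization yields injectivity. Your step~(3) (the cell count) and the $T$-stability remarks in step~(4) are not needed for this theorem---vanishing of odd ordinary cohomology alone forces the spectral sequence to collapse for degree reasons, with no equivariance of the paving required---but they do no harm.
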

\noindent
Theorem \ref{thm:spanning} is a generalization of Harada-Tymoczko's Theorem 5.4 to all Lie types \cite{Poset Pinball}.
\begin{proof}
We start by showing that the ordinary cohomology of $Pet$ vanishes in odd degree. Precup proved that $Pet$ is paved by complex affines for any Lie type ~\cite[Theorem 5.4]{Precup}.  Precup also showed that the compact cohomology of the Peterson variety is only supported in even dimensions \cite[Lemma 2.7]{Precup}. Because the Peterson variety is compact, its ordinary cohomology vanishes in odd degree. \\
\\
Following Harada-Tymoczko \cite[Remark 4.11]{Poset Pinball}, the Leray-Serre spectral sequence of the Borel-equivariant cohomology of $Pet$ collapses and thus $H_{S^1}^*(Pet)$ is a free $H_{S^1}^*(pt)$-module.  Therefore the inclusion $(Pet)^{S^1}\hookrightarrow Pet$ induces an injection $$H_{S^1}^*(Pet)\hookrightarrow  \bigoplus \limits_{(Pet)^{S^1}} H_{S^1}^*(pt).$$
This concludes the proof.
%So $Pet$ is equivariantly formal~\cite{ConMath GKM Theory p174?}.  These results show that Harada-Tymoczko's Lemma 5.3 extends to all Lie types and the remainder of their proof of Theorem 5.4 is type-independent  ~\cite[Theorem 5.4]{Poset Pinball}.
  \end{proof}
\noindent
In the terminology of Harada-Tymoczko, since $(Pet)^{S^1}= Pet \cap (G/B)^T$ the pair $(Pet, S^1)$ is GKM-compatible with $(G/B,T)$. }
 
\subsection{A basis of Peterson Schubert classes}
The $S^1$-fixed points of $Pet$ are indexed by subsets $K\subseteq \Delta$ so we want to index the Peterson Schubert classes by $K\subseteq \Delta$. \\

\begin{Definition}
A subset of simple roots $K\subseteq \Delta$ is called {\bf connected} if the induced Dynkin diagram of $K$ is a connected subgraph of the Dynkin diagram of $\Delta$. 
\end{Definition}
Any subset $K\subseteq \Delta$ can be written as $K=K_1\times \cdots \times K_m$ where each $K_i$ is a maximally connected subset.  Each connected subset corresponds to its own Lie type.
\begin{Definition}
\label{def: connected part 1}
Let $K\subseteq\Delta$ be a connected subset.  We define $v_K\in W_K$ to be
$$v_K=\prod \limits_{\rm{Root}_K(i)=1}^{|K|} s_i$$ where ${\rm{Root}}_K(i)$ is the index of the corresponding root in a root system of the same Lie type as $K$, ordered as in Figure \ref{fig:root systems}.  If $K=K_1\times \cdots \times K_m$ and each $K_i$ is maximally connected then $v_K=v_{K_1}v_{K_2}\cdots v_{K_m}$.
\end{Definition}
\noindent
When $\Delta$ is not of type $D$ or $E$ this definition gives $v_K=s_{a_1} s_{a_2}\cdots s_{a_m}$ where $K=\{\alpha_{a_1},\alpha_{a_2},\ldots \alpha_{a_m}\}$ and  $a_1<a_2<\cdots a_m$.  This is the definition given in type $A$ by Harada-Tymoczko~\cite{Monk}.  Example \ref{ex: type E vk} illustrates how Definition \ref{def: connected part 1} differs from the type $A$ definition.
\begin{Example}
\label{ex: type E vk}
Let $\Delta=\{\alpha_1, \alpha_2,\alpha_3, \alpha_4, \alpha_5, \alpha_6\}$ be a the set of simple roots of a type $E_6$ root system and let $K=\Delta \setminus \{\alpha_6\}$.  The subset $K\subseteq \Delta$ represented by a marked set of vertices in the Dynkin diagram and compared to the Dynkin diagram for $D_5$.  The word $v_K$ is $s_1s_3s_4s_5s_2$. 

\begin{center}
\begin{tikzpicture}
\draw[] (0,0) circle [radius=0.08];
\node [below] at (0,0)  {$s_1$};
\draw[] (1,0) circle [radius=0.08];
\node [below] at (1,0)  {$s_3$};
\draw (0.08,0) -- (.92,0);
\draw (1.08,0) -- (1.92,0);
\draw[] (2,0) circle [radius=0.08];
\node [below] at (2,0)  {$s_4$};
\draw[] (3,0) circle [radius=0.08];
\node [below] at (3,0)  {$s_{5}$};
\draw [] (2.08,0) -- (2.92,0);
\draw[lightgray] (4,0) circle [radius=0.08];
\node [below] at (4,0)  {\color{lightgray}$s_{6}$};
\draw [lightgray] (3.08,0) -- (3.92,0);
\draw[] (2,1) circle [radius=0.08];
\node [above] at (2,1)  {$s_{2}$};
\draw [] (2,.08) -- (2,.92);
\node at (-1,0){$E_6$};
\end{tikzpicture}
\hspace{7mm}
\begin{tikzpicture}
\draw[] (0,0) circle [radius=0.08];
\node [below] at (0,0)  {$s_1$};
\draw[] (1,0) circle [radius=0.08];
\node [below] at (1,0)  {$s_2$};
\draw (0.08,0) -- (.92,0);
\draw (1.08,0) -- (1.92,0);
\draw[] (2,0) circle [radius=0.08];
\node [below] at (2,0)  {$s_3$};
\draw[] (3,0) circle [radius=0.08];
\node [below] at (3,0)  {$s_{4}$};
\draw [] (2.08,0) -- (2.92,0);
\draw[] (2,1) circle [radius=0.08];
\node [above] at (2,1)  {$s_{5}$};
\draw [] (2,.08) -- (2,.92);
\node at (-1,0){$D_5$};
\end{tikzpicture}
\end{center}
\end{Example}

%\begin{Definition}[Harada-Tymoczko~\cite{Monk}]
%\label{def:v sub k}
% For any subset of simple roots $K$ where $K=\lbrace \alpha_{a_1}, \alpha_{a_2}, \ldots , \alpha_{a_{m} } \rbrace$ and $a_1<a_2<\cdots <a_{m}$ define the word $v_K \in W$ by
%$$v_K=s_{a_1}s_{a_2} \cdots s_{a_{m}}.$$
%\end{Definition}
\noindent
Note that $v_K$ is a  Coxeter element of $W_K$.  Because of the labeling  imposed on the simple roots in Figure \ref{fig:root systems}, each subset $K$ of simple roots corresponds to exactly one word $v_K$.
\begin{Theorem}[Basis of Peterson Schubert classes]
\label{thm:basis}
The Peterson Schubert classes $\lbrace p_{v_K} : K\subseteq \Delta \rbrace$ are a basis of $H_{S^1}^*(Pet)$ as a module over the $S^1$-equivariant cohomology of a point, $H_{S^1}^*(pt)\cong \mathbb{C}[t]$.
\end{Theorem}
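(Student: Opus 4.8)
The plan is to realize $\{p_{v_K}:K\subseteq\Delta\}$ as a basis by combining the freeness already established with an explicit triangularity of these classes under localization. Write $M:=H_{S^1}^*(Pet)$. From the proof of Theorem~\ref{thm:spanning} we know $M$ is a free $\mathbb{C}[t]$-module and that $M\hookrightarrow\bigoplus_{(Pet)^{S^1}}\mathbb{C}[t]$ is injective; since this map becomes an isomorphism after inverting $t$ (the localization theorem for the compact $S^1$-space $Pet$, whose fixed set is finite), $M$ is free of rank $\#(Pet)^{S^1}=2^{|\Delta|}$, exactly the number of classes to be accounted for. So it suffices to show the $p_{v_K}$ are linearly independent over $\mathbb{C}[t]$ and span $M$, and both will come out of the matrix of evaluations $\bigl(p_{v_K}(w_J)\bigr)_{K,J\subseteq\Delta}$, using the identity $p_{v_K}(w_J)=\pi_1\bigl(\sigma_{v_K}(w_J)\bigr)$.

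The heart of the argument is the claim: order the subsets of $\Delta$ by any linear extension of $\subseteq$; then $\bigl(p_{v_K}(w_J)\bigr)$ is triangular for this order, with diagonal entries $p_{v_K}(w_K)=c_K\,t^{|K|}$ for positive integers $c_K$. For the off-diagonal vanishing: by Proposition~\ref{prop:billey}(\ref{billey2}), $p_{v_K}(w_J)=0$ unless $v_K\le w_J$ in Bruhat order. Since $v_K$ is a Coxeter element of the parabolic $W_K$ (Definition~\ref{def: connected part 1}), its support---the set of simple reflections occurring in any reduced word---is exactly $K$; because passing to a Bruhat-smaller element can only shrink the support, $v_K\le w_J$ forces $K=\operatorname{supp}(v_K)\subseteq\operatorname{supp}(w_J)=J$, and conversely $K\subseteq J$ gives $v_K\le w_K\le w_J$. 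Thus $p_{v_K}(w_J)\ne 0$ implies $K\subseteq J$. For the diagonal: $v_K\le w_K$ gives $\sigma_{v_K}(w_K)\ne 0$ by Proposition~\ref{prop:billey}(\ref{billey3}), and it is homogeneous of degree $\ell(v_K)=|K|$ with non-negative integer coefficients by Proposition~\ref{prop:billey}(\ref{billey1}),(\ref{billey4}); applying $\pi_1$, which sends every simple root to $t$, yields $c_K\,t^{|K|}$ where $c_K$ is the (positive integer) sum of those coefficients.

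From the claim, the determinant of the evaluation matrix is $\prod_K c_K\cdot t^{\sum_K|K|}\neq 0$ in $\mathbb{C}[t]$, so the $p_{v_K}$ are linearly independent over $\mathbb{C}[t]$. To upgrade this to a basis I use the grading. The class $p_{v_K}$ has cohomological degree $2|K|$, and there are $\binom{|\Delta|}{j}$ subsets $K$ with $|K|=j$. Because $Pet$ is paved by complex affine cells (Precup, as used in the proof of Theorem~\ref{thm:spanning}), its Poincaré polynomial is $(1+q^2)^{|\Delta|}$, i.e. $\dim_{\mathbb{C}}H^{2j}(Pet)=\binom{|\Delta|}{j}$; since $M$ is graded free over $\mathbb{C}[t]$ with $M/tM\cong H^*(Pet)$, a homogeneous $\mathbb{C}[t]$-basis of $M$ has exactly $\binom{|\Delta|}{j}$ elements in degree $2j$. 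Hence the free submodule $\sum_K\mathbb{C}[t]\,p_{v_K}$ has the same degreewise (and finite) Hilbert series as $M$, which forces $\sum_K\mathbb{C}[t]\,p_{v_K}=M$. Therefore $\{p_{v_K}:K\subseteq\Delta\}$ is a $\mathbb{C}[t]$-basis of $H_{S^1}^*(Pet)$.

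The main obstacle is the triangularity claim, and within it the uniform, type-independent control of which evaluations vanish: this rests on $v_K$ being a Coxeter element of $W_K$ with full support $K$---which is precisely why Definition~\ref{def: connected part 1} labels the simple roots as in Figure~\ref{fig:root systems}---together with the compatibility of Bruhat order with support, and on Billey's formula to pin down the diagonal entries. A secondary subtlety is the final passage from linear independence to spanning, which genuinely needs the Betti numbers of $Pet$, not merely the rank of $M$; for this I would invoke Precup's paving by affines to get the Poincaré polynomial $(1+q^2)^{|\Delta|}$.
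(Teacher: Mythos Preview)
Your proof is correct and follows essentially the same approach as the paper: establish triangularity of the evaluation matrix $(p_{v_K}(w_J))$ via the support argument (the paper's Lemmas~\ref{J subset K} and~\ref{non-zero}), deduce linear independence, and then upgrade to a basis by matching the degreewise count of classes against Precup's Betti numbers. The paper packages the final step by citing \cite[Proposition~A.1]{Monk} rather than writing out the Hilbert series comparison, and it does not invoke the localization theorem for the rank (that detour is harmless but unnecessary once you use the graded count anyway).
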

\noindent
This is a version of Harada-Tymoczko's Theorem 5.9 \cite{Poset Pinball}.  With Precup's work and Lemmas \ref{J subset K} and \ref{non-zero} we extend the proof to all Peterson varieties.
\begin{Lemma} \label{J subset K}
For any set of simple roots $\Delta$ and any subsets $J, K \subseteq \Delta$ the polynomial $p_{v_J}(w_K)$ is zero unless $J\subseteq K$.
\end{Lemma}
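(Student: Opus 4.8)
The plan is to translate the vanishing of $p_{v_J}(w_K)$ into a statement about the Bruhat order and then settle that statement by a support computation. By commutativity of diagram~\eqref{eq:map diagram}, the localization $p_{v_J}(w_K)$ is obtained from the Billey polynomial $\sigma_{v_J}(w_K)$ of~\eqref{eq:Billeys} by applying $\pi_1$, i.e.\ by substituting the variable $t$ for every simple root. By Proposition~\ref{prop:billey}, parts~(\ref{billey1}) and~(\ref{billey4}), the polynomial $\sigma_{v_J}(w_K)$ is homogeneous of degree $\ell(v_J)$ with non-negative integer coefficients, so its image under $\pi_1$ is $N\,t^{\ell(v_J)}$ where $N$ is the non-negative integer sum of those coefficients; hence $p_{v_J}(w_K)=0$ if and only if $\sigma_{v_J}(w_K)=0$. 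By parts~(\ref{billey2}) and~(\ref{billey3}) this happens exactly when $v_J\not\leq w_K$ in the Bruhat order, so it suffices to prove that $v_J\leq w_K$ forces $J\subseteq K$.

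For this I would invoke the subword property of the Bruhat order: if $v\leq w$, then a reduced word for $v$ appears as a subword of any reduced word for $w$, so the set of simple reflections occurring in a reduced expression for $v$---its support, which is independent of the chosen reduced word---is contained in that of $w$. It then remains to identify the supports of $v_J$ and $w_K$. Since $v_J$ is a Coxeter element of the parabolic subgroup $W_J$ (Definition~\ref{def: connected part 1}), its defining product is a reduced word---reduced because its length equals $|J|$---in which each generator $s_i$ with $\alpha_i\in J$ appears exactly once, so the support of $v_J$ is all of $J$. On the other hand, $w_K$ is the longest element of $W_K$, and its support is all of $K$: if some reduced word for $w_K$ omitted a generator $s_i$ with $\alpha_i\in K$, then $w_K$ would lie in the proper parabolic subgroup generated by $K\setminus\{\alpha_i\}$, whose longest element is strictly shorter than $w_K$, a contradiction.

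Combining these, $v_J\leq w_K$ forces $J=\mathrm{supp}(v_J)\subseteq\mathrm{supp}(w_K)=K$; contrapositively, $J\not\subseteq K$ gives $v_J\not\leq w_K$ and hence $p_{v_J}(w_K)=0$, which is the lemma. The step most in need of care---and the only real content---is the first reduction: that passing from $\sigma_{v_J}$ to $p_{v_J}$ via $\pi_1$ cannot create cancellation, which is precisely where the positivity of Billey's formula for genuine Schubert classes, Proposition~\ref{prop:billey}(\ref{billey4}), is indispensable. I would also record in passing the converse, $p_{v_J}(w_K)\neq 0$ whenever $J\subseteq K$ (which holds since then $v_J\in W_J\subseteq W_K$ satisfies $v_J\leq w_K$, the longest element of $W_K$); this is not needed for the present lemma but is what will yield the upper-triangularity behind Theorem~\ref{thm:basis}.
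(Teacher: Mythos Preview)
Your proof is correct and follows essentially the same approach as the paper's: both argue that if $J\not\subseteq K$ then $v_J\not\leq w_K$ in Bruhat order (via a support/subword argument), so $\sigma_{v_J}(w_K)=0$ by Proposition~\ref{prop:billey}(\ref{billey2}), hence $p_{v_J}(w_K)=0$. One small remark: for this particular lemma you only need the trivial implication $\sigma_{v_J}(w_K)=0\Rightarrow p_{v_J}(w_K)=\pi_1(0)=0$, so the positivity from Proposition~\ref{prop:billey}(\ref{billey4}) is not actually needed here---it is precisely what is required for the companion Lemma~\ref{non-zero}, which you correctly anticipate at the end.
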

\begin{proof}
Suppose $J \not\subseteq K$ and that $\alpha_j \in J \setminus K$.  Then $s_j \leq v_J$ and $s_j\not\leq w_K$ in the Bruhat order.  For $\sigma_{v_J}(w_K)$ to be non-zero there must be some subword of $w_K$ that is equal to $v_J$ and therefore $v_J \leq w_K$.  But $s_j \leq v_J$ implies that $s_j \leq w_K$ which is a contradiction.  Thus $\sigma_{v_J}(w_K)=0$ by Property \ref{billey2} of Proposition \ref{prop:billey} and by construction $p_{v_J}(w_K)=0$.
  \end{proof}

\begin{Lemma} \label{non-zero}
For any set of simple roots $\Delta$ and any subset $K\subseteq \Delta$
$$p_{v_K}(w_K) \neq 0.$$
\end{Lemma}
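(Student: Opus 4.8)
The plan is to compute $p_{v_K}(w_K)$ directly using the modified Billey formula described in the introduction, reducing to the case where $K$ is connected and then analyzing each Lie type. First I would observe that it suffices to treat the connected case: if $K = K_1 \times \cdots \times K_m$ is the decomposition into maximally connected components, then $v_K = v_{K_1} \cdots v_{K_m}$ and $w_K = w_{K_1} \cdots w_{K_m}$ are reduced factorizations into words supported on disjoint, non-adjacent sets of simple reflections, so Billey's formula factors as $\sigma_{v_K}(w_K) = \prod_i \sigma_{v_{K_i}}(w_{K_i})$; applying $\pi_1$ gives $p_{v_K}(w_K) = \prod_i p_{v_{K_i}}(w_{K_i})$, and a product of nonzero polynomials in $\mathbb{C}[t]$ is nonzero. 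So the problem is to show $p_{v_K}(w_K) \neq 0$ when $K$ is connected.

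For the connected case, I would apply Billey's formula (\ref{eq:Billeys}) to $\sigma_{v_K}(w_K)$: fix a reduced word for $w_K$ (the longest element of the parabolic $W_K$, which is a Weyl group of the Lie type attached to $K$), and sum over all occurrences of $v_K$ as a reduced subword. Each such occurrence contributes a product $\prod_{i=1}^{\ell(v_K)} \mathbf{r}(\mathbf{j_i}, w_K)$ of positive roots, and by Property \ref{billey4} of Proposition \ref{prop:billey} every term has non-negative integer coefficients when expanded in the simple roots. Applying $\pi_1$, which sends every simple root to $t$, each $\mathbf{r}(\mathbf{j_i}, w_K)$ becomes a positive integer multiple of $t$ (it is a positive root, hence a non-negative integer combination of simple roots with at least one positive coefficient), so each term becomes a positive integer multiple of $t^{\ell(v_K)}$. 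Crucially, since $v_K$ is a Coxeter element of $W_K$, it is a reduced subword of $w_K$ — indeed $v_K \leq w_K$ in Bruhat order — so there is at least one occurrence, hence at least one strictly positive term. Because all terms are non-negative multiples of the same monomial $t^{\ell(v_K)}$, there is no cancellation, and $p_{v_K}(w_K)$ is a strictly positive integer times $t^{\ell(v_K)}$, in particular nonzero.

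The main obstacle I anticipate is making the "at least one occurrence" claim airtight and clean: one needs that $v_K$, being a Coxeter element built from one copy of each generator in $K$, actually appears as a reduced word inside some (equivalently every, by Property \ref{billey5}) reduced word for $w_K$. This follows because $v_K \leq w_K$ in the Bruhat order — every element of $W_K$ lies below the longest element $w_K$ — and $v_K$ reduced of length $|K|$ then appears as a reduced subword of any reduced word for $w_K$; combined with Property \ref{billey3} of Proposition \ref{prop:billey}, which already guarantees $\sigma_{v_K}(w_K) \neq 0$ as a polynomial in the simple roots, the only remaining point is that $\pi_1$ does not kill it. But since $\sigma_{v_K}(w_K)$ has non-negative integer coefficients and is homogeneous of degree $\ell(v_K) = |K|$, and is not the zero polynomial, its image under $\pi_1$ is a non-negative integer multiple of $t^{|K|}$ obtained by summing all those coefficients — a sum of non-negative integers that is positive because at least one coefficient is positive. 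Hence $p_{v_K}(w_K) \neq 0$, completing the proof.
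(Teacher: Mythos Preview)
Your proof is correct and, in its final paragraph, is essentially identical to the paper's argument: $v_K \in W_K$ gives $v_K \leq w_K$, so $\sigma_{v_K}(w_K) \neq 0$ by Property~\ref{billey3}, and the non-negative integer coefficients guaranteed by Property~\ref{billey4} ensure that $\pi_1$ sends it to a strictly positive multiple of $t^{|K|}$. The reduction to connected $K$ in your first paragraph is an unnecessary detour, since this argument already works uniformly for arbitrary $K\subseteq\Delta$.
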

\begin{proof}
Since $v_K \in W_K$ we must have $v_K \leq w_K$.  By Property \ref{billey3} of Proposition \ref{prop:billey} the polynomial $\sigma_{v_K}(w_K) \in \mathbb{C}[\alpha_i:\alpha_i\in \Delta]$ is not equal to zero.  We have defined $p_{v_K}(w_K)$ to be $\pi_1(\sigma_{v_K}(w_K))$.  Since $\sigma_{v_K}(w_K)$ has positive integer coefficients over the simple roots by Property\ref{billey4} of the same proposition, its image in $\mathbb{C}[t]$ must also have positive integer coefficients.
  \end{proof}

\begin{proof}[Basis of Peterson Schubert classes]
 Impose a partial order on the sets $\lbrace K\subseteq \Delta \rbrace$ by inclusion and extend that to a total order on the sets $K$ of simple roots.  Use that total order to order the classes $\lbrace p_{v_K}\rbrace$  and the $S^1$-fixed points $w_K \in Pet$.  Lemma ~\ref{J subset K} implies that the collection $\lbrace p_{v_K}\rbrace$  is lower-triangular and  Lemma ~\ref{non-zero} implies that the collection has full rank.  Thus $\lbrace p_{v_K}\rbrace$ is a linearly independent set. By the Property \ref{billey1} of Billey's formula, the polynomial degree of $p_{v_K}$ is $|K|$ and its cohomology degree is $2|K|$.  As there are ${n}\choose{|K|}$ subsets of $\Delta$ with size $|K|$, there are exactly ${n}\choose{|K|}$ Peterson Schubert varieties with cohomology degree $2|K|$.\\
\\
A paving by affines computes the Betti numbers of $Pet$ \cite[19.1.11]{Fulton Intersection Theory}. Precup's paving by affines reveals that the dimensions of the corresponding pavings are also $n\choose |K|$ \cite[Corollary 4.13]{Precup}.  As a linearly independent set with the right number of elements of each degree, the set $\lbrace p_{v_K}\rbrace$ is a module basis of $H_{S^1}^*(Pet)$ \cite[Proposition A.1]{Monk}.
%By Theorem~\ref{thm:spanning} it suffices to show that the classes $\lbrace p_{v_K} : K\subseteq \Delta \rbrace$ are linearly independent over $\mathbb{C}[t]$. 
  \end{proof}

\begin{Example}
Below is the Peterson Schubert class basis of the $S^1$-equivariant cohomology of $Pet$  in Lie type $C_3$. The classes and fixed points are indexed by the subsets $K\subseteq \Delta$.\\
\scalebox{.8}{
\begin{tabular}{ccccccccc}
$K$ & $p_{v_\emptyset}$ & $p_{v_{\lbrace \alpha_1 \rbrace}}$ & $p_{v_{\lbrace \alpha_2 \rbrace}}$ & $p_{v_{\lbrace \alpha_1 \rbrace}}$ & $p_{v_{\lbrace \alpha_1, \alpha_2 \rbrace}}$ & $p_{v_{\lbrace \alpha_1, \alpha_3 \rbrace}}$ & $p_{v_{\lbrace \alpha_2, \alpha_3 \rbrace}}$ & $p_{v_{\lbrace \alpha_1, \alpha_2, \alpha_3 \rbrace}}$ \\
$\begin{array}{c}
\emptyset \\
\lbrace \alpha_1 \rbrace \\
\lbrace \alpha_2 \rbrace \\
\lbrace \alpha_3 \rbrace \\
{\lbrace \alpha_1, \alpha_2 \rbrace} \\
{\lbrace \alpha_1, \alpha_3 \rbrace}\\
{\lbrace \alpha_2, \alpha_3 \rbrace}\\
\lbrace \alpha_1, \alpha_2, \alpha_3 \rbrace\\
\end{array}$
&
$\begin{pmatrix}
1\\
1\\
1\\
1\\
1\\
1\\
1\\
1\\
\end{pmatrix}$
&
$\begin{pmatrix}
0\\
t\\
0\\
0\\
2t\\
t\\
0\\
5t\\
\end{pmatrix}$
&
$\begin{pmatrix}
0\\
0\\
t\\
0\\
2t\\
0\\
3t\\
8t\\
\end{pmatrix}$&
$\begin{pmatrix}
0\\
0\\
0\\
t\\
0\\
t\\
4t\\
9t\\
\end{pmatrix}$&
$\begin{pmatrix}
0\\
0\\
0\\
0\\
2t^2\\
0\\
0\\
20t^2\\
\end{pmatrix}$&
$\begin{pmatrix}
0\\
0\\
0\\
0\\
0\\
t^2\\
0\\
45t^2\\
\end{pmatrix}$&
$\begin{pmatrix}
0\\
0\\
0\\
0\\
0\\
0\\
6t^2\\
36t^2\\
\end{pmatrix}$&
$\begin{pmatrix}
0\\
0\\
0\\
0\\
0\\
0\\
0\\
60t^3\\
\end{pmatrix}$

\end{tabular}}
\end{Example}
\noindent

\section{Monk's Formula}
With the Peterson Schubert class basis for $H_{S^1}^*(Pet)$ defined in Theorem \ref{thm:basis}, we can examine the structure of $H_{S^1}^*(Pet)$ through its multiplication rules.  First we determine a minimal set of Peterson Schubert classes that generate the ring $H_{S^1}^*(Pet)$.

\begin{Lemma}
The Peterson Schubert classes $p_{s_i}$  are a minimal generating set for the ring $H_{S^1}^*(Pet)$ over $H_{S^1}^*(pt)$. 
\end{Lemma}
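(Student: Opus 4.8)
The plan is to prove two separate things: that the classes $\{p_{s_i}:\alpha_i\in\Delta\}$ generate $H_{S^1}^*(Pet)$ as an algebra over $H_{S^1}^*(pt)\cong\mathbb{C}[t]$, and that no proper subset of them does. Both arguments use only the module basis of Theorem~\ref{thm:basis}, the triangularity Lemma~\ref{J subset K}, the non-vanishing Lemma~\ref{non-zero}, and the grading of $H_{S^1}^*(Pet)$ in which $p_{v_K}$ has cohomological degree $2|K|$ and $t$ has degree $2$. In particular I will avoid invoking Giambelli's formula, which is established only in Sections 5--7; the qualitative shape of that formula is all that is needed here, and it can be obtained abstractly.

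For generation I would first show: for every $K\subseteq\Delta$ there is a nonzero scalar $c^K\in H_{S^1}^0(pt)=\mathbb{C}$ with $\prod_{\alpha_i\in K}p_{s_i}=c^K\,p_{v_K}$. Expand the product in the module basis, $\prod_{\alpha_i\in K}p_{s_i}=\sum_J c^J p_{v_J}$ with $c^J\in\mathbb{C}[t]$, and cut the sum down with two observations. The grading forces $c^J\in H_{S^1}^{2(|K|-|J|)}(pt)$, hence $c^J=0$ whenever $|J|>|K|$. Triangularity forces $c^J=0$ unless $K\subseteq J$: localization at a fixed point $w_L$ is a ring homomorphism, so the product evaluates to $\prod_{\alpha_i\in K}p_{s_i}(w_L)$, which is $0$ as soon as $K\not\subseteq L$ because $p_{s_i}(w_L)=0$ for $\alpha_i\notin L$ by Lemma~\ref{J subset K}; now if some $c^{J_0}\ne 0$ with $K\not\subseteq J_0$ and $J_0$ minimal such, then evaluating the expansion at $w_{J_0}$ leaves only the term $c^{J_0}p_{v_{J_0}}(w_{J_0})$ (Lemma~\ref{J subset K} kills every $J\not\subseteq J_0$, and minimality kills the remaining $J\subsetneq J_0$), which is nonzero by Lemma~\ref{non-zero}, contradicting that the left side vanishes at $w_{J_0}$. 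The two constraints together leave only $J=K$, with $c^K$ homogeneous of degree $0$. Evaluating $\prod_{\alpha_i\in K}p_{s_i}=c^K p_{v_K}$ at $w_K$ and using $p_{v_K}(w_K)\ne 0$ (Lemma~\ref{non-zero}) together with $p_{s_i}(w_K)\ne 0$ for $\alpha_i\in K$ — which holds since $s_i\le w_K$, so by Properties~\ref{billey3} and~\ref{billey4} of Proposition~\ref{prop:billey} its image under $\pi_1$ is a positive multiple of $t$ — shows $c^K\ne 0$ (indeed $c^K\in\mathbb{Q}_{>0}$). Then $p_{v_K}=(c^K)^{-1}\prod_{\alpha_i\in K}p_{s_i}$, so every module-basis element is a polynomial in the $p_{s_i}$, and the $p_{s_i}$ generate the ring.

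For minimality, fix $\alpha_i\in\Delta$ and pass to cohomological degree $2$. By the grading and Theorem~\ref{thm:basis}, $H_{S^1}^2(Pet)$ has $\mathbb{C}$-basis $\{t\}\cup\{p_{s_j}:\alpha_j\in\Delta\}$ (using $p_{v_\emptyset}=1$), so these classes are $\mathbb{C}$-linearly independent. On the other hand the degree-$2$ part of the subalgebra of $H_{S^1}^*(Pet)$ generated over $H_{S^1}^*(pt)$ by $\{p_{s_j}:j\ne i\}$ lies in $\mathrm{span}_{\mathbb{C}}\bigl(\{t\}\cup\{p_{s_j}:j\ne i\}\bigr)$: a monomial $t^a\prod_k p_{s_{j_k}}$ with every $j_k\ne i$ has cohomological degree $2a+2(\#\text{factors})$, which equals $2$ only when it is $t$ itself or a single $p_{s_j}$ with $j\ne i$. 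Since $p_{s_i}$ is $\mathbb{C}$-linearly independent from $\{t\}\cup\{p_{s_j}:j\ne i\}$, it does not lie in that subalgebra, so $\{p_{s_j}:j\ne i\}$ does not generate $H_{S^1}^*(Pet)$. I expect the main obstacle to be the triangularity step in the product formula: pinning down that the basis expansion of $\prod_{\alpha_i\in K}p_{s_i}$ is supported on $\{J:K\subseteq J\}$, which is where the GKM/localization picture, the inclusion order, and the two non-vanishing lemmas all must be combined; after that, the grading constraint and the degree-$2$ dimension count are routine bookkeeping.
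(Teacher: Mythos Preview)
Your proof is correct. For minimality, your argument coincides with the paper's: both reduce to the $\mathbb{C}$-linear independence of the degree-two classes guaranteed by Theorem~\ref{thm:basis} (you are slightly more careful than the paper in including $t\cdot p_{v_\emptyset}$ in the degree-two span, which is the right thing to do).

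For generation, you take a genuinely different route. The paper simply invokes Theorem~\ref{thm:by type} (Giambelli's formula), a forward reference to Section~7; the existence of a nonzero constant $C$ with $C\cdot p_{v_K}=\prod_{\alpha_i\in K}p_{s_i}$ is also the content of Lemma~\ref{lemma one product}, whose proof in turn relies on Monk's formula. You instead establish this identity directly from the module basis, the triangularity Lemma~\ref{J subset K}, the non-vanishing Lemma~\ref{non-zero}, and the grading. Your minimal-counterexample argument for the support constraint $K\subseteq J$ is sound: if $J\subseteq J_0$ and $K\not\subseteq J_0$ then automatically $K\not\subseteq J$, so minimality of $J_0$ kills every proper $J\subsetneq J_0$, leaving only the $J_0$ term on the right while the left vanishes. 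What you gain is a proof that is logically complete at the point in the paper where this lemma appears, with no forward references; what the paper gains by citing Theorem~\ref{thm:by type} is brevity and the explicit value of the constant.
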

\begin{proof}
That the classes $p_{s_i}$ generate $H_{S^1}^*(Pet)$ is a consequence of Theorem \ref{thm:by type}. Each class $p_{s_i}$ has polynomial degree one, so if $p_{s_j}$ can be expressed in terms of the other degree-one Peterson Schubert classes, it is a sum $$p_{s_j}=\sum \limits_{i\neq j} a_i \cdot p_{s_i}$$ for some coefficients $a_i\in \mathbb{C}$.  But Theorem \ref{thm:basis} shows that the Peterson Schubert classes are linearly independent, so the generating set $\{p_{s_i}: \alpha_i\in \Delta \}$ is minimal.
%It is well known that the Schubert classes $\sigma_{s_i}$ generate $H_T^*(G/B)$ \cite[Theorem 1]{On Equivariant Cohomology}  By Theorem ~\ref{thm:spanning} the  map $$\phi: H_T^*(G/B) \to H_{S^1}^*(Pet)$$ is a surjective ring homomorphism.  Fix $\beta\in  H_{S^1}^*(Pet)$ and let $\gamma \in  H_{T}^*(G/B)$ be in the preimage of $\beta$.  Then for some set of constants $c_I\in \mathbb{C}$ the element $\gamma=\sum \limits_{I\subseteq \Delta} c_I \cdot \prod \limits_{i \in I} \sigma_{s_i}$ where $I$ can be a multiset.  Thus
%$$\beta=\phi( \gamma )=\phi \left( \sum \limits_{I\subseteq \Delta} c_I \cdot \prod \limits_{i \in I} \sigma_{s_i} \right)=\sum \limits_{I\subseteq \Delta} c_I \cdot \prod \limits_{i \in I} \phi(\sigma_{s_i})=\sum \limits_{I\subseteq \Delta} c_I \cdot \prod \limits_{i \in I_\gamma} p_{s_i}.$$
   \end{proof}

\noindent
Monk's rule is an explicit formula for multiplying an arbitrary module generator class $p_{v_K}$  by a ring generator class $p_{s_i}$.  For the Peterson variety, Monk's formula gives a set of constants $c_{i,K}^J \in H_{S^1}^*(\text{pt})$ such that 
\begin{equation}
\label{eq:general monk}
p_{s_i}p_{v_K}=\sum \limits_{J\subseteq \Delta} c_{i,K}^J \cdot p_{v_J}.
\end{equation}
\noindent
The Peterson Schubert classes $\lbrace p_{v_K}: K\subseteq \Delta \rbrace$ are a module basis for $H_{S^1}^*(Pet)$ and the product of $p_{s_i}$ and $p_{v_K}$ is also in that module. Thus a unique set of constants $\lbrace c_{i,K}^J \rbrace$ solve this equation.  Because $H_{S^1}^*(pt)=\mathbb{C}[t]$ these structure constants are complex polynomials in $t$.

\begin{Theorem}[Monk's formula for Peterson varieties]
\label{thm:Monk formula}
The Peterson Schubert classes satisfy 
$$p_{s_i} \cdot p_{v_K} = p_{s_i}(w_K) \cdot p_{v_K} + \sum \limits_{\substack{J {\text{ such that}}\\
K\subseteq J \subseteq \Delta \\ |J|=|K|+1}} c_{i,K}^J \cdot p_{v_J}$$
where the coefficients $c_{i,K}^J$ are non-negative rational numbers and  $$c_{i,K}^J=(p_{s_i}(w_J)-p_{s_i}(w_K)) \cdot \frac{p_{v_K}(w_J)}{p_{v_J}(w_J)}.$$
\end{Theorem}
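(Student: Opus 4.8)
The plan is to work entirely through localization at the $S^1$-fixed points $w_L$, $L\subseteq\Delta$. This is legitimate by Theorem~\ref{thm:spanning}: $H_{S^1}^*(Pet)$ injects into $\bigoplus_{(Pet)^{S^1}} H_{S^1}^*(pt)$, so a class is determined by its values $p_v(w_L)$. Since $\{p_{v_J}:J\subseteq\Delta\}$ is a module basis (Theorem~\ref{thm:basis}), there are unique $c_{i,K}^J\in\mathbb{C}[t]$ with $p_{s_i}p_{v_K}=\sum_{J\subseteq\Delta}c_{i,K}^J p_{v_J}$; the whole proof reduces to identifying which $J$ actually occur and then evaluating the corresponding $c_{i,K}^J$ by localizing this identity at well-chosen fixed points.

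First I would cut down the index set. A degree count using Property~\ref{billey1} of Billey's formula (the polynomial degrees of $p_{v_J}$ and $p_{s_i}$ are $|J|$ and $1$) gives $\deg_t c_{i,K}^J=|K|+1-|J|$, so $c_{i,K}^J=0$ as soon as $|J|>|K|+1$. Next I would show $c_{i,K}^J=0$ unless $K\subseteq J$: take a counterexample $J_0$ minimal under inclusion among the $J$ with $c_{i,K}^{J_0}\neq 0$ and $K\not\subseteq J_0$, and localize the identity at $w_{J_0}$. The left side is $p_{s_i}(w_{J_0})\,p_{v_K}(w_{J_0})=0$ because $K\not\subseteq J_0$ (Lemma~\ref{J subset K}); on the right, Lemma~\ref{J subset K} kills every term with $J\not\subseteq J_0$, and minimality kills every term with $J\subsetneq J_0$ (none of which can contain $K$, since $K\subseteq J\subsetneq J_0$ would give $K\subseteq J_0$), leaving $c_{i,K}^{J_0}p_{v_{J_0}}(w_{J_0})=0$, whence $c_{i,K}^{J_0}=0$ by Lemma~\ref{non-zero} --- a contradiction. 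So only $J$ with $K\subseteq J$ and $|J|\le|K|+1$ survive.

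The coefficients then fall out of two more localizations. At $w_K$: every $p_{v_J}(w_K)$ with $K\subsetneq J$ vanishes by Lemma~\ref{J subset K}, so $p_{s_i}(w_K)p_{v_K}(w_K)=c_{i,K}^K p_{v_K}(w_K)$, and dividing by $p_{v_K}(w_K)\neq 0$ (Lemma~\ref{non-zero}) gives $c_{i,K}^K=p_{s_i}(w_K)$, which is the leading term of the formula. For $J$ with $K\subsetneq J$ and $|J|=|K|+1$, localizing at $w_J$ leaves, by Lemma~\ref{J subset K}, only the terms $J'=K$ and $J'=J$, so $p_{s_i}(w_J)p_{v_K}(w_J)=p_{s_i}(w_K)p_{v_K}(w_J)+c_{i,K}^J p_{v_J}(w_J)$; solving and using $p_{v_J}(w_J)\neq 0$ yields exactly $c_{i,K}^J=(p_{s_i}(w_J)-p_{s_i}(w_K))\cdot p_{v_K}(w_J)/p_{v_J}(w_J)$.

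It remains to check non-negativity and rationality. Rationality is the degree count again: $\deg_t c_{i,K}^J=|K|+1-|J|=0$, so $c_{i,K}^J$ is a constant, and each $p_v(w)$ is the $\pi_1$-image of a polynomial with non-negative integer coefficients, so the constant is rational. For non-negativity, $p_{v_K}(w_J)$ and $p_{v_J}(w_J)$ are strictly positive multiples of $t^{|K|}$ and $t^{|K|+1}$ (Properties~\ref{billey3}--\ref{billey4} of Billey's formula, using $v_K\le w_J$, together with Lemma~\ref{non-zero}), so the ratio factor is positive; it then remains to see $p_{s_i}(w_J)\ge p_{s_i}(w_K)$, which follows from $w_K\le w_J$ in the Bruhat order (a consequence of $W_K\subseteq W_J$) and the monotonicity of $\sigma_{s_i}$ along the Bruhat order --- concretely, $\sigma_{s_i}(w)=\omega_i-w\omega_i$ for the fundamental weight $\omega_i$, and $u\le w$ implies $u\omega_i-w\omega_i$ is a non-negative sum of positive roots --- after which $\pi_1$ preserves non-negativity. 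I expect this last monotonicity fact to be the only ingredient not already supplied by the lemmas in the excerpt; the combinatorial skeleton of the theorem (which $J$ appear and the closed forms of the coefficients) is forced entirely by the triangularity packaged in Lemmas~\ref{J subset K} and~\ref{non-zero} together with the injectivity of Theorem~\ref{thm:spanning}.
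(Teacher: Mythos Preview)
Your proposal is correct and follows the paper's proof almost verbatim: the degree cut-off, the minimal-counterexample argument for $K\subseteq J$, and the two localizations at $w_K$ and at $w_J$ are exactly Lemmas~\ref{lemma J>K+1}, \ref{lemma K subset J}, and the body of the proof of Theorem~\ref{thm:Monk formula}. The only divergence is in the non-negativity step: where you invoke the Chevalley-type identity $\sigma_{s_i}(w)=\omega_i-w\omega_i$ and Bruhat monotonicity of $w\mapsto w\omega_i$ on dominant weights, the paper instead writes $w_J=w_K\cdot\tilde w$ as a reduced product and reads off directly from Billey's formula that $\sigma_{s_i}(w_J)-\sigma_{s_i}(w_K)$ is a sum of positive roots $\mathbf r(\mathbf j,w_J)$ coming from the $\tilde w$ portion. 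Both arguments work; the paper's stays entirely within the Billey machinery already in place, while yours imports a standard (and correct) fact about dominant weights.
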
  
\noindent
We use two lemmas to eliminate many subsets $J\subseteq \Delta$ by showing that $c_{i,K}^J=0$. 

\begin{Lemma} \label{lemma J>K+1}
If $|J|>|K|+1$ then $c_{i,K}^J=0$.
\end{Lemma}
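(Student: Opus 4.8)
The plan is to run a pure grading argument in $H_{S^1}^*(Pet)$. First I would record that $H_{S^1}^*(Pet)$ is a graded ring and that, by Theorem \ref{thm:spanning} together with Theorem \ref{thm:basis}, the Peterson Schubert classes $\{p_{v_J}:J\subseteq\Delta\}$ form a \emph{homogeneous} module basis over $H_{S^1}^*(pt)\cong\mathbb{C}[t]$, with $t$ in cohomological degree $2$. The key input is that $p_{v_J}$ is homogeneous of cohomological degree $2|J|$: since $v_J$ is a Coxeter element of $W_J$ we have $\ell(v_J)=|J|$, and Property \ref{billey1} of Proposition \ref{prop:billey} says $\sigma_{v_J}(w)$ is homogeneous of degree $\ell(v_J)$; applying the degree-preserving ring map $\pi_1$ shows $p_{v_J}$ has polynomial degree $|J|$, i.e.\ cohomological degree $2|J|$. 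In particular $p_{s_i}$ has cohomological degree $2$.

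Next I would compare the graded pieces of the two sides of the defining relation \eqref{eq:general monk}. The left-hand side $p_{s_i}\cdot p_{v_K}$ is homogeneous of cohomological degree $2+2|K|=2(|K|+1)$. Writing each $c_{i,K}^J\in\mathbb{C}[t]$ as a sum of its homogeneous parts and using that $\{p_{v_J}\}$ is a basis over $\mathbb{C}[t]$ (so that a relation $\sum_J a_J p_{v_J}=0$ with $a_J\in\mathbb{C}[t]$ forces every $a_J=0$), the degree-$e$ component of $\sum_J c_{i,K}^J p_{v_J}$ is $\sum_J c_{i,K}^{J,\,e-2|J|}\, p_{v_J}$, and this must vanish for every $e\neq 2(|K|+1)$. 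Hence each nonzero $c_{i,K}^J$ is homogeneous of degree $2(|K|+1)-2|J|=2(|K|+1-|J|)$.

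Finally, if $|J|>|K|+1$ then $2(|K|+1-|J|)<0$, and $\mathbb{C}[t]$ has no nonzero homogeneous element of negative degree, so $c_{i,K}^J=0$. I do not anticipate a genuine obstacle: the only point needing care is the claim that expansion in the basis $\{p_{v_J}\}$ respects the grading, which is precisely the content of $\{p_{v_J}\}$ being a homogeneous basis over the graded ring $\mathbb{C}[t]$ — established in the proof of Theorem \ref{thm:basis} from Billey's formula and Precup's paving by affines — so the argument is essentially immediate once this bookkeeping is set up.
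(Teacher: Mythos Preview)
Your argument is correct and is essentially the same degree-counting approach the paper uses: both observe that $p_{s_i}\cdot p_{v_K}$ is homogeneous of degree $|K|+1$ while $p_{v_J}$ has degree $|J|$, and then invoke the linear independence of the $p_{v_J}$ from Theorem~\ref{thm:basis}. Your version is slightly more explicit about decomposing the coefficients $c_{i,K}^J\in\mathbb{C}[t]$ into homogeneous parts before applying linear independence, whereas the paper simply projects onto degrees above $|K|+1$, but the content is identical.
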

\begin{proof}
The polynomial degree of $p_{v} \in \mathbb{C}[t]$ is the length of a reduced word for $v$. Therefore the Peterson Schubert class $p_{v_K}$ has degree $|K|$ and the polynomial degree of $p_{s_i}p_{v_K}$ is $|K|+1$.  The polynomial degrees on the right- and left-hand sides of Equation~\eqref{eq:general monk} must be equal.  Take only the parts of each side of Equation~\eqref{eq:general monk} that have degree higher than $|K|+1$. Hence it follows that
$$
0=\sum \limits_{\substack{J\subseteq \Delta \\|J|>|K|+1}} c_{i,K}^J\cdot p_{v_J}.$$
The Peterson Schubert classes $p_{v_J}$  are linearly independent by Theorem~\ref{thm:basis}.  Therefore whenever $|J|>|K|+1$ the coefficient $c_{i,K}^J=0$.
    \end{proof}
%\noindent
%We can further refine Equation~\eqref{eq:general monk} by removing another set of  subsets $J\subseteq \Delta$  from the equation.

\begin{Lemma} \label{lemma K subset J}
The constant $c_{i,K}^J=0$ unless $K\subseteq J$.
\end{Lemma}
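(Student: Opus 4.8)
The plan is to localize the Monk identity \eqref{eq:general monk} at the $S^1$-fixed points $w_J$ and run an induction on $|J|$, parallel to the proof of Lemma \ref{lemma J>K+1} but driven by the inclusion-triangularity of Lemma \ref{J subset K} instead of a degree count. The one combinatorial fact that makes the induction close up is elementary: if $K\not\subseteq J$ and $J'\subseteq J$, then $K\not\subseteq J'$. So the sets $J$ with $K\not\subseteq J$ form a sub-poset of $2^{\Delta}$ closed under passing to subsets, and we may induct on $|J|$ within it.

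First I would fix a set $J$ with $K\not\subseteq J$ and evaluate both sides of \eqref{eq:general monk} at the fixed point $w_J$. By Lemma \ref{J subset K} the left-hand side $p_{s_i}(w_J)\cdot p_{v_K}(w_J)$ vanishes, since $p_{v_K}(w_J)=0$; and on the right-hand side the same lemma kills every term with $J'\not\subseteq J$. This leaves the relation $0=\sum_{J'\subseteq J} c_{i,K}^{J'}\, p_{v_{J'}}(w_J)$. For the inductive step, every proper subset $J'\subsetneq J$ still fails to contain $K$ by the observation above, so $c_{i,K}^{J'}=0$ by the inductive hypothesis; the relation collapses to $0=c_{i,K}^{J}\, p_{v_J}(w_J)$, and since $p_{v_J}(w_J)\neq 0$ by Lemma \ref{non-zero} we conclude $c_{i,K}^{J}=0$. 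The base case $J=\emptyset$ (so $K\neq\emptyset$) is immediate: the relation reads $0=c_{i,K}^{\emptyset}\cdot p_{v_\emptyset}(w_\emptyset)$ with $p_{v_\emptyset}(w_\emptyset)=1$.

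There is no real obstacle here; the only care needed is in the bookkeeping — inducting over the downward-closed family of $J$'s not containing $K$, and invoking Lemma \ref{J subset K} in the precise form "$p_{v_{J'}}(w_J)=0$ unless $J'\subseteq J$" when discarding terms. Combined with Lemma \ref{lemma J>K+1}, this lemma cuts the Monk sum down to the terms with $K\subseteq J$ and $|J|\le|K|+1$, i.e. exactly the diagonal term $J=K$ together with the summands over $J\supsetneq K$ with $|J|=|K|+1$ appearing in Theorem \ref{thm:Monk formula}; identifying the diagonal coefficient as $p_{s_i}(w_K)$ and solving for the remaining $c_{i,K}^{J}$ via a further localization at $w_J$ would then be carried out separately.
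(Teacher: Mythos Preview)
Your proof is correct and is essentially the same as the paper's: the paper argues by taking a minimal counterexample $L$ with $K\not\subseteq L$, evaluating \eqref{eq:general monk} at $w_L$, and using Lemmas \ref{J subset K} and \ref{non-zero} to isolate the term $c_{i,K}^L\,p_{v_L}(w_L)$, which is exactly your induction on $|J|$ over the downward-closed family $\{J:K\not\subseteq J\}$ recast in contrapositive form. Your explicit mention of the downward-closure observation and the base case $J=\emptyset$ makes the induction structure slightly more transparent, but the substance is identical.
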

\begin{proof}
Suppose that $L$ is the smallest counter example, i.e., $L\subseteq \Delta$ does not contain $K$ and for all $H\subsetneq L$ the coefficient $c_{i,K}^H=0$.  Evaluate Monk's formula at the $S^1$-fixed point $w_L$ to get
$$p_{s_i}(w_L) \cdot p_{v_K}(w_L)=\sum \limits_{\substack{J\subseteq \Delta\\ |J| \leq |K|+1}} c_{i,K}^J \cdot p_{v_J}(w_L).$$
\noindent
The word $v_K \not\leq w_L$ by hypothesis so the left-hand side is $0$.   If $J \not\subseteq L$ then $p_{v_J}(w_L)=0$ and thus
$$0=\sum \limits_{\substack{J\subseteq L \subseteq \Delta\\ |J| \leq |K|+1}} c_{i,K}^J \cdot p_{v_J}(w_L).$$
\noindent
By construction if $J\subsetneq L$ then $c_{i,K}^J=0$ so we are left with $$0= c_{i,K}^L \cdot p_{v_L}(w_L).$$
\noindent
By Lemma~\ref{non-zero} the evaluation $p_{v_L}(w_L) \neq 0$.  Since $H_{S^1}^*(pt)=\mathbb{C}[t]$ is an integral domain we conclude that $c_{i,K}^L =0$.
   \end{proof}

\noindent
Having determined which coefficients are zero, we give a third lemma addressing the non-zero coefficients.
\begin{Lemma}
\label{lemma ct}
Consider the map $\pi_1:H_T^*(G/B) \to H_{S^1}^*(G/B)$ from Equation ~\eqref{eq:map diagram}. Let $v,w$ be elements of the Weyl group.
The image under $\pi_1$ of the evaluation $\sigma_v(w)$ of a Schubert class $\sigma_v$ at the fixed point $w$ is a monomial $c\cdot t^m$ where $c$ is a non-negative integer and $m$ is the length of $v$.
\end{Lemma}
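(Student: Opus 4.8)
The plan is to read this off directly from Billey's formula together with the properties of $\sigma_v(w)$ recorded in Proposition~\ref{prop:billey}, since the map $\pi_1$ is precisely the homomorphism that collapses every simple-root variable to the single variable $t$. The one elementary fact underlying the argument is that a polynomial in the variables $\{\alpha_i : \alpha_i \in \Delta\}$ which is homogeneous of degree $m$ is sent by $\pi_1$ to $t^m$ times the sum of its coefficients; in one variable there is only one monomial of each total degree, so homogeneity alone forces the image to be a single monomial.

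First I would recall that by Equation~\eqref{eq:Billeys} the evaluation $\sigma_v(w)$ is an element of $\mathbb{C}[\alpha_i : \alpha_i\in \Delta]$, that by Property~\ref{billey1} of Proposition~\ref{prop:billey} it is homogeneous of degree $\ell(v)$, and that by Property~\ref{billey4} its coefficients are non-negative integers. Writing $\sigma_v(w) = \sum_j a_j M_j$, where $M_1, M_2, \ldots$ are the distinct monomials occurring, each of total degree $\ell(v)$, and each $a_j \in \mathbb{Z}_{\geq 0}$, the fact that $\pi_1$ is a ring homomorphism with $\pi_1(\alpha_i) = t$ for every $i$ means it sends each $M_j$ to $t^{\ell(v)}$. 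Hence $\pi_1(\sigma_v(w)) = \big(\sum_j a_j\big)\, t^{\ell(v)}$; setting $c = \sum_j a_j$ and $m = \ell(v)$ exhibits the image as a monomial $c\cdot t^m$ with $c$ a non-negative integer.

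There is no genuine obstacle here; the only point I would flag for completeness is the degenerate case. If $v \not\leq w$ then $\sigma_v(w) = 0$ by Property~\ref{billey2}, so $c = 0$ and the claimed form holds with the exponent immaterial; if $v \leq w$ then $\sigma_v(w) \neq 0$ by Property~\ref{billey3}, so $c > 0$ and the exponent is honestly equal to $\ell(v)$. This completes the plan.
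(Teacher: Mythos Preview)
Your proof is correct and is essentially the same as the paper's: both invoke Properties~\ref{billey1} and~\ref{billey4} of Proposition~\ref{prop:billey} to conclude that $\sigma_v(w)$ is homogeneous of degree $\ell(v)$ with non-negative integer coefficients, whence $\pi_1$ sends it to $ct^{\ell(v)}$ with $c$ the sum of those coefficients. Your explicit treatment of the degenerate case $v\not\leq w$ is a minor elaboration not present in the paper, but the core argument is identical.
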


\begin{proof}
By the Properties \ref{billey1} and \ref{billey4} of Billey's formula given in Proposition~\ref{prop:billey}, the polynomial $\sigma_v(w)$ is homogeneous of degree $\ell(v)$ with non-negative integer coefficients.  Its image $\pi_1(\sigma_v(w))$ is $ct^{\ell(v)}$ where $c$ is the sum of the integer coefficients of $\sigma_v(w)$.  
   \end{proof}
\noindent
We now prove Theorem \ref{thm:Monk formula}.
\begin{proof}[Monk's formula for Peterson varieties]
By Lemma \ref{lemma J>K+1}  the general Monk's formula in Equation~\eqref{eq:general monk} simplifies to 
$$
p_{s_i} \cdot p_{v_K} = \sum \limits_{\substack{|J|\leq|K|+1}} c_{i,K}^J \cdot p_{v_J}
$$
and Lemma \ref{lemma K subset J} further refines the equation to
\begin{equation}
\label{eq:pre-monk}
p_{s_i} \cdot p_{v_K} = c_{i,K}^K \cdot p_{v_K} + \sum \limits_{\substack{K\subsetneq J \subseteq \Delta \\ |J|=|K|+1}} c_{i,K}^J \cdot p_{v_J}.
\end{equation}
\noindent
We evaluate both sides of Equation~\eqref{eq:pre-monk} at the $S^1$-fixed point $w_K$ and use the fact that $p_{v_J}(w_K)=0$ whenever $J$ is not a subset of $K$ to obtain 
$$p_{s_i}(w_K) \cdot p_{v_K}(w_K) = c_{i,K}^K \cdot p_{v_K}(w_K).$$
\noindent
The polynomial $ p_{v_K}(w_K)$ is non-zero by Lemma \ref{non-zero}. Since $\mathbb{C}[t]$ is an integral domain we may divide both sides by $ p_{v_K}(w_K)$. This leaves $ c_{i,K}^K  = p_{s_i}(w_K).$  By Lemma~\ref{lemma ct} the polynomial $p_{s_i}(w_K)$ is a degree-one monomial with an integer coefficient. \\
\\
Next fix a subset $L\subseteq \Delta$ such that $K\subsetneq L $ and let $|L|=|K|+1$.  Evaluating at the $S^1$-fixed point $w_L$ gives
$$p_{s_i}(w_L) \cdot p_{v_K}(w_L) = p_{s_i}(w_K) \cdot p_{v_K}(w_L) + \sum \limits_{\substack{J \text{ such that}\\ K\subsetneq J \subseteq \Delta \\ |J|=|K|+1}} c_{i,K}^J \cdot p_{v_J}(w_L).$$
\noindent
But $p_{v_J}(w_L)=0$ unless $J\subseteq L$ by the Properties \ref{billey2} and \ref{billey3} of Billey's formula so in fact 
$$p_{s_i}(w_L) \cdot p_{v_K}(w_L) = p_{s_i}(w_K) \cdot p_{v_K}(w_L) + c_{i,K}^L \cdot p_{v_L}(w_L).$$
\noindent Solving for $c_{i,K}^L$ gives
\begin{equation}
\label{eq:structure constants}
c_{i,K}^L=(p_{s_i}(w_L)-p_{s_i}(w_K))\cdot \frac{p_{v_K}(w_L)}{p_{v_L}(w_L)}.
\end{equation}
\noindent
If the term $(p_{s_i}(w_L)-p_{s_i}(w_K))=0$ then the constant $c_{i,K}^L$ is non-negative and rational.  Suppose that $(p_{s_i}(w_L)-p_{s_i}(w_K))\neq 0$.  By Lemma~\ref{lemma ct} $(p_{s_i}(w_L)-p_{s_i}(w_K))$ has degree one.  By the same lemma  $\frac{p_{v_K}(w_L)}{p_{v_L}(w_L)}$ has degree $|K|-|L|=|K|-(|K|+1)=-1$.  Thus $c_{i,K}^L$ is a priori a rational number. It remains to show that $c_{i,K}^L$ is non-negative.\\
\\
It suffices to show that $(p_{s_i}(w_L)-p_{s_i}(w_K))$ is non-negative because $\frac{p_{v_K}(w_L)}{p_{v_L}(w_L)}$ will always be non-negative.  The word $w_L$ can be written as $w_K\cdot \tilde{w}$ for some reduced word $\tilde{w} \in W_L$ \cite{Bjorner and Brenti}.   Let $s_{b_1}s_{b_2}\cdots s_{b_{m}}$ be a reduced word for $w_K$ and $s_{b_{m+1}} s_{b_{m+2}} \cdots s_{b_n}$ be a reduced word for $\tilde{w}$.  The length $\ell(s_i)=1$ for each $i$ so Billey's formula says 
$$\begin{array}{rl}
\sigma_{s_i}(w_K \cdot \tilde{w})&=
 \sum \limits_{{s_{b_j}=s_i} }   \mathbf{r}(\mathbf{j},w_L) \\
\\
&=
 \sum \limits_{\substack{{s_{b_j}=s_i} \\ j\leq m}}   \mathbf{r}(\mathbf{j},w_L) +  \sum \limits_{\substack{{s_{b_j}=s_i} \\ j> m}}   \mathbf{r}(\mathbf{j},w_L) \\
\\
&=
\sigma_{s_i}(w_K) +  \sum \limits_{\substack{{s_{b_j}=s_i} \\ j> m}}   \mathbf{r}(\mathbf{j},w_L) .\end{array}$$
\noindent
Since $\pi_1$ is a ring homomorphism from $\mathbb{C}[\alpha_i:\alpha_i\in \Delta]$ to $\mathbb{C}[t]$, we obtain
$$p_{s_i}(w_J)-p_{s_i}(w_K)=\pi_1\left(\sigma_{s_i}(w_J)-\sigma_{s_i}(w_K)\right)=\pi_1\left(\sum \limits_{\substack{{s_{b_j}=s_i} \\ j> m}}   \mathbf{r}(\mathbf{j},w_J)\right).$$
\noindent
By the definition of Billey's formula each term $ \mathbf{r}(\mathbf{j},w_L) $ is a positive root in $\Phi$.  Therefore its image $\pi_1(\mathbf{r}(\mathbf{j},w_J))$ is $c t$ for some positive integer $c$.  The $t$ is canceled by $\frac{p_{v_K}(w_L)}{p_{v_L}(w_L)}$ which has degree $-1$.  Thus $(p_{s_i}(w_L)-p_{s_i}(w_K))$ is non-negative and so is the coefficient $c_{i,K}^J$.
   \end{proof}
\noindent
 In classical Schubert calculus the structure constants are generally non-negative integers. Frequently they are in bijection with dimensions of irreducible representations.  However, structure constants for the Peterson variety are {\em not} necessarily integers.  For example in type $D_5$ let $K=\lbrace \alpha_1, \alpha_2,\alpha_3, \alpha_4 \rbrace$ and $J=\Delta$. Then
$$c_{5,K}^J=\frac{5}{2}.$$
\noindent
\begin{Conjecture}
We conjecture that in this basis, non-integral structure constants only occur in Lie types $D$ and $E$.  
\end{Conjecture}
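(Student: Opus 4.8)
The conjecture says that in types $A_n$, $B_n$, $C_n$, $F_4$, and $G_2$ every structure constant $c_{i,K}^J$ is a non-negative \emph{integer}; since Theorem~\ref{thm:Monk formula} already gives non-negativity and rationality, only integrality must be proved. The first step is to recast Equation~\eqref{eq:structure constants} combinatorially. By Billey's formula \cite{Billey} together with the ring map $\pi_1$ (Lemma~\ref{lemma ct}) we have $p_{s_i}(w_J)-p_{s_i}(w_K)=c\,t$, $p_{v_K}(w_J)=a\,t^{|K|}$, and $p_{v_J}(w_J)=b\,t^{|K|+1}$ for non-negative integers $c$, $a$, $b$: here $a$ and $b$ count the subwords of a fixed reduced word for $w_J$ that spell reduced words for $v_K$, respectively $v_J$, and $c$ counts the occurrences of $s_i$ in the factor $\tilde w$ of a reduced decomposition $w_J=w_K\tilde w$. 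Thus $c_{i,K}^J=ca/b$, and the conjecture becomes the divisibility statement $b\mid ca$ in each of the five listed types.

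Next I would reduce to the case that $J$ is connected. Let $J=J_1\sqcup\cdots\sqcup J_m$ be the decomposition into maximal connected pieces and let $J_a$ be the piece containing the single root $\alpha_\ell$ with $K=J\setminus\{\alpha_\ell\}$. The elements $w_{J_1},\dots,w_{J_m}$ pairwise commute, and for $b\neq a$ the subgroup $W_{J_b}$ fixes $\mathrm{span}(J_a)$ pointwise because distinct Dynkin components are orthogonal; feeding a concatenated reduced word into Billey's formula then shows $p_{v_K}(w_J)=\prod_b p_{v_{K\cap J_b}}(w_{J_b})$, $p_{v_J}(w_J)=\prod_b p_{v_{J_b}}(w_{J_b})$, and $p_{s_i}(w_J)=p_{s_i}(w_{J_a'})$ where $J_a'$ is the component containing $\alpha_i$. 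For $b\neq a$ one has $K\cap J_b=J_b$, so those factors cancel in Equation~\eqref{eq:structure constants}, and $c_{i,K}^J=0$ unless $\alpha_i\in J_a$. Hence $c_{i,K}^J$ depends only on the connected root system $J_a$ and the two roots $\alpha_i,\alpha_\ell$ in it, and it suffices to prove the divisibility for connected $J$ (with $K\cap J$ possibly disconnected into at most two pieces).

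For connected $J$ of type $A$ the structure constants coincide — by uniqueness of the expansion in the basis of Theorem~\ref{thm:basis}, which in type $A$ is the one used by Harada--Tymoczko — with those computed in \cite{Monk}, and there they are given by an explicit non-negative integer formula; for $J$ of type $F_4$ or $G_2$ there are only finitely many triples $(i,K,J)$, so integrality follows from the same Sage computation used for Giambelli's formula, and the complete $G_2$ and $F_4$ data can be tabulated. The remaining, and substantive, case is connected $J$ of type $B_n$ or $C_n$, where $w_J=w_0(W_J)$. Here I would use the modified excited Young diagrams of Section~6 \cite{EYD} to obtain closed forms for $a$, $b$, and $c$: Giambelli's formula (Section~5) with leading constant $|J|!$ gives $b=\tfrac1{|J|!}\prod_{\alpha_i\in J}\bigl(p_{s_i}(w_0)/t\bigr)$, while a parallel excited-diagram count expresses $a$ through analogous factorial-type counts attached to the at most two connected pieces of $K$ inside $J$; the divisibility $b\mid ca$ should then reduce to the integrality of multinomial coefficients together with the integer values $p_{s_i}(w_0)/t$.

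The main obstacle is exactly this last step for types $B$ and $C$: one must make the excited-diagram count of $a$ — equivalently of the numerator $ca$ — explicit enough, and uniform in the rank $n$, to read off the divisibility, which is delicate near the short-root end of the Dynkin diagram where the $B_n$ and $C_n$ computations diverge. This analysis should also exhibit the conjecture as sharp: Giambelli's formula carries the extra factor $\tfrac12$ in type $D$ and $\tfrac13$ in type $E$, so in those types $b$ picks up a factor of $2$ or $3$ that the numerator need not absorb — precisely what produces genuine denominators such as the $c_{5,K}^J=\tfrac52$ in the displayed $D_5$ example.
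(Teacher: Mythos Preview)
The statement you are attempting to prove is labeled a \emph{Conjecture} in the paper and is not proved there; the author offers only the $D_5$ example showing that non-integral constants do occur in type $D$, and records the conjecture without argument. So there is no ``paper's own proof'' to compare your proposal against, and any correct argument you give would go strictly beyond the paper.

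As a strategy your outline is reasonable --- the reduction to connected $J$ via orthogonality of Dynkin components is sound, and for types $A$, $F_4$, $G_2$ the appeals to \cite{Monk} and to finite computation are legitimate --- but two points keep it from being a proof. First, your description of $a$, $b$, $c$ is not right: Billey's formula does not make $p_{v_K}(w_J)/t^{|K|}$ a \emph{count} of subwords, but a sum over subwords of products of root heights $\pi_1(\mathbf r(j,w_J))/t$; likewise $c$ is a sum of heights, not a count of occurrences of $s_i$. (For instance in $A_2$ with $J=\{\alpha_1,\alpha_2\}$ there is a single subword $s_1s_2$ of $w_J=s_1s_2s_1$, yet $p_{v_J}(w_J)=2t^2$, so $b=2$.) This does not wreck the plan, but every subsequent divisibility claim must be phrased for these weighted sums, and the ``integrality of multinomial coefficients'' heuristic no longer applies directly.

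Second, and more seriously, the $B_n/C_n$ case is exactly where the work lies, and you explicitly leave it open. Your proposed formula $b=\tfrac1{|J|!}\prod_{\alpha_i\in J}(p_{s_i}(w_0)/t)$ is correct by Giambelli, but you give no formula for $a=p_{v_K}(w_J)/t^{|K|}$ when $K=J\setminus\{\alpha_\ell\}$ splits into two type-$A$ pieces inside a type-$B$ or type-$C$ system, nor any mechanism showing $b\mid ca$ uniformly in $n$ and in the position of $\alpha_\ell$. The paper's excited-diagram machinery handles only the single case $\ell=n$ (Lemma~\ref{lemma:reflection s_n}); extending it to arbitrary $\ell$ and arbitrary $i$ is the whole content of the conjecture in the classical types, and nothing in your sketch addresses it. Until that divisibility is established, what you have is a plausible plan of attack, not a proof.
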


\section{Giambelli's Formula}
Giambelli's formula expresses an arbitrary module-basis element in terms of the ring generators.  In both ordinary and equivariant cohomology many spaces have determinental Giambelli's formulae. % For the basis of $H^*(Flags)$ consisting of Schubert classes Giabmell's formula is
%$$ \sigma_\lambda = \det( \sigma_{\lambda_i+j-i})_{1\leq i,j \leq r}$$
%\noindent
%where $\sigma_\lambda$ is the Schubert class corresponding to the partition $\lambda=(\lambda_1,\ldots,\lambda_r)$ \cite{Fulton Young Tableaux}.  While we might expect and equivariant formula to have a different form, the equivariant Giambelli's formulas for Grassmannians and partial flag varieties are are also determinental formulas.  
Giambelli's formula for Peterson varieties, however, simplifies to a single product. 
\noindent
\begin{Lemma} 
\label{lemma one product}
For any Peterson Schubert class $p_{v_K}$ there exists a constant $C$ satisfying
\begin{equation}
\label{eq:Giambelli}
C \cdot p_{v_K}=\prod \limits_{\alpha_i \in K} p_{s_i}.
\end{equation}
\end{Lemma}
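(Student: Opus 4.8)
The plan is to induct on the size of $K$ using Monk's formula (Theorem~\ref{thm:Monk formula}), exploiting the fact that a connected subset $K$ has a Coxeter element $v_K$ which is a product $s_{a_1}s_{a_2}\cdots s_{a_m}$ of the distinct simple reflections in $K$, and that removing the last letter gives $v_{K'}$ for $K'=K\setminus\{\alpha_{a_m}\}$ (up to reindexing by the $\mathrm{Root}_K$ convention in Definition~\ref{def: connected part 1}). First I would reduce to the case that $K$ is connected: if $K=K_1\times\cdots\times K_r$ with each $K_i$ maximally connected, then $v_K=v_{K_1}\cdots v_{K_r}$ and the reflections appearing in distinct factors commute, so $\prod_{\alpha_i\in K}p_{s_i}$ factors along the $K_i$; granting the connected case for each $K_i$ and multiplicativity of the indexing, the general statement follows. (One must check that the product of the $v_{K_i}$-classes is a scalar multiple of $p_{v_K}$; this is where one uses that the $K_i$ sit in disjoint parts of the Dynkin diagram, so $v_K$ is genuinely reduced of length $\sum|K_i|$.)

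For the connected case I would argue by induction on $|K|$. The base case $|K|\le 1$ is immediate since $p_{v_{\{\alpha_i\}}}=p_{s_i}$ and the empty product is $1=p_{v_\emptyset}$. For the inductive step, write $K$ connected with $|K|=m$ and let $\alpha_j$ be the simple root whose reflection is the final letter of $v_K$; set $K'=K\setminus\{\alpha_j\}$, which is still connected, and $v_K=v_{K'}s_j$ (after the reindexing of Figure~\ref{fig:root systems}). By the inductive hypothesis there is a constant $C'$ with $C'\cdot p_{v_{K'}}=\prod_{\alpha_i\in K'}p_{s_i}$, so $\prod_{\alpha_i\in K}p_{s_i}=p_{s_j}\cdot\prod_{\alpha_i\in K'}p_{s_i}=C'\cdot p_{s_j}p_{v_{K'}}$. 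Now apply Monk's formula to $p_{s_j}p_{v_{K'}}$: it expands as $p_{s_j}(w_{K'})\cdot p_{v_{K'}}+\sum_{K'\subsetneq J,\ |J|=|K'|+1}c_{j,K'}^J\,p_{v_J}$. The key point is that this product has polynomial degree $|K|=|K'|+1$, while $p_{v_{K'}}$ has degree $|K'|$; so by Lemma~\ref{lemma ct} the term $p_{s_j}(w_{K'})$ lowers degree and must actually vanish — indeed $p_{s_j}(w_{K'})=0$ because $\alpha_j\notin K'$ forces $s_j\not\le w_{K'}$, hence $\sigma_{s_j}(w_{K'})=0$ by Billey (Proposition~\ref{prop:billey}, Property~\ref{billey2}). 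Thus $\prod_{\alpha_i\in K}p_{s_i}=C'\sum_{K'\subsetneq J,\ |J|=|K'|+1}c_{j,K'}^J\,p_{v_J}$.

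It remains to pin down which $J$ contribute. The subsets $J$ with $K'\subsetneq J$, $|J|=|K'|+1$ are exactly $J=K'\cup\{\alpha_k\}$ for $\alpha_k\notin K'$; I claim $c_{j,K'}^J=0$ unless $\alpha_k$ is adjacent to $K'$ in the Dynkin diagram. By Equation~\eqref{eq:structure constants}, $c_{j,K'}^J$ vanishes when $p_{s_j}(w_J)=p_{s_j}(w_{K'})$, i.e. when adjoining $\alpha_k$ does not create any new occurrence of $s_j$ in a reduced word for $w_J$ that contributes a new $\mathbf{r}$-term — and when $\alpha_k$ is not adjacent to $K'$, the parabolic $W_J$ is a direct product $W_{K'}\times\langle s_k\rangle$, $w_J=w_{K'}s_k$, and the extra letter $s_k\ne s_j$ contributes nothing new to $\sigma_{s_j}$. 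So the only surviving $J$ are the connected ones (and possibly some disconnected ones with $\alpha_k$ adjacent to a component — but connectedness of $K$ and the Coxeter structure will force $J$ connected in the relevant range). Among these, I would show that after collecting terms only $J=K$ survives and any genuinely disconnected $J$ is killed, by again comparing with a further application of the inductive hypothesis; alternatively, and more cleanly, one can avoid identifying the coefficients entirely: it suffices to observe that $\prod_{\alpha_i\in K}p_{s_i}$ is a homogeneous element of degree $|K|$ in the module, the set $\{p_{v_J}:|J|=|K|\}$ is linearly independent (Theorem~\ref{thm:basis}), and then show the product localizes to zero at every $w_J$ with $J\ne K$, $|J|=|K|$ — which follows because for such $J$ either $K\not\subseteq J$ (so some $p_{s_i}(w_J)=0$, killing the product by the localization-is-a-ring-map principle) or $J$ is a disconnected set of the same size not equal to $K$, handled by the factorization of the product over the components of $J$ together with the connected case applied to each component (a component strictly smaller than the matching component of $K$ gives a vanishing localization). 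Hence $\prod_{\alpha_i\in K}p_{s_i}=C\cdot p_{v_K}$ for the constant $C=p_{v_K}$-localization ratio, namely $C=\big(\prod_{\alpha_i\in K}p_{s_i}\big)(w_K)\big/p_{v_K}(w_K)$, which is nonzero by Lemma~\ref{non-zero} and the fact that each $p_{s_i}(w_K)\ne 0$ (since $\alpha_i\in K$ implies $s_i\le w_K$, so Billey gives a nonzero value).

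The main obstacle I anticipate is the bookkeeping in the last paragraph: showing cleanly that the product localizes to zero at every $w_J$ with $|J|=|K|$, $J\ne K$. The $K\not\subseteq J$ case is easy, but the "disconnected $J$ of the same cardinality" case requires the multiplicativity of both the product $\prod p_{s_i}$ and the classes $p_{v_J}$ across connected components, which in turn rests on the precise $\mathrm{Root}_K$ reindexing of Definition~\ref{def: connected part 1} and on the commutation of reflections lying in distinct Dynkin components. Once that structural compatibility is in hand, the rest is Billey's formula plus the integral-domain argument already used repeatedly above.
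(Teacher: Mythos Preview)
Your inductive approach via Monk's formula is exactly the paper's strategy, but you overcomplicate the key vanishing step and introduce unnecessary detours. The paper does not reduce to connected $K$, nor does it invoke adjacency or a localization argument: it simply picks any ordering $K=\{\alpha_{a_1},\ldots,\alpha_{a_m}\}$, sets $K_i=\{\alpha_{a_1},\ldots,\alpha_{a_i}\}$, and observes that in the Monk expansion of $p_{s_{a_{i+1}}}\cdot p_{v_{K_i}}$ the \emph{only} surviving term is $J=K_{i+1}$. The reason is immediate from the structure-constant formula: any $J$ with $K_i\subsetneq J$ and $|J|=|K_i|+1$ has the form $J=K_i\cup\{\alpha_k\}$; if $J\neq K_{i+1}$ then $\alpha_k\neq\alpha_{a_{i+1}}$, so $\alpha_{a_{i+1}}\notin J$, hence $s_{a_{i+1}}\not\leq w_J$ and $p_{s_{a_{i+1}}}(w_J)=0$ by Billey. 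Since you already noted $p_{s_{a_{i+1}}}(w_{K_i})=0$, Equation~\eqref{eq:structure constants} gives $c_{a_{i+1},K_i}^J=0$. Thus $p_{s_{a_{i+1}}}\cdot p_{v_{K_i}}=c_{a_{i+1},K_i}^{K_{i+1}}\cdot p_{v_{K_{i+1}}}$ on the nose, and iterating yields $C=\prod_i c_{a_i,K_{i-1}}^{K_i}$.

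Your adjacency discussion is a red herring: whether $\alpha_k$ is adjacent to $K'$ is irrelevant, because the vanishing of $p_{s_j}(w_J)$ depends only on whether $\alpha_j\in J$, not on the Dynkin geometry of $J$. This single observation dissolves your ``main obstacle'' entirely --- there is no disconnected-$J$ case to chase, and no need to first reduce to connected $K$. Your fallback localization argument is salvageable but incomplete as stated: you only treat $|J|=|K|$, whereas a priori the module expansion of $\prod_{\alpha_i\in K}p_{s_i}$ could involve $p_{v_J}$ with $|J|<|K|$ carrying $\mathbb{C}[t]$-coefficients of positive degree; you would need an upper-triangularity induction on $|J|$ to rule those out. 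The direct Monk argument sidesteps this altogether.
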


\begin{proof}
If $|K|=m$ let $K=\lbrace \alpha_{a_1}, \alpha_{a_2}, \cdots \alpha_{a_{m}} \rbrace$. Define a sequence of nested subsets $\emptyset = K_0 \subsetneq K_1 \subsetneq K_2 \subsetneq \cdots \subsetneq K_{m}=K$  by $$K_i=\lbrace \alpha_{a_1}, \alpha_{a_2}, \cdots \alpha_{a_{i}} \rbrace.$$
\noindent
From Equation~\eqref{eq:pre-monk} in the proof of Monk's formula for Peterson varieties
$$p_{s_{a_{i+1}}}\cdot p_{v_{K_i}}=  c_{a_{i+1},K_i}^{K_{i+1}}\cdot p_{v_{K_i}} + \sum \limits_{\substack{K_i\subsetneq J \subseteq \Delta \\ |J|=|K_i|+1}} c_{a_{i+1},K_i}^J \cdot p_{v_J}.$$
\noindent
Theorem~\ref{thm:Monk formula} says $c_{a_{i+1},K_i}^{K_{i+1}}= p_{s_{a_{i+1}}}(w_{K_i})$. Because $\alpha_{a_{i+1}} \not \in K_i$ the coefficient $p_{s_{a_{i+1}}}(w_{K_i})=0$ . If $\alpha_{a_{i+1}} \not\in J$ the term $ p_{s_{a_{i+1}}}(w_{J})=0$. Thus if  $J\neq K_{i+1}$ the coefficient $c_{a_{i+1},K_i}^J =0$. Now Equation~\eqref{eq:pre-monk} reduces to
$$p_{s_{a_{i+1}}}\cdot p_{v_{K_i}}=  c_{a_{i+1},K_i}^{K_{i+1}} \cdot p_{v_{K+1}}.$$
\noindent
Solving for $p_{v_{K+1}}$ gives 
$$\frac{p_{s_{a_{i+1}}}\cdot p_{v_{K_i}} }{ c_{a_{i+1},K_i}^{K_{i+1}}} = p_{v_{K{i+1}}}.$$
By induction on $i$ we see 
$$p_{v_K}=\frac{\prod \limits_{i=1}^{|K|} p_{s_{a_{i}}}} {\prod \limits_{i=1}^{|K|} c_{a_{i},K_{i-1}}^{K_{i}}}.$$
This gives that $$C=\prod \limits_{i=1}^{|K|} c_{a_{i+1},K_i}^{K_{i+1}}.$$
   \end{proof}
\noindent
To find this constant $C$ explicitly we consider the simplest non-trivial Peterson Schubert classes, those that are connected.

\begin{Definition}
From Definition \ref{def: connected part 1}, a subset of simple roots $K\subseteq \Delta$ is called { connected} if the induced Dynkin diagram of $K$ is a connected subgraph of the Dynkin diagram of $\Delta$.  The class $p_{v_K}$ is called {connected} whenever $K$ is connected.
\end{Definition}

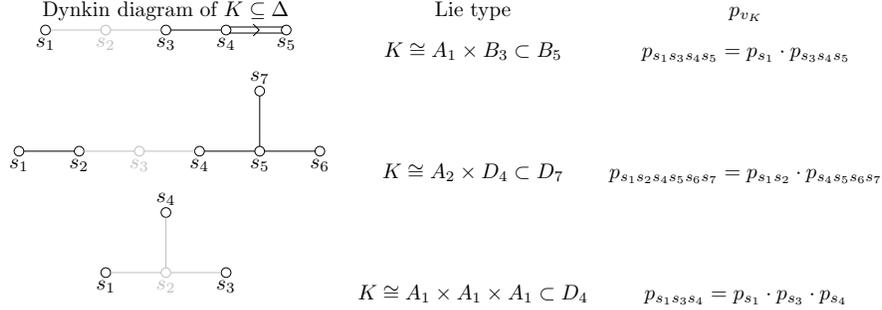
\begin{figure}

\begin{center}
\scalebox{.8}{\begin{tabular}{ccc}
  Dynkin diagram of $K \subseteq \Delta$ & Lie type  & $p_{v_K}$\\ 
\begin{tikzpicture}[scale=1.0]
\draw[] (1,0) circle [radius=0.08];
\node [below] at (1,0)  {$s_1$};
\draw[lightgray] (2,0) circle [radius=0.08];
\node [below]  at (2,0)  {${\color{lightgray}s_2}$};
\draw [lightgray] (1.08,0) -- (1.92,0);
\draw[] (3,0) circle [radius=0.08];
\node [below] at (3,0)  {$s_{3}$};
\draw [lightgray] (2.08,0) -- (2.92,0);
\draw[] (4,0) circle [radius=0.08];
\node [below] at (4,0)  {$s_4$};
\draw (3.08,0) -- (3.92,0);
\draw[] (5,0) circle [radius=0.08];
\node [below] at (5,0)  {$s_{5}$};
\draw (4.05,.05) -- (4.95,.05);
\draw (4.05,-.05) -- (4.95,-.05);
\draw (4.45,.1) -- (4.55,0) -- (4.45,-.1);
\end{tikzpicture}
& $K\cong A_1 \times B_3 \subset B_5$& $p_{s_1s_3s_4s_5}=p_{s_1} \cdot p_{s_3s_4s_5}$
\\\
 
\begin{tikzpicture}[scale=1.0]
\draw[] (0,0) circle [radius=0.08];
\node [below] at (0,0)  {$s_1$};
\draw[] (1,0) circle [radius=0.08];
\node [below] at (1,0)  {$s_2$};
\draw (0.08,0) -- (.92,0);
\draw[lightgray] (2,0) circle [radius=0.08];
\node [below] at (2,0)  {${\color{lightgray}s_3}$};
\draw [lightgray] (1.08,0) -- (1.92,0);
\draw[] (3,0) circle [radius=0.08];
\node [below] at (3,0)  {$s_{4}$};
\draw [lightgray] (2.08,0) -- (2.92,0);
\draw[] (4,0) circle [radius=0.08];
\node [below] at (4,0)  {$s_{5}$};
\draw (3.08,0) -- (3.92,0);
\draw[] (5,0) circle [radius=0.08];
\node [below] at (5,0)  {$s_{6}$};
\draw (4.08,0) -- (4.92,0);
\draw[] (4,1) circle [radius=0.08];
\node [above] at (4,1)  {$s_7$};
\draw (4,.08) -- (4,.92);
\end{tikzpicture}
& $K\cong A_2 \times D_4 \subset D_7$& $p_{s_1s_2s_4s_5s_6s_7}=p_{s_1s_2} \cdot p_{s_4s_5s_6s_7}$
\\

\begin{tikzpicture}[scale=1.0]
\draw[] (3,0) circle [radius=0.08];
\node [below] at (3,0)  {$s_{1}$};
\draw[lightgray] (4,0) circle [radius=0.08];
\node [below] at (4,0)  {${\color{lightgray}s_{2}}$};
\draw [lightgray](3.08,0) -- (3.92,0);
\draw[] (5,0) circle [radius=0.08];
\node [below] at (5,0)  {$s_{3}$};
\draw [lightgray](4.08,0) -- (4.92,0);
\draw[] (4,1) circle [radius=0.08];
\node [above] at (4,1)  {$s_4$};
\draw [lightgray] (4,.08) -- (4,.92);
\end{tikzpicture} 
& $K\cong A_1 \times A_1 \times A_1 \subset D_4$
& $p_{s_1s_3s_4}=p_{s_1}\cdot p_{s_3} \cdot p_{s_4}$
\\
\end{tabular}}\\
\caption{The subsystem $K\subseteq \Delta$ is drawn as a marked set of vertices in the Dynkin diagram. The associated Peterson Schubert class is given at right.} \label{fig:root subsystems}
\end{center}
\end{figure}

\noindent
Figure \ref{fig:root subsystems} gives examples of sets $K\subseteq \Delta$ which are not connected.  The induced Dynkin diagrams also give the Lie type of the subsystem $K$.  Every Peterson Schubert class can be expressed in terms of connected classes.

\begin{Theorem}\label{thm:disconnected}
If $J,K\subset \Delta$  are each connected subsets such that $J\cup K$ is disconnected then \begin{equation} \label{eq: disconnected classes} p_{v_{J \cup K}}=p_{v_J}\cdot p_{v_K}. \end{equation}
\end{Theorem}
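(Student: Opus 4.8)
The plan is to reduce to a single localization computation by combining a triangularity argument with the injectivity of Theorem~\ref{thm:spanning}. First dispose of the trivial case: if $J=\emptyset$ (or $K=\emptyset$) then $v_\emptyset=e$ and $p_{v_\emptyset}=1$, so the identity is immediate; assume $J,K\neq\emptyset$. Since $J\cup K$ is disconnected while $J$ and $K$ are each connected, the sets $J$ and $K$ are disjoint and no simple root of $J$ is adjacent to one of $K$. Consequently $W_{J\cup K}=W_J\times W_K$, the longest element factors as $w_{J\cup K}=w_Jw_K$ with $\ell(w_{J\cup K})=\ell(w_J)+\ell(w_K)$, and by Definition~\ref{def: connected part 1} the Coxeter element is $v_{J\cup K}=v_Jv_K$ (with $J$ and $K$ its maximal connected subsets).

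Next expand in the module basis of Theorem~\ref{thm:basis}: $p_{v_J}\cdot p_{v_K}=\sum_{L\subseteq\Delta}c_L\,p_{v_L}$ with $c_L\in\mathbb{C}[t]$. Each $p_{v_L}$ is homogeneous of degree $|L|$ (it is $\pi_1$ applied to a Schubert-class polynomial that is homogeneous by Property~\ref{billey1}), so arguing on polynomial degrees exactly as in the proof of Lemma~\ref{lemma J>K+1} gives $c_L=0$ whenever $|L|>|J|+|K|=|J\cup K|$. I would then show by induction on $|M|$ that $c_M=0$ for every $M$ with $J\cup K\not\subseteq M$: evaluating the expansion at the fixed point $w_M$, Lemma~\ref{J subset K} kills every term $p_{v_L}(w_M)$ with $L\not\subseteq M$, the inductive hypothesis kills the terms with $L\subsetneq M$, the degree bound kills $|L|>|J\cup K|$, and the left-hand side $p_{v_J}(w_M)p_{v_K}(w_M)$ vanishes because $J\not\subseteq M$ or $K\not\subseteq M$ (Lemma~\ref{J subset K} again); what survives is $0=c_M\,p_{v_M}(w_M)$, and $p_{v_M}(w_M)\neq 0$ by Lemma~\ref{non-zero}, so $c_M=0$. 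Since the unique $L\subseteq\Delta$ with $|L|\leq|J\cup K|$ and $J\cup K\subseteq L$ is $L=J\cup K$ itself, this leaves $p_{v_J}\cdot p_{v_K}=c_{J\cup K}\,p_{v_{J\cup K}}$ with $c_{J\cup K}$ a complex constant.

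To identify $c_{J\cup K}$, evaluate at the fixed point $w_{J\cup K}$; Lemma~\ref{non-zero} lets us divide, giving $c_{J\cup K}=p_{v_J}(w_{J\cup K})\,p_{v_K}(w_{J\cup K})/p_{v_{J\cup K}}(w_{J\cup K})$. The remaining point, and the only real work, is a factorization of Billey's formula across the disjoint non-adjacent supports, using the reduced word for $w_{J\cup K}$ obtained by concatenating a reduced word for $w_J$ with one for $w_K$. A reduced subword equal to $v_J$ can use only letters from the $w_J$-block, and for positions inside that block the values $\mathbf{r}(\mathbf{j},w_{J\cup K})$ coincide with those computed inside $w_J$, so $\sigma_{v_J}(w_{J\cup K})=\sigma_{v_J}(w_J)$; symmetrically $\sigma_{v_K}(w_{J\cup K})=\sigma_{v_K}(w_K)$, where one uses that $W_J$ fixes the roots of $W_K$ because $J$ and $K$ are non-adjacent. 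Finally a reduced subword equal to $v_Jv_K$ splits uniquely as a reduced subword for $v_J$ in the $w_J$-block followed by one for $v_K$ in the $w_K$-block (the $J$-letters precede the $K$-letters and $W_J\cap W_K=\{e\}$), and the product of $\mathbf{r}$-values factors accordingly, so $\sigma_{v_{J\cup K}}(w_{J\cup K})=\sigma_{v_J}(w_J)\cdot\sigma_{v_K}(w_K)$. Applying the ring homomorphism $\pi_1$ and cancelling the nonzero factors $p_{v_J}(w_J)=\pi_1(\sigma_{v_J}(w_J))$ and $p_{v_K}(w_K)=\pi_1(\sigma_{v_K}(w_K))$ yields $c_{J\cup K}=1$, proving the theorem. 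The main obstacle is the Billey-formula factorization of this last paragraph; the rest is bookkeeping with Lemmas~\ref{J subset K} and~\ref{non-zero} and the degree argument of Lemma~\ref{lemma J>K+1}.
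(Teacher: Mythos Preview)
Your argument is correct, but it takes a genuinely different route from the paper. The paper proves the identity by verifying it at \emph{every} fixed point $w_L$: for $L\supseteq J\cup K$ it fixes a reduced word $\tilde w_L$, sets up a bijection between reduced subwords of $\tilde w_L$ equal to $v_{J\cup K}$ and pairs $(b_J,b_K)$ of reduced subwords equal to $v_J$ and $v_K$ (using that $J$ and $K$ are disjoint and disconnected, so the index sets $I(b_J),I(b_K)$ are forced to partition $I(b)$), and then observes that Billey's formula on each side expands to the same double sum of products of $\mathbf{r}(\mathbf{j},\tilde w_L)$'s. No basis expansion or induction is used.

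Your approach instead leverages the module-basis theorem and the triangularity lemmas to reduce everything to a \emph{single} localization at $w_{J\cup K}$, where the reduced word is the block concatenation $w_Jw_K$ and the Billey factorization is especially transparent (you only need that $W_J$ fixes the $K$-roots, so the $\mathbf r$-values in the $w_K$-block are unaffected by the preceding $w_J$-block). The trade-off: the paper's proof is self-contained and does not invoke Theorem~\ref{thm:basis} or the Monk-style induction, but must carry out the subword bijection inside an arbitrary $\tilde w_L$ that may interleave $J$-, $K$-, and other letters; your proof reuses machinery already in place (Lemmas~\ref{J subset K} and~\ref{non-zero} and the degree argument of Lemma~\ref{lemma J>K+1}) to collapse the problem to one clean computation. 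Both are valid; yours is more in the spirit of the structure-constant arguments elsewhere in the paper, while the paper's is a direct pointwise identity.
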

\begin{proof}
We show that equality holds when Equation \eqref{eq: disconnected classes} is evaluated at any $S^1$-fixed point $w_L$.  If $L$ does not contain $J\cup K$ we can suppose without loss of generality that $J\not\subseteq L$.  Then both $p_{v_{J \cup K}}(w_L)$ and $p_{v_J}(w_L)$ are zero.\\
\\
Now suppose $J \cup K \subseteq L$. Even though $J\cup K$ is disconnected, $L$ may be connected or disconnected.  Fix a reduced word for $w_L$
$$\tilde{w}_L=s_{a_1}s_{a_2}\cdots s_{a_{\ell(w_L)}}$$
and let $b \prec \tilde{w}_L$ mean that $b$ is a subword of $\tilde{w}_L$.  The indexing set of the subword $b$ is the set $I(b) \subseteq \lbrace 1,2, \ldots \ell(w_L) \rbrace$ such that $$b=s_{a_{j_1}}s_{a_{j_2}} \cdots s_{a{j_{|I(b)|}}}\text{ for } j_1<j_2<\cdots <j_{|I(b)|} \text{ with each }j_k \in I(b).$$
The subwords of $\tilde{w}_L$ are in bijection with the subsets of $\lbrace 1,2, \ldots \ell(w_L) \rbrace$.  Given two subwords $b_1,b_2 \prec \tilde{w}_L$ we define their union $b_1 \cup b_2$ to be the subword
$$b_1 \cup b_2 =s_{a_{j_1}}s_{a_{j_2}} \cdots s_{a{j_{|I(b_1)\cup I(b_2)|}}}$$
 for  $j_1<j_2<\cdots <j_{|I(b_1)\cup I(b_2)|}$ with each $j_k \in I(b_1)\cup I(b_2).$  Let $b_J, b_K \prec \tilde{w}_L$ be reduced words for $v_J$ and $v_K$ respectively.  Since $J$ and $K$ are disconnected $I(b_J) \cap I(b_K) = \emptyset$ and $v_J$ commutes entirely with $v_K$ \cite{Bjorner and Brenti}. Thus $b_J \cup b_K$ is a reduced word for $v_J\cdot v_K=v_{J\cup K}$.\\
\\
Conversely let $b\prec \tilde{w}_L$ be a reduced word for $v_{J\cup K}$.  We can partition $I(b)$ into $$I(b)_J=\lbrace j_k\in I(b) : \alpha_{a_{j_k}}\in J \rbrace \text{ and }I(b)_K=\lbrace j_k\in I(b) : \alpha_{a_{j_k}}\in K \rbrace.$$
\noindent
Since $v_J \leq v_{J\cup K}$ and $b$ is a reduced word for $v_{J\cup K}$, some reduced word for $v_J$ must be a subword of $b$.  Let $b_J \prec b$ be that subword.  Since no reflections associated with $K$ are in $b_J$, $I(b_J)\subseteq I(b)_J$.  A parallel argument shows that there is some subword $b_K\prec b$ equal to $v_K$ and that $I(b_K)\subseteq I(b)_K$.\\
\\
By our previous argument $b_J\cup b_K$ is a reduced word for $v_J\cdot v_K =v_{J\cup K}$.  So $\ell(b_J\cup b_K)=\ell(v_{J\cup K})$ which equals $\ell(b)$. Thus $I(b_J)= I(b)_J$ and $I(b_K)= I(b)_K$ and $b=b_J \cup b_K$.\\
\\
A subword $b\prec \tilde{w}_L$ is a reduced word for $v_{J\cup K}$ if and only if $b=b_J \cup b_K$ for subwords $b_J, b_K \prec \tilde{w}_L$ reduced words for $v_J$ and $v_K$.  Billey's formula in Equation~\eqref{eq:Billeys} is a sum over such subwords.  We use it to rewrite the left- and right-hand sides of Equation~\eqref{eq: disconnected classes}. The left-hand side becomes:
\begin{equation}\label{eq:lhs of disconnect}
p_{v_{J\cup K}}(w_L)= \sum \limits_{\substack{b \prec \tilde{w}_L \\ b= v_{J\cup K} \\ |I(b)|=|J \cup K|}} \left( \prod \limits_{j\in I(b)}  \mathbf{r}(\mathbf{j},\tilde{w}_L) \right).
\end{equation}
\noindent
Similarly the right-hand side becomes $p_{v_{J}}(w_L)\cdot p_{v_{K}}(w_L)=$
\begin{equation}\label{eq:rhs of disconnect}
\left[ \sum \limits_{\substack{b \prec \tilde{w}_L \\ b= v_{J} \\ |I(b)|=|J |}} \left( \prod \limits_{j\in I(b)}  \mathbf{r}(\mathbf{j},\tilde{w}_L) \right) \right] \cdot \left[\sum \limits_{\substack{b \prec \tilde{w}_L \\ b= v_{ K} \\ |I(b)|=|K|}} \left( \prod \limits_{j\in I(b)}  \mathbf{r}(\mathbf{j},\tilde{w}_L) \right)\right].
\end{equation}
\noindent
Both Equations \eqref{eq:lhs of disconnect} and \eqref{eq:rhs of disconnect} expand to the expression
$$
\sum \limits_{\substack{b_J \prec \tilde{w}_L \\ b_J= v_{J} \\ |I(b_J)|=|J |}} \sum \limits_{\substack{b_K \prec \tilde{w}_L \\ b_K= v_{K} \\ |I(b_K)|=|K|}} \left[  \left( \prod \limits_{j\in I(b_J)}  \mathbf{r}(\mathbf{j},\tilde{w}_L) \right) \cdot \left( \prod \limits_{j\in I(b_K)}  \mathbf{r}(\mathbf{j},\tilde{w}_L) \right)\right].
$$
   \end{proof}

\noindent
Any subset $K\subset \Delta$ gives rise to a Peterson Schubert class that is the product of connected Peterson Schubert classes. Understanding the connected Peterson Schubert classes thus gives full information on all Peterson Schubert classes.  The next theorem gives Giambelli's formula explicitly for connected Peterson Schubert classes. 

\begin{Theorem} \label{thm:by type}
If $K\subseteq \Delta$ is a connected root subsystem  of type $A_n,B
_n,C_n,F_4,$ or $G_2$ then $$|K|!\cdot p_{v_K} = \prod \limits_{\alpha_i \in K} p_{s_i}.$$
\noindent
If $K$ is a connected root subsystem of type $D_n$ then $$\frac{|K|!}{2}\cdot p_{v_K} = \prod \limits_{\alpha_i \in K} p_{s_i}.$$
If $K$ is a connected root subsystem of type $E_n$ then $$\frac{|K|!}{3}\cdot p_{v_K} = \prod \limits_{\alpha_i \in K} p_{s_i}.$$
\end{Theorem}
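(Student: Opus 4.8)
The plan is to collapse the identity, in each irreducible type, to a single rational-number computation by localizing at one fixed point, and then to evaluate the two sides of that computation with Billey's formula. By Theorem~\ref{thm:disconnected} it is enough to treat connected $K$, since a disconnected $K$ factors as a product of connected ones and $\prod_{\alpha_i \in K} p_{s_i}$ factors accordingly. So fix a connected $K$. By Lemma~\ref{lemma one product} there is a constant $C = C_K \in \mathbb{C}[t]$ with $C_K \cdot p_{v_K} = \prod_{\alpha_i \in K} p_{s_i}$, and $C_K$ is forced since $H_{S^1}^*(Pet)$ is free over $\mathbb{C}[t]$ and $p_{v_K} \neq 0$. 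To identify $C_K$, evaluate at the $S^1$-fixed point $w_K$: localization is a ring map, so
$$C_K \cdot p_{v_K}(w_K) = \prod_{\alpha_i \in K} p_{s_i}(w_K),$$
and $p_{v_K}(w_K) \neq 0$ by Lemma~\ref{non-zero}, hence
$$C_K = \frac{\prod_{\alpha_i \in K} p_{s_i}(w_K)}{p_{v_K}(w_K)}.$$
By Lemma~\ref{lemma ct} the numerator is a degree-$|K|$ monomial in $t$ with positive integer coefficient and the denominator is a degree-$|K|$ monomial with positive integer coefficient, so $C_K$ is a positive rational number and all powers of $t$ cancel. Since $K$ connected of type $X$ makes $W_K$ an irreducible Weyl group of type $X$ with longest element $w_K$, and Billey's formula for $w_K$ only involves the roots and reflections of $\Phi_K$, the value of $C_K$ depends only on the type $X$ and the rank; so it suffices to compute, inside each irreducible type, the product $\prod_i p_{s_i}(w_0)$ and the single evaluation $p_{v_\Delta}(w_0)$, where $w_0$ is the longest element and $v_\Delta$ is the Coxeter element selected by the labelling of Figure~\ref{fig:root systems}.

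For the numerator, each $p_{s_i}(w_0) = \pi_1(\sigma_{s_i}(w_0))$ is accessible from Billey's formula~\eqref{eq:Billeys} applied to the single letter $s_i$; equivalently $\sigma_{s_i}(w_0) = \varpi_i - w_0\varpi_i = \varpi_i + \varpi_{i^*}$ for the fundamental weights of $\Phi_K$, so that $p_{s_i}(w_0) = \big(\mathrm{ht}(\varpi_i) + \mathrm{ht}(\varpi_{i^*})\big)\,t$ with $\mathrm{ht}$ the sum of simple-root coordinates. This produces $\prod_i p_{s_i}(w_0)$ in closed form, type by type (in $A_n$, for instance, it is $(n!)^2 t^n$). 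The harder half is $p_{v_\Delta}(w_0) = \pi_1(\sigma_{v_\Delta}(w_0))$: Billey's formula writes $\sigma_{v_\Delta}(w_0)$ as a sum, over every reduced-subword occurrence of the Coxeter element $v_\Delta$ in a fixed reduced word for $w_0$, of the product of the $|\Delta|$ intervening positive roots $\mathbf{r}(\mathbf{j},w_0)$. For the classical types $A_n, B_n, C_n, D_n$ I would organize and evaluate this sum using the modified excited Young diagrams of Section~6, which are built precisely to encode Billey's formula at the longest element; for $F_4$, $G_2$ and $E_6, E_7, E_8$ I would read off the value from the explicit Sage computations of Section~7.

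Dividing then gives $C_K$ in each case: $|K|!$ in types $A_n, B_n, C_n, F_4, G_2$, $|K|!/2$ in type $D_n$, and $|K|!/3$ in type $E_n$. For the three infinite families this amounts to a single identity between the closed-form rank-$n$ expressions for $\prod_i p_{s_i}(w_0)$ and for $p_{v_\Delta}(w_0)$, which can be checked by induction on the rank; for the five exceptional types it is a finite verification. I expect the main obstacle to be the evaluation of $p_{v_\Delta}(w_0)$ in the classical types --- counting, with the correct root weights, the occurrences of a Coxeter element as a reduced subword of the longest element --- which is exactly the bookkeeping the excited-Young-diagram machinery of Section~6 is built to handle, and from which the extra factor of $2$ in type $D$ and $3$ in type $E$ must emerge.
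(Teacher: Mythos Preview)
Your plan is correct and largely parallels the paper's proof in Section~7: both reduce to a single rational-number identity by localizing Lemma~\ref{lemma one product} at $w_K$, and both evaluate $p_{v_K}(w_K)$ via the modified excited Young diagrams of Section~6 (where, in each classical type, there is a unique $v_K$-excitation of $\lambda_{w_K}$, so this value falls out immediately), and both handle $E_n$, $F_4$, $G_2$ by finite computation.

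The one genuine divergence is in how the numerator is obtained for types $B_n$, $C_n$, $D_n$ (and $F_4$). You propose to compute the full product $\prod_i p_{s_i}(w_K)$ directly, using the closed form $\sigma_{s_i}(w_0)=\varpi_i+\varpi_{i^*}$ to read off each factor from the heights of fundamental weights. The paper instead bootstraps from the type~$A$ case via Monk's rule: by Lemma~\ref{lemma:reflection s_n}, removing the last node gives a type~$A_{n-1}$ subsystem $J$, and the constant $C_K$ equals $(n-1)!\cdot c_{n,J}^K$ with $c_{n,J}^K=p_{s_n}(w_K)\cdot p_{v_J}(w_K)/p_{v_K}(w_K)$. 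So the paper computes only one $p_{s_i}(w_K)$ (for $i=n$) but must in exchange evaluate the additional sum $p_{v_J}(w_K)$ over $v_J$-excitations of $\lambda_{w_K}$ and close it with a binomial identity. Your route avoids that extra summation at the cost of needing the heights of all fundamental weights in each type; the paper's route avoids looking up those heights at the cost of one combinatorial sum. Either way the verification in each infinite family collapses to a single closed-form identity in $n$, and the hard step --- evaluating $p_{v_K}(w_K)$ --- is the same in both.
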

\noindent
Our proof of this theorem, given in Section 7, is combinatorial  and treats each Lie type as its own case.   Theorem \ref{thm:by type} can be restated uniformly.
\begin{Theorem}\label{thm:reduced words}
If $K\subseteq \Delta$ is a connected root subsystem of any Lie type and $|\mathcal{R}(v_K)|$ is the number of reduced words for $v_K$ then
$$\frac{|K|!}{|\mathcal{R}(v_K)|}\cdot p_{v_K} = \prod \limits_{\alpha_i \in K} p_{s_i}.$$
\end{Theorem}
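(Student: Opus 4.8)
The plan is to derive Theorem \ref{thm:reduced words} directly from Theorem \ref{thm:by type}: the two statements are equivalent once one knows that the number of reduced words $|\mathcal{R}(v_K)|$ equals $1$ for types $A_n$, $B_n$, $C_n$, $F_4$, $G_2$, equals $2$ for type $D_n$, and equals $3$ for type $E_n$. So the whole task is the combinatorial count of reduced expressions of the Coxeter element $v_K$.

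First I would reduce the count to counting linear extensions of a poset. Because $v_K$ is a Coxeter element of $W_K$, every reduced word for $v_K$ uses each generator $s_i$, $\alpha_i \in K$, exactly once. By Matsumoto's (Tits') theorem (see \cite{Bjorner and Brenti}) any two reduced words for $v_K$ are related by braid moves; but a braid move on $m_{ij}\geq 3$ letters requires a factor $s_is_js_i$, which cannot occur in a word with no repeated letter, so only commutation moves $s_is_j\leftrightarrow s_js_i$ for $s_i,s_j$ non-adjacent in the Dynkin diagram of $K$ can be applied. Hence $\mathcal{R}(v_K)$ is a single commutation class, and $|\mathcal{R}(v_K)|$ equals the number of linear extensions of the poset $P_K$ on the vertex set $K$ whose relations are generated by $\alpha_i <_{P_K} \alpha_j$ whenever $\{i,j\}$ is an edge of the Dynkin diagram and $s_i$ precedes $s_j$ in $v_K$.

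Next I would read $P_K$ off Definition \ref{def: connected part 1} and Figure \ref{fig:root systems} and count, case by case. For types $A_n$, $B_n$, $C_n$, $F_4$, $G_2$ the Dynkin diagram is a path and $v_K$ lists the simple reflections in path order, so $P_K$ is a chain and $|\mathcal{R}(v_K)| = 1$. For type $D_n$, $P_K$ is the chain $\alpha_1 <\cdots< \alpha_{n-2}$ together with two incomparable elements $\alpha_{n-1},\alpha_n$ each covering $\alpha_{n-2}$, which has exactly $2$ linear extensions. For type $E_n$, $P_K$ has two minimal elements $\alpha_1$ and $\alpha_2$, with $\alpha_3$ covering $\alpha_1$, $\alpha_4$ covering both $\alpha_2$ and $\alpha_3$, followed by the chain $\alpha_4<\alpha_5<\cdots<\alpha_n$; interleaving the chain $\alpha_1<\alpha_3$ with the singleton $\alpha_2$ below $\alpha_4$ gives $\binom{3}{1}=3$ linear extensions. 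In every case $|\mathcal{R}(v_K)|$ equals the denominator appearing in Theorem \ref{thm:by type}, so $\frac{|K|!}{|\mathcal{R}(v_K)|}\cdot p_{v_K} = \prod_{\alpha_i\in K} p_{s_i}$, as claimed.

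There is no serious obstacle here — once the reduction to heaps/linear extensions is in place the rest is bookkeeping. The one point requiring care is the labelling: $v_K$ is defined through $\mathrm{Root}_K$, which re-indexes $K$ as an abstract root system of its own Lie type (as in Example \ref{ex: type E vk}), so I would take the orientation defining $P_K$ with respect to that intrinsic ordering rather than the ambient one. Equivalently, since both $|\mathcal{R}(v_K)|$ and $|K|!$ are intrinsic to $W_K$ and $v_K$, the identity is insensitive to how $K$ is embedded in $\Delta$.
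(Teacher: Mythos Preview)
Your proposal is correct and follows essentially the same approach as the paper: both reduce to Theorem \ref{thm:by type} and then count $|\mathcal{R}(v_K)|$ type by type, using Matsumoto's theorem and the fact that no letter repeats in $v_K$ to restrict to commutation moves. Your framing via linear extensions of the heap poset $P_K$ is a slightly more systematic packaging of the same enumeration the paper carries out by listing the reduced words directly.
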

\noindent
The uniformity across Lie types suggests that a uniform proof exists. Such a proof might shed light on the topology of these varieties. 
\begin{proof}
Given Theorem \ref{thm:by type} it is sufficient to show that $|\mathcal{R}(v_K)|=1$ if $K$ is type $A,B,C,F,$ or $G$, that $|\mathcal{R}(v_K)|=2$ for type $D$ and that $|\mathcal{R}(v_K)|=3$ for type $E$.  Given one reduced word any other reduced word can be obtained by a series of braid moves and commutations \cite{Bjorner and Brenti}.  If $K$ is type $A,B,C,F,$ or $G$ then $s_i$ and $s_{i+1}$ do not commute for any $i$.  Therefore $s_1s_2\cdots s_{n-1}s_n$ is the only reduced word for $v_K$.  \\
\\
If $K$ is of type $D$ then $s_i$ and $s_{i+1}$ commute if and only if $i=n-1$.  Also $s_{n-2}$ and $s_n$ do not commute.  The only reduced words for $v_K$ are $s_1s_2\cdots s_{n-2}s_{n-1}s_n$ and  $s_1s_2\cdots s_{n-2}s_{n}s_{n-1}$ so $|\mathcal{R}(v_K)|=2$. \\
\\
 If $K$ is type $E_n$ then we start with the word $v_K=s_1s_2s_3s_4\cdots s_n$ with the labels given as in Figure \ref{fig:root systems}.  The reflection $s_2$ commutes with $s_1$ and $s_3$ but not $s_4$. The reflection $s_3$ does not commute with $s_1$. When $i>2, s_i$ and $s_{i+1}$ do not commute.  Thus $v_K$ has exactly $3$ reduced words: $s_1s_2s_3s_4\cdots s_n$ and $s_1s_3s_2s_4\cdots s_n$ and $s_2s_1s_3s_4\cdots s_n$.
   \end{proof}
\noindent
We can now give Giambelli's formula explicitly for all Peterson Schubert classes.
\begin{Corollary}
If $K\subseteq \Delta$ and $K=K_1 \times K_2 \times \cdots K_j$ where each $K_\ell$ is maximally connected then 
\begin{equation}
C_K \cdot p_{v_K} = \prod \limits_{i\in K} p_{s_i}
\end{equation}
\noindent
where $C_K=\prod \limits_{\ell=1}^j \frac{|K_\ell|!}{|\mathcal{R}(v_{K_\ell})|}$.
\end{Corollary}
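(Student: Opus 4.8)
The plan is to combine the two structural results already established: Theorem~\ref{thm:disconnected}, which handles disconnected unions, and Theorem~\ref{thm:reduced words}, which handles each maximally connected piece. First I would write $K = K_1 \times K_2 \times \cdots \times K_j$ with each $K_\ell$ maximally connected, so that $v_K = v_{K_1} v_{K_2} \cdots v_{K_j}$ by Definition~\ref{def: connected part 1}. Since each $K_\ell$ is maximally connected inside $K$, any union $K_{\ell_1} \cup K_{\ell_2}$ of distinct pieces is disconnected, and more generally $K_1 \cup \cdots \cup K_{\ell}$ is disconnected from $K_{\ell+1}$ for each $\ell < j$. Applying Theorem~\ref{thm:disconnected} repeatedly (by induction on $j$) gives the factorization of Peterson Schubert classes
\begin{equation*}
p_{v_K} = p_{v_{K_1}} \cdot p_{v_{K_2}} \cdots p_{v_{K_j}}.
\end{equation*}

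Next I would invoke Theorem~\ref{thm:reduced words} on each connected factor: for each $\ell$ we have $\frac{|K_\ell|!}{|\mathcal{R}(v_{K_\ell})|} \cdot p_{v_{K_\ell}} = \prod_{i \in K_\ell} p_{s_i}$. Multiplying these $j$ equations together and using the factorization of $p_{v_K}$ from the previous step yields
\begin{equation*}
\left( \prod_{\ell=1}^{j} \frac{|K_\ell|!}{|\mathcal{R}(v_{K_\ell})|} \right) \cdot p_{v_K} = \prod_{\ell=1}^{j} \left( \prod_{i \in K_\ell} p_{s_i} \right) = \prod_{i \in K} p_{s_i},
\end{equation*}
where the last equality holds because the $K_\ell$ partition $K$. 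Setting $C_K = \prod_{\ell=1}^{j} \frac{|K_\ell|!}{|\mathcal{R}(v_{K_\ell})|}$ completes the argument.

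There is essentially no obstacle here: the corollary is a bookkeeping assembly of two theorems already in hand. The only point requiring a sentence of care is justifying the iterated use of Theorem~\ref{thm:disconnected}, since that theorem as stated takes two connected sets whose union is disconnected, whereas here I apply it with the first argument being a union of several connected pieces (hence possibly disconnected itself). I would note that the proof of Theorem~\ref{thm:disconnected} in fact only uses that $J$ and $K$ are disconnected from each other (so that $v_J$ and $v_K$ commute and their reduced-word index sets are disjoint), not that each is individually connected; alternatively, one can phrase the induction so that at each stage one multiplies a single new connected factor $p_{v_{K_{\ell+1}}}$ onto the already-formed product and applies the theorem with $J = K_{\ell+1}$ and $K = K_1 \cup \cdots \cup K_\ell$, invoking the mild generalization just mentioned. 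Everything else is immediate from the multiplicativity of Billey's formula as spelled out in the proof of Theorem~\ref{thm:disconnected}.
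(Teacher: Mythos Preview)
Your proposal is correct and follows exactly the paper's approach: the paper's own proof is a single sentence stating that the corollary follows immediately from Theorems~\ref{thm:disconnected} and~\ref{thm:reduced words}. Your write-up simply spells out the bookkeeping in more detail, including the mild observation about iterating Theorem~\ref{thm:disconnected}, which the paper leaves implicit.
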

\begin{proof}
The corollary follows immediately from Theorems \ref{thm:disconnected} and \ref{thm:reduced words}.
   \end{proof}

\section{Modified Excited Young Diagrams}
To compute the constant term of Giambelli's formula we need to evaluate the Peterson Schubert class $p_{v}$ at $S^1$-fixed points of $Pet$.  Our main tool, modified excited Young diagrams, is related both to the work by Woo-Yong~\cite[Section 3]{Pipe Dreams}, and Ikeda-Naruse \cite{EYD}. \\
\\
We only need to evaluate at fixed points $w_K$ where $K\subseteq \Delta$ is a connected root system. For the remainder of this paper, $K$ will be a connected root system identified by Lie type. \\
\\
The first step is to write $w_K$ explicitly as a skew diagram $\lambda_{w_K}$.  The $i^{th}$ column from the left represents the simple reflection $s_i$.  Reading left-to-right and top-to-bottom gives a reduced word for $w_K$. Figure \ref{fig:the word w_K} gives several examples.\\
\\ 
Our goal is to compute $p_v(w_K)$. To start we use Equations \eqref{eq:Billeys} and \eqref{eq:map diagram} to rewrite it as
$$\begin{array}{rl}
p_v(w_K)=\pi_1 (\sigma_v(w_K))& =\pi_1 \left( \sum \limits_{\substack{\text{reduced words} \\ v=s_{b_{j_1}}s_{b_{j_2}} \cdots s_{b_{j_{\ell(v)}}} }} \left( \prod \limits_{i=1}^{\ell(v)}  \mathbf{r}(\mathbf{j_i},w_K) \right)\right)\\
\\
&=\sum \limits_{\substack{\text{reduced words} \\ v=s_{b_{j_1}}s_{b_{j_2}} \cdots s_{b_{j_{\ell(v)}}} }} \left( \prod \limits_{i=1}^{\ell(v)} \pi_1 ( \mathbf{r}(\mathbf{j_i},w_K)) \right).\end{array}$$
\noindent
To help with this computation we build two labeled diagrams. The first is called $\lambda_{w_K}$ and has boxes labeled by simple reflections. The second diagram is called $\lambda_{p(w_K)}$ has boxes labeled by integers. We label the $\mathbf{i}^{th}$ box of $\lambda_{w_K}$ with the value $\frac{1}{t}\cdot \pi_1(\mathbf{r}(\mathbf{i},w_K))$.  The term $\pi_1(\mathbf{r}(\mathbf{i},w_K))$ is a degree-one monomial in $\mathbb{C}[t]$ whose coefficient is the number of simple roots, counting multiple occurrences, that appear in the root $\mathbf{r}(\mathbf{i},w_K)$. This number is the height in the root poset of the root $\mathbf{r}(\mathbf{i},w_K)$. Thus the labels are positive integers. We give an example in type $B_3$:\\
\ytableausetup
{mathmode, boxsize=1em}
$$\begin{array}{ccc}
\lambda_{w_K} & &\lambda_{p(w_K)} \\
\begin{ytableau}
\none & \none & {s_3} \\
\none & s_2 & s_3\\
 {s_1} &  {s_2} &  {s_3} \\
 {s_1} & {s_2} \\
 {s_1} 
\end{ytableau}
&
\hspace{1cm}
&
\begin{ytableau}
\none & \none &  1\\
\none & 3 &2\\
4& 5 & 3 \\
 1&2 \\
 1
\end{ytableau}
\end{array}.$$

\noindent A $v$-excitation $\mu$ of $\lambda_{w_K}$ is any collection of $\ell(v)$ boxes in the labeled diagram that, when read left-to-right and top-to-bottom, give a reduced word for $v$.  For example if $K$ is type $B_3$ there are three $s_1s_2$-excitations of $\lambda_{w_K}$:
$$\begin{array}{ccc}
\begin{ytableau}
\none & \none & {s_3} \\
\none & s_2 & s_3\\
*(gray) {s_1} & *(gray) {s_2} &  {s_3} \\
 {s_1} & {s_2} \\
 {s_1} 
\end{ytableau}
&\begin{ytableau}
\none & \none & {s_3} \\
\none & s_2 & s_3\\
 *(gray){s_1} &  {s_2} &  {s_3} \\
 {s_1} & *(gray){s_2} \\
 {s_1} 
\end{ytableau}
&\begin{ytableau}
\none & \none & {s_3} \\
\none & s_2 & s_3\\
 {s_1} &  {s_2} &  {s_3} \\
 *(gray){s_1} & *(gray){s_2} \\
 {s_1} 
\end{ytableau}
\end{array}$$

\noindent
If $\mu$ is a $v$-excitation of $\lambda_{w_K}$ then $M_p(\mu)$ is the product of the entries in the boxes of $\lambda_{p(w_K)}$ filled by $\mu$.
$$
\begin{ytableau}
\none & \none & {s_3} \\
\none & s_2 & s_3\\
*(gray) {s_1} & *(gray) {s_2} &  {s_3} \\
 {s_1} & {s_2} \\
 {s_1} 
\end{ytableau}
\hspace{1cm}
\begin{ytableau}
\none & \none & 1 \\
\none & 3 & 2\\
*(gray) 4 & *(gray) 5&  3 \\
1 & 2 \\
1 
\end{ytableau}
$$
\noindent For this $s_1s_2$-excitation $\mu$ of $w_{B_3}$ the coefficient is $M_p(\mu)=(4)(5)=20. $
Now $p_v(w_K)$ can be computed by this diagramatic construction: 
\begin{equation}
\label{eq:diagramatic billey}
 p_v(w_K)=\sum \limits_{\substack{\mu \text{ a $v$-excitation}\\ \text{ of } w_K}} M_p (\mu)\cdot t^{\ell(v)}.
\end{equation}
\noindent
\begin{figure}
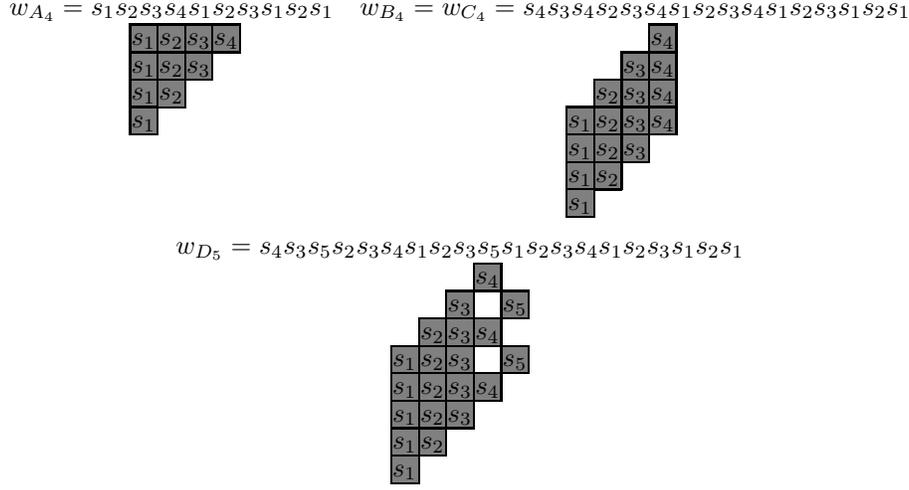
 
$$
\ytableausetup
{mathmode, boxsize=1em}
\begin{array}{cc}
w_{A_4}=s_1s_2s_3s_4s_1s_2s_3s_1s_2s_1
&
w_{B_4}=w_{C_4}=s_4s_3s_4s_2s_3s_4s_1s_2s_3s_4s_1s_2s_3s_1s_2s_1\\
\begin{ytableau}
\none &*(gray)s_1 &*(gray) s_2 &*(gray)s_3  & *(gray) s_4\\
\none &*(gray)s_1 &*(gray) s_2 &*(gray)s_3  & \none \\
\none &*(gray)s_1 & *(gray)s_2 & \none & \none \\
\none & *(gray)s_1 & \none & \none & \none \\
\none &\none & \none & \none & \none \\
\end{ytableau}
&

\begin{ytableau}

\none &\none & \none &  *(gray)s_4 \\
\none &\none & *(gray) s_3& *(gray) s_4 \\
\none &*(gray) s_2& *(gray) s_3& *(gray)s_4  \\
 *(gray)s_1& *(gray)s_2&*(gray) s_3 &*(gray) s_4  \\
 *(gray)s_1& *(gray)s_2&*(gray) s_3 & \none & \none \\
 *(gray)s_1&*(gray) s_2& \none & \none & \none \\
 *(gray)s_1&\none & \none & \none & \none \\
\end{ytableau}
\end{array} $$
$$\begin{array}{c}
w_{D_5}=s_4s_3s_5s_2s_3s_4s_1s_2s_3s_5s_1s_2s_3s_4s_1s_2s_3s_1s_2s_1\\
\begin{ytableau}
\none &\none & \none & *(gray)s_4  \\
\none &\none & *(gray) s_3&  & *(gray) s_5\\
\none &*(gray) s_2&*(gray) s_3 & *(gray)s_4   \\
*(gray) s_1&*(gray)s_2 & *(gray)s_3 &  &*(gray)s_5  \\
 *(gray)s_1&*(gray)s_2 &*(gray) s_3 &*(gray) s_4   \\
 *(gray)s_1& *(gray)s_2& *(gray)s_3 &   \none \\
 *(gray)s_1& *(gray)s_2&   \none & \none \\
 *(gray)s_1& \none & \none & \none \\
\end{ytableau}
\end{array}$$
\caption{Skew diagrams representing the longest element in the classical Lie types.  We use the reduced words for $w_K$ given by Sage.} \label{fig:the word w_K}
\end{figure}
\section{Proof of the Giambelli's formula}
Theorems \ref{thm:by type} and \ref{thm:reduced words} gave two versions of the main theorem of this paper. Having used Theorem \ref{thm:by type} to prove Theorem \ref{thm:reduced words}, we now prove Theorem \ref{thm:by type} case by case according to Lie type.  This section first  gives a proof for type $A$ which will be used for the proofs of the other classical types.  Then we prove the theorem for the exceptional Lie types.  This involves computer-generated proofs for the $E$ series and explicit calculations for types $F_4$ and $G_2$. \\
\\
Throughout this section we assume that $K$ is a connected root system of the given Lie type.  
\subsection{Type A}
\begin{figure}[h]
\begin{tikzpicture}
\node [left] at (-.5,0)  {$A_n:$};
\draw[] (0,0) circle [radius=0.08];
\node [below] at (0,0)  {$s_1$};
\draw[] (1,0) circle [radius=0.08];
\node [below] at (1,0)  {$s_2$};
\draw (0.08,0) -- (.92,0);
\draw[] (2,0) circle [radius=0.08];
\node [below] at (2,0)  {$s_3$};
\draw (1.08,0) -- (1.92,0);
\draw[] (3,0) circle [radius=0.08];
\node [below] at (3,0)  {$s_{n-2}$};
\draw [dashed] (2.08,0) -- (2.92,0);
\draw[] (4,0) circle [radius=0.08];
\node [below] at (4,0)  {$s_{n-1}$};
\draw (3.08,0) -- (3.92,0);
\draw[] (5,0) circle [radius=0.08];
\node [below] at (5,0)  {$s_{n}$};
\draw (4.08,0) -- (4.92,0);
\end{tikzpicture}
\end{figure}

\noindent
While Giambelli's formula for type $A$ was fully addressed by Bayegan and Harada \cite{Giambelli}, we give a proof using modified excited Young diagrams.  The diagrams  $\lambda_{w_K}$ and $\lambda_{p(w_K)}$ are \\
\begin{center}
\scalebox{.8}{
\ytableausetup{boxsize=2em}
$\lambda_{w_K}=$
\begin{ytableau}
\none[y_n]& s_1 & s_2 & s_3 & \none[\dots]
& \scriptscriptstyle s_{n-2}  & \scriptscriptstyle s_{n-1} &  s_n \\
\none[\scriptstyle y_{n-1}]& s_1 & s_2 & s_3 & \none[\dots]
 & \scriptscriptstyle s_{n-2}  & \scriptscriptstyle s_{n-1}  \\
\none[\scriptstyle y_{n-2}]& s_1 & s_2 & s_3 & \none[\dots]
& \scriptscriptstyle s_{n-2}  \\
\none[\vdots]& \none[\vdots] & \none[\vdots]& \none[\vdots] & \none [\iddots] \\
\none[y_3]&  s_1 & s_2 & s_3 \\
\none[y_2]& s_1 & s_2 \\
\none[y_1]& s_1
\end{ytableau}
\hspace{1cm}
$\lambda_{p(w_K)}=$
\begin{ytableau}
1 & 2 & 3 & \none[\dots]
& \scriptscriptstyle{n-2} & \scriptscriptstyle{n-1} &  n \\
1 & 2 & 3 & \none[\dots]
&\scriptscriptstyle{n-2}& \scriptscriptstyle{n-1}\\
1 & 2 & 3 & \none[\dots]
& \scriptscriptstyle{n-2} \\
\none[\vdots] & \none[\vdots]& \none[\vdots] & \none [\iddots] \\
 1 & 2 & 3 \\
1 & 2 \\
1\\
\end{ytableau}}
\end{center}
\noindent
The labels of $\lambda_{p(w_K)}$  can be seen by considering the $i^{th}$ box of the $j^{th}$ row.  By calling the word made by the top row of $\lambda_{w_K}$ $y_n$ and the $j^{th}$ row $y_{n-j+1}$ we see that the root associated with that box is
\begin{align*}
y_ny_{n-1}\cdots y_{n-j+2}s_1s_2\cdots s_{i-1}(\alpha_i)&=y_ny_{n-1}\cdots y_{n-j+2}(\alpha_1+\cdots + \alpha_i)\\
&=y_ny_{n-1}\cdots y_{n-j+3}(\alpha_2+\cdots + \alpha_{i+1})\\
&\vdots \\
&=\alpha_{j}+\cdots +\alpha_{j+i-1}.
\end{align*}
\noindent 
so the label in that box of $\lambda_{p(w_K)}$ is $i$. We use the diagrams to prove the type $A$ case of Theorem \ref{thm:by type}.
\noindent
\begin{Proposition} Theorem \ref{thm:by type} holds when $K$ is a connected type $A$ root system.
\end{Proposition}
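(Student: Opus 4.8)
The plan is to pin down the constant $C$ from Lemma~\ref{lemma one product} by evaluating the identity $C\cdot p_{v_K}=\prod_{\alpha_i\in K}p_{s_i}$ at one well-chosen $S^1$-fixed point, namely $w_K$ itself. Since restriction to a fixed point is a ring homomorphism, evaluating both sides gives $C\cdot p_{v_K}(w_K)=\prod_{\alpha_i\in K}p_{s_i}(w_K)$; because $p_{v_K}(w_K)\neq 0$ by Lemma~\ref{non-zero} and $\mathbb{C}[t]$ is an integral domain, we may solve $C=\prod_{\alpha_i\in K}p_{s_i}(w_K)\,/\,p_{v_K}(w_K)$. So it suffices to compute those two evaluations, and for this I would use the modified excited Young diagrams $\lambda_{w_K}$ and $\lambda_{p(w_K)}$ pictured above, together with Equation~\eqref{eq:diagramatic billey}.

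First I would compute $p_{s_i}(w_K)$. An $s_i$-excitation of $\lambda_{w_K}$ is a single box labeled $s_i$; in the type $A_n$ staircase these are exactly the boxes of column $i$, and column $i$ meets the rows $y_i,y_{i+1},\dots,y_n$, so there are $n-i+1$ of them. Each such box carries the entry $i$ in $\lambda_{p(w_K)}$, as computed just above the statement. Hence $p_{s_i}(w_K)=i(n-i+1)\,t$, and multiplying over $i=1,\dots,n$ gives $\prod_{\alpha_i\in K}p_{s_i}(w_K)=t^{n}\prod_{i=1}^{n}i(n-i+1)=(n!)^2\,t^{n}$, since as $i$ runs from $1$ to $n$ the factor $n-i+1$ runs over $n,\dots,1$.

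Next I would compute $p_{v_K}(w_K)$ for $v_K=s_1s_2\cdots s_n$. The combinatorial point is that the only $v_K$-excitation of $\lambda_{w_K}$ is the top row $y_n$: the letter $s_n$ occurs only in column $n$, which has a box solely in the top row; since rows are read top-to-bottom and $s_n$ is the last letter of the (unique) reduced word $s_1\cdots s_n$, every earlier letter must also be chosen from the top row and to the left of column $n$, forcing the excitation to be all of row $y_n$. The corresponding entries of $\lambda_{p(w_K)}$ are $1,2,\dots,n$, so $p_{v_K}(w_K)=n!\,t^{n}$. Dividing, $C=(n!)^2 t^{n}\,/\,(n!\,t^{n})=n!=|K|!$, which is the assertion.

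The only genuinely non-routine ingredient here is the count of $v_K$-excitations, and in type $A$ it collapses to the trivial statement that there is exactly one; the substantive versions of this count are what gets deferred to the later subsections, where the staircase $\lambda_{w_K}$ for types $B$, $C$, $D$ is more intricate and is handled by reducing to the type $A$ computation, and where the exceptional types $E$, $F_4$, $G_2$ are treated by direct calculation.
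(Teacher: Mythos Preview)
Your proposal is correct and follows essentially the same approach as the paper: evaluate the identity of Lemma~\ref{lemma one product} at $w_K$, use the diagrams $\lambda_{w_K}$ and $\lambda_{p(w_K)}$ to compute $p_{s_i}(w_K)=i(n-i+1)t$ and $p_{v_K}(w_K)=n!\,t^n$, and solve for $C=n!$. Your justification for the uniqueness of the $v_K$-excitation (forcing everything into the top row because $s_n$ appears only there) is in fact more explicit than the paper's, which simply asserts uniqueness.
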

\begin{proof} Lemma ~\ref{lemma one product}  showed $C\cdot p_{v_K}=\prod \limits_{1 \leq i \leq n} p_{s_i}$. Evaluating this equation at the fixed point $w_K$ gives
\begin{equation}\label{eq:type a}
C\cdot p_{v_K}(w_K)=\prod \limits_{1 \leq i \leq n} p_{s_i}(w_K).
\end{equation}
There is only one filling $w_K$ with $v_K=s_1s_2\cdots s_n$, specifically
\begin{center}\scalebox{.9}{$\mu=$
\begin{ytableau}
*(gray) s_1 &*(gray) s_2 &*(gray) s_3 & \none[\dots]
&*(gray) \scriptscriptstyle s_{n-2} & *(gray)\scriptscriptstyle s_{n-1} & *(gray) s_n \\
s_1 & s_2 & s_3 & \none[\dots]
& \scriptscriptstyle s_{n-2} & \scriptscriptstyle s_{n-1} \\
s_1 & s_2 & s_3 & \none[\dots]
& \scriptscriptstyle s_{n-2} \\
\none[\vdots] & \none[\vdots]& \none[\vdots] & \none [\iddots] \\
 s_1 & s_2 & s_3 \\
s_1 & s_2 \\
s_1
\end{ytableau}}\end{center}
\noindent
Thus $p_{v_K}(w_K)=M(\mu)\cdot t^n=n!\cdot t^n$. We must also evaluate $p_{s_i}(w_K)$ for each $s_i \in K$.  From $\lambda_{p(w_K)}$ each $s_i$-excitation $\mu$ of $w_K$ has $M(\mu)=i$. From the diagram, there are $n-i+1$ such excitations. So
$$
 p_{s_i}(w_K)=\sum \limits_{\substack{\mu \text{ a $s_i$-excitation}\\  \text{ of } w_K}} M (\mu)\cdot t^{\ell(s_i)}=(n-i+1)i\cdot t.
$$ 
Solving Equation \eqref{eq:type a} for $C$ we obtain 
$$C\cdot n! \cdot t^n=\prod \limits_{1 \leq i \leq n}[(n-i+1)i\cdot t]=\prod \limits_{1 \leq i \leq n}(n-i+1) \cdot \prod \limits_{1 \leq i \leq n}(i\cdot t)=(n!)^2\cdot t^n$$
\noindent
which gives $C=n!=|K|!$ as desired.
   \end{proof}
\noindent
The type $A$ result greatly simplifies the proof for the other classical Lie types.
\begin{Lemma} \label{lemma:reflection s_n}
Let $K$ be a connected root system of type $B_n,C_n,$ or $D_n$.  Then $J=K\setminus \lbrace s_n \rbrace$ is a root subsystem of type $A_{n-1}$ and
\begin{equation}\label{eq:type b setup}
c^K_{n,J} \cdot (n-1)! \cdot p_{v_K}=\prod_{\alpha_i \in K} p_{s_i}
\end{equation} where $c^K_{n,J}=p_{s_n}(w_K)\cdot \frac{p_{v_J}(w_K)}{p_{v_K}(w_K)}.$
\end{Lemma}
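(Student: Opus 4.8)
The plan is to bootstrap from the type $A$ case of Theorem~\ref{thm:by type}, just established, together with Monk's formula (Theorem~\ref{thm:Monk formula}). First I would verify the combinatorial claim that $J=K\setminus\{s_n\}$ has type $A_{n-1}$: reading off Figure~\ref{fig:root systems}, deleting the node $s_n$ from $B_n$ or $C_n$ removes the unique multiple bond and leaves the plain chain $s_1-s_2-\cdots-s_{n-1}$, while deleting $s_n$ from $D_n$ removes one prong of the fork at $s_{n-2}$ and again leaves a plain chain on $n-1$ nodes. In every case the induced ordering on $J$ coincides with the standard type $A_{n-1}$ ordering, so $|J|=n-1$ and $v_J=s_1s_2\cdots s_{n-1}$, and the type $A$ case of Theorem~\ref{thm:by type} applies to give $\prod_{\alpha_i\in J}p_{s_i}=(n-1)!\cdot p_{v_J}$.

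Since $H_{S^1}^*(Pet)$ is commutative and $K=J\sqcup\{\alpha_n\}$, I would then write $\prod_{\alpha_i\in K}p_{s_i}=p_{s_n}\cdot\prod_{\alpha_i\in J}p_{s_i}=(n-1)!\cdot p_{s_n}\cdot p_{v_J}$, reducing everything to the single product $p_{s_n}\cdot p_{v_J}$. Monk's formula expands this as
\[ p_{s_n}\cdot p_{v_J}=p_{s_n}(w_J)\cdot p_{v_J}+\sum_{\substack{J\subseteq J'\subseteq\Delta\\ |J'|=|J|+1}}c_{n,J}^{J'}\cdot p_{v_{J'}}, \]
and the crux is that the right-hand side collapses to one term. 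Because $\alpha_n\notin J$, the reflection $s_n$ cannot occur in a reduced word for $w_J$, so $s_n\not\le w_J$ and $p_{s_n}(w_J)=0$ by Property~\ref{billey2} of Billey's formula (Proposition~\ref{prop:billey}); this kills the leading term. For any $J'$ appearing in the sum with $\alpha_n\notin J'$, the same reasoning gives $p_{s_n}(w_{J'})=0=p_{s_n}(w_J)$, so the coefficient formula of Theorem~\ref{thm:Monk formula} forces $c_{n,J}^{J'}=0$. Hence only $J'$ with $\alpha_n\in J'$ survive, and $J\subseteq J'$ together with $|J'|=|J|+1$ pins down $J'=J\cup\{\alpha_n\}=K$. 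Thus $p_{s_n}\cdot p_{v_J}=c_{n,J}^{K}\cdot p_{v_K}$, and substituting $p_{s_n}(w_J)=0$ into the coefficient formula yields $c_{n,J}^{K}=p_{s_n}(w_K)\cdot\frac{p_{v_J}(w_K)}{p_{v_K}(w_K)}$, which is well defined since $p_{v_K}(w_K)\neq 0$ by Lemma~\ref{non-zero}. Combining the displays gives $\prod_{\alpha_i\in K}p_{s_i}=(n-1)!\cdot c_{n,J}^{K}\cdot p_{v_K}$, which is exactly Equation~\eqref{eq:type b setup}.

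I do not anticipate a genuine obstacle: once the type $A$ case and Monk's formula are in hand, the argument is bookkeeping. The one step that warrants care is the Dynkin-diagram check that $J$ really is type $A_{n-1}$ with the inherited labeling in each of the three families --- in particular the $D_n$ case, where $s_n$ is the branch node rather than a chain endpoint --- since the whole reduction rests on applying the type $A$ identity to $v_J=s_1\cdots s_{n-1}$.
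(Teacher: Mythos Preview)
Your proposal is correct and follows essentially the same route as the paper: invoke the already-proven type~$A$ Giambelli for $J$ to get $(n-1)!\,p_{v_J}=\prod_{\alpha_i\in J}p_{s_i}$, then apply Monk's formula to $p_{s_n}\cdot p_{v_J}$ and observe that, since $\alpha_n\notin J$, both the diagonal term $p_{s_n}(w_J)$ and every coefficient $c_{n,J}^{J'}$ with $\alpha_n\notin J'$ vanish, leaving only $c_{n,J}^K\,p_{v_K}$. The paper's version is terser---it simply asserts $p_{s_n}\cdot p_{v_J}=c_{n,J}^K\,p_{v_K}$ by Theorem~\ref{thm:Monk formula} (the collapsing argument having already appeared in the proof of Lemma~\ref{lemma one product})---but your more explicit unpacking, including the Dynkin-diagram check, is entirely in the same spirit.
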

\begin{proof}
By the proof of Giambelli's formula for type $A$ and Theorem \ref{thm:Monk formula} respectively, \begin{equation}
(n-1)! \cdot p_{v_J}=\prod_{\alpha_i \in J} p_{s_i}
\hspace{1cm} \text{    
and} \hspace{1cm}
p_{s_n}\cdot p_{v_J}=c^K_{n,J} \cdot p_{v_K}.
\end{equation}
\noindent 
Combining these gives Equation \eqref{eq:type b setup}.  By Theorem \ref{thm:Monk formula} $$c^K_{n,J}=(p_{s_n}(w_K)-p_{s_n}(w_J))\cdot \frac{p_{v_J}(w_K)}{p_{v_K}(w_K)}.$$  By construction the root $\alpha_n$ is not in $J$ so $p_{s_n}(w_J)=0$ giving the desired result.
   \end{proof}

\noindent
Now we will prove Giambelli's formula for the other classical Lie types.

\subsection{Type B}
\begin{figure}[h]
\begin{tikzpicture}
\draw[] (0,-2) circle [radius=0.08];
\node [below] at (0,-2)  {$s_1$};
\draw[] (1,-2) circle [radius=0.08];
\node [below] at (1,-2)  {$s_2$};
\draw (0.08,-2) -- (.92,-2);
\draw[] (2,-2) circle [radius=0.08];
\node [below] at (2,-2)  {$s_3$};
\draw (1.08,-2) -- (1.92,-2);
\draw[] (3,-2) circle [radius=0.08];
\node [below] at (3,-2)  {$s_{n-2}$};
\draw [dashed] (2.08,-2) -- (2.92,-2);
\draw[] (4,-2) circle [radius=0.08];
\node [below] at (4,-2)  {$s_{n-1}$};
\draw (3.08,-2) -- (3.92,-2);
\draw[] (5,-2) circle [radius=0.08];
\node [below] at (5,-2)  {$s_{n}$};
\draw (4.05,-1.95) -- (4.95,-1.95);
\draw (4.05,-2.05) -- (4.95,-2.05);
\draw (4.45,-1.9) -- (4.55,-2) -- (4.45,-2.1);
\node at (-1,-2){$B_n$:};
\end{tikzpicture}
\end{figure}

\begin{Proposition}
Theorem \ref{thm:by type}  holds when $K$ is a connected type $B$ root system.
\end{Proposition}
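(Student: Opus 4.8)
The plan is to bootstrap off the type $A$ result via Lemma~\ref{lemma:reflection s_n}. Write $K=\{s_1,\dots,s_n\}$ for the given connected type $B_n$ subsystem and $J=K\setminus\{s_n\}$, a type $A_{n-1}$ subsystem. That lemma already packages the type $A_{n-1}$ computation into
$$c^K_{n,J}\cdot (n-1)!\cdot p_{v_K}=\prod_{\alpha_i\in K}p_{s_i},\qquad c^K_{n,J}=p_{s_n}(w_K)\cdot\frac{p_{v_J}(w_K)}{p_{v_K}(w_K)},$$
so it suffices to prove that the constant $c^K_{n,J}$ equals $n$; then the coefficient in Giambelli's formula is $n\cdot(n-1)!=n!=|K|!$, which is the assertion of the Proposition. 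Everything therefore reduces to evaluating the three polynomials $p_{s_n}(w_K)$, $p_{v_J}(w_K)$ and $p_{v_K}(w_K)$, where $w_K$ is the longest element of the type $B_n$ parabolic $W_K$.

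For this I would use the modified excited Young diagrams of Section~6. First I would record the diagrams $\lambda_{w_K}$ and $\lambda_{p(w_K)}$ for the Sage reduced word of $w_K$ shown in Figure~\ref{fig:the word w_K}, computing the integer labels $\tfrac{1}{t}\,\pi_1(\mathbf{r}(\mathbf{i},w_K))$, namely the heights of the roots $\mathbf{r}(\mathbf{i},w_K)$. The key structural observation is that $\lambda_{w_K}$ splits into a lower ``staircase'' subdiagram, on which $\lambda_{p(w_K)}$ carries exactly the labels $1,2,3,\dots$ of the type $A_n$ diagram, together with an upper ``wedge'' of boxes recording how the long simple roots $\alpha_1,\dots,\alpha_{n-1}$ interact with the short root $\alpha_n$ across the double bond, where heights jump by $2$. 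Then Equation~\eqref{eq:diagramatic billey} turns each of the three evaluations into a finite sum: $p_{s_n}(w_K)$ is $t$ times the sum of the labels in the $s_n$-boxes, while $p_{v_J}(w_K)$ and $p_{v_K}(w_K)$ are sums over $v_J$-excitations and $v_K$-excitations of the products of the covered labels. I would enumerate these excitations, separating the ones lying entirely inside the lower staircase, which reproduce the type $A$ count, from the ones that reach into the wedge.

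Finally, substituting the three values into $c^K_{n,J}=p_{s_n}(w_K)\cdot p_{v_J}(w_K)/p_{v_K}(w_K)$, the degree-one factor and the degree-$(-1)$ factor combine into a $t$-free constant, which I expect to collapse to exactly $n$ after cancellation; this gives $c^K_{n,J}(n-1)!=n!$ and proves the Proposition.

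I anticipate that the main obstacle is the combinatorial bookkeeping in the second step: reading off the type $B_n$ labels of $\lambda_{p(w_K)}$ correctly --- the short/long asymmetry makes this subtler than in type $A$ --- and enumerating the $v_K$-excitations, since $v_K$ is a Coxeter element of $W_K$ and can occur as a subword of $w_K$ in several distinct ways. The reason for optimism is that the type $A$ proposition already handles the bulk of the diagram, so the genuinely new work is confined to the boundary wedge and to verifying the single numerical identity $c^K_{n,J}=n$.
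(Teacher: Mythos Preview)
Your strategy is exactly the paper's: reduce via Lemma~\ref{lemma:reflection s_n} to proving $c^K_{n,J}=n$, then evaluate $p_{s_n}(w_K)$, $p_{v_J}(w_K)$, $p_{v_K}(w_K)$ with the modified excited Young diagrams of Section~6 and substitute.

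Two of the details you anticipate are off. First, there is exactly \emph{one} $v_K$-excitation of $w_K$, not several: the only place where $s_1,s_2,\ldots,s_n$ appear in order is the single row $x_1$, so $p_{v_K}(w_K)$ is a single product. Likewise all $v_J$-excitations are confined to the two adjacent rows $x_1$ and $y_{n-1}$, indexed by how many boxes lie in the top row; the enumeration is a one-parameter sum, not a two-region count. Second, the ``heights jump by $2$'' picture you describe is the type $C$ phenomenon, not type $B$: in $B_n$ the short root satisfies $s_{n-1}(\alpha_n)=\alpha_{n-1}+\alpha_n$, so the $s_n$-column of $\lambda_{p(w_K)}$ carries the consecutive labels $1,2,\ldots,n$ and $p_{s_n}(w_K)=\tfrac{n(n+1)}{2}t$. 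The doubling shows up instead in the row $x_1$, whose first $n-1$ labels are $n+1,n+2,\ldots,2n-1$. With these corrections the computation goes through and collapses, via the identity $\sum_{i=0}^{n-1}\frac{(n+i)!}{i!}=\frac{n\,(2n)!}{(n+1)!}$, to $c^K_{n,J}=n$.
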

\begin{proof}
Let $K$ be a connected type $B_n$ root system and $J\subset K= K\setminus \{ s_n \}$. By Lemma \ref{lemma:reflection s_n} showing that $c^K_{n,J}=n$ is sufficient to prove Giambelli's formula for type $B$.\\
\\
If $K$ is of type $B_n$ the diagram of the chosen reduced word for $w_K$ is given below.  Each row is labeled by the word of reflections in that row.  For example $x_2=s_2s_3\cdots s_{n-1}s_n$.\begin{center}
\scalebox{.8}{
\ytableausetup{boxsize=2em}
$$\begin{ytableau}
\none\\
\none\\
\none\\
\none\\
\none\\
\none\\
\none [\lambda_{w_K}=]&\none &\none \\
\end{ytableau}
\begin{ytableau}
\none [x_n] &\none&\none &\none &\none &\none & \none & \none & *(gray) s_n\\
\none [x_{n-1}] &\none&\none &\none &\none &\none & \none &   \scriptscriptstyle s_{n-1}&*(gray)  s_n  \\
\none [x_{n-2}] &\none&\none &\none & \none &\none &  \scriptscriptstyle s_{n-2}&  \scriptscriptstyle s_{n-1}& *(gray)  s_n \\
\none [\vdots] &\none &\none & \none &\none &\none[\iddots] & \none [\vdots] & \none [\vdots] & \none [\vdots] \\
\none [x_3] &\none &\none & \none &  s_3 & \none [\cdots] &   \scriptscriptstyle s_{n-2} &  \scriptscriptstyle s_{n-1}& *(gray)  s_n \\
\none [x_2] &\none&\none &  s_2&  s_3 & \none [\cdots] &   \scriptscriptstyle s_{n-2} &  \scriptscriptstyle s_{n-1} & *(gray)  s_n \\
\none [x_1] &\none& *(gray)  s_1& *(gray) s_2&*(gray)  s_3 & \none [\cdots] & *(gray)  \scriptscriptstyle s_{n-2} & *(gray)\scriptscriptstyle s_{n-1} &*(gray)s_n \\
\none [y_{n-1}] &\none & *(gray)  s_1& *(gray) s_2&*(gray)  s_3 & \none [\cdots] & *(gray)  \scriptscriptstyle s_{n-2}& *(gray) \scriptscriptstyle s_{n-1}  \\ 
\none [y_{n-2}] &\none &   s_1&  s_2&  s_3 & \none [\cdots] &   \scriptscriptstyle s_{n-2}   \\ 
\none [\vdots] &\none &\none [\vdots]  & \none [\vdots]  & \none [\vdots] &\none [\iddots] \\
\none [y_3] &\none & s_1& s_2& s_3 & \none & \none \\
\none [y_2] &\none & s_1& s_2& \none & \none & \none \\
\none [y_1] &\none & s_1&\none & \none & \none & \none \\
\end{ytableau}$$
\ytableausetup{boxsize=normal}
}
\end{center}
\noindent
To compute $c_{n,J}^K$ we need to compute $p_v(w_K)$ where $v$ is $s_n$, $s_1s_2\cdots s_{n-1}$, and $s_1s_2\cdots s_{n}$.  All of the $v$-excitations of $w_K$ for these words are contained in the shaded area of the $\lambda_{w_K}$ above.  So we only need the entries of $\lambda_{p(w_K)}$ in those shaded boxes.  Start with the box labeled $s_n$ in row $x_j$ of the diagram.  The reflections that come after do not act on the root, so we look at $x_n x_{n-1} \cdots x_j$ and calculate the root as it moves through the diagram. A bullet, $\bullet$, marks the location of the root in the diagram at each step.  The initial root is below the first diagram and we follow how it changes.
\begin{center}
\scalebox{.8}{
\ytableausetup{boxsize=2em}
$\begin{matrix}
\begin{ytableau}
\none &\none &\none & \none & \none &  s_n\\
\none &\none &\none & \none &   \scriptscriptstyle s_{n-1}&  s_n  \\
\none & \none &\none &\none[\iddots]  & \none [\vdots] & \none [\vdots] \\
\none & \none &  \scriptscriptstyle s_{j+1} & \none [\cdots] &    \scriptscriptstyle s_{n-1}&   s_n \\
\none &  s_j&  \scriptscriptstyle s_{j+1} & \none [\cdots] &    \scriptscriptstyle s_{n-1} & \bullet \\
\end{ytableau}
& \rightarrow &
\begin{ytableau}
\none &\none &\none & \none & \none &  s_n\\
 \none &\none &\none & \none &   \scriptscriptstyle s_{n-1}&  s_n  \\
\none & \none &\none &\none[\iddots]  & \none [\vdots] & \none [\vdots] \\
\none & \none &  \scriptscriptstyle s_{j+1} & \none [\cdots] &    \scriptscriptstyle s_{n-1}&   s_n \\
\none &  \bullet &   & \none [\cdots] &     &  \\
\end{ytableau}\\
\alpha_n &  & \substack{ \\ s_js_{j+1}\cdots s_{n-1}(\alpha_n) \\ =\alpha_j+\alpha_{j+1}+\cdots + \alpha_{n-1} +\alpha_n} \\
\\
& \swarrow & \\
\\
\begin{ytableau}
\none &\none &\none & \none & \none &  s_n\\
\none &\none &\none & \none &   \scriptscriptstyle s_{n-1}&  s_n  \\
\none & \none &\none &\none[\iddots]  & \none [\vdots] & \none [\vdots] \\
\none & \none &  \scriptscriptstyle s_{j+1} & \none [\cdots] &    \scriptscriptstyle s_{n-1}&   \bullet \\
\none &   &   & \none [\cdots] &     &  \\
\end{ytableau}
& \rightarrow &
\begin{ytableau}
\none &\none &\none & \none & \none &  s_n\\
\none &\none &\none & \none &   \scriptscriptstyle s_{n-1}&  s_n  \\
\none & \none &\none &\none[\iddots]  & \none [\vdots] & \none [\vdots] \\
\none & \none &  \bullet & \none [\cdots] &    &    \\
\none &   &   & \none [\cdots] &     &  \\
\end{ytableau}
\\
 \substack{\\ s_n( \alpha_j+\alpha_{j+1}+\cdots + \alpha_{n-1} + \alpha_n)\\ = \alpha_j+\alpha_{j+1}+\cdots + \alpha_{n-1} + \alpha_n}& & \substack{ \alpha_{j}+ \alpha_{j+1} +\cdots + \alpha_{n-1} + \alpha_n}\\
\end{matrix}
$
\ytableausetup{boxsize=normal}}
\end{center}
By the time the bullet gets to the position in the lower left, the root is $\alpha_j+\cdots \alpha_n$ which is invariant under all simple reflections except $s_j$ and $s_{j-1}$.  Neither of those reflections act on the bullet as it continues through the diagram.  Thus the label on the box $s_n$ in row $x_j$ of $\lambda_{p(w_K)}$ is $n-j+1$.\\
\\
We can start the bullet in any box of the diagram.  Suppose that the $h^{th}$ simple reflection of $w_k$ is the $i^{th}$ box in row $y_{n-1}$. Then $\mathbf{r}(\mathbf{h},w_K)$ will be 
$$x_n\cdots x_1 s_1s_2 \cdots s_{i-1}s_{i-1} (\alpha_i)= x_n\cdots x_1  \left(\sum \limits_{m=1}^i \alpha_m \right).$$
\noindent If $i<n-1$ then applying the row $x_1$ to $\sum \limits_{m=1}^i \alpha_m $ gives  $\sum \limits_{m=1}^i \alpha_{m+1} $. Every subsequent $x_j$ again increases each index by one until applying $x_{n-i-1}$ gives  $\sum \limits_{m=n-i-1}^{n-1} \alpha_m $.  If $i=1$ then we are done. Otherwise the action of $s_n$ on this sum results in $\left[ \sum \limits_{m=n-i-1}^{n-1} \alpha_m \right]+2\alpha_n$. each subsequent reflection in $x_{n-i}$ results in the double appearance of another root.
$$x_{n-i} \left( \sum \limits_{m=n-i-1}^{n-1} \alpha_m \right)= \alpha_{n-i-1}+ 2\sum \limits_{m=n-i}^{n} \alpha_m$$
 This root is invariant under the action of every reflection in $x_{n-i+1}$ until the last one, $s_{n-i+1}$ where
$$s_{n-i+1}\left( \alpha_{n-i-1}+ 2\sum \limits_{m=n-i}^{n} \alpha_m\right)=\alpha_{n-i-1}+ \alpha_{n-i}+ 2\sum \limits_{m=n-i+1}^{n} \alpha_m$$
Subsequently 
$$x_{n-i+1}\left(\alpha_{n-i-1}+ \alpha_{n-i}+ 2\sum \limits_{m=n-i+1}^{n} \alpha_m\right) =\alpha_{n-i-1}+ \alpha_{n-i}+\alpha_{n-i+1}+ 2\sum \limits_{m=n-i+2}^{n} \alpha_m$$
and this process continues until the last reflection,
$$x_n\left(\alpha_{n-i-1}+ \alpha_{n-i}+\cdots +\alpha_{n-1}+ 2 \alpha_n \right)=  \sum \limits_{m=1}^{i} \alpha_{n-m}.$$
Thus the entry in the corresponding box of $\lambda_{p(w_K)}=\frac{1}{t}\pi \left(\sum \limits_{m=1}^{i} \alpha_{n-m} \right)=i$.\\
\\
Let the $h^{th}$ simple reflection of $w_K$ to be the $i^{th}$ box of row $x_1$ for some $i\neq n$. The root $\mathbf{r}(\mathbf{h},w_K)$ is 
$$\begin{array}{rl}
x_nx_{n-1}\cdots x_{2} s_{1}s_{2} \cdots s_{i-1}(\alpha_i)&=x_nx_{n-1}\cdots x_{2} \left(\sum \limits_{m=1}^i \alpha_m \right)\\
&=\alpha_1+\alpha_2+\cdots +\alpha_{n-1}+2 \alpha_n + \sum \limits_{m=2}^{i}\alpha_{n-m+1}.
\end{array}$$
Thus the entry in the corresponding box of $\lambda_{p(w_K)}$ is
$n+i $.  Now we can label the relevant boxes of $\lambda_{p(w_K)}$:
\ytableausetup
{mathmode, boxsize=1.7em}
$$\begin{ytableau}
\none\\
\none\\
\none\\
\none [\lambda_{p(w_K)}= ]&\none&\none
\end{ytableau}
\begin{ytableau}
\none [x_n] &\none&\none &\none &\none &\none & \none & \none &  1\\
\none [x_{n-1}] &\none&\none &\none &\none &\none & \none &    &  2  \\
\none [x_{n-2}] &\none&\none &\none & \none &\none & & &   3 \\
\none[\vdots] &\none&\none & \none &\none &\none [\iddots] & \none [\vdots] & \none [\vdots] & \none [\vdots] \\
\none [x_3] &\none&\none & \none & & \none [\cdots] & & &\scriptstyle n-2\\
\none [x_2] &\none&\none & & & \none [\cdots] &  & & \scriptstyle n-1\\
 \none [x_1] &\none& \scriptstyle n+1 & \scriptstyle n+2& \scriptstyle n+3 & \none [\cdots] &  \scriptscriptstyle 2n-2 &  \scriptscriptstyle 2n-1 & \scriptstyle n \\
\none [y_{n-1}] &\none&  1&  2& 3 & \none [\cdots] & \scriptstyle {n-2} & \scriptstyle {n-1}  \\ 
\end{ytableau}$$
With this labeling established, we can see that $p_{s_n}(w_K)=\frac{n(n+1)}{2}t$. We also observe that there is only one $v_K$-excitation of $w_K$ since $v_K$ contains, in order, the reflections $s_1$ through $s_n$ which appear in that order in row $x_1$ and in no other subword of $w_K$.  So $$p_{v_K}(w_K)=(n+1)(n+2) \cdots  (2n-2)(2n-1)n\cdot t^n=\frac{(2n-1)!}{(n-1)!}t^n.$$
The last piece is to calculate $p_{v_J}(w_K)$.  All subwords of $w_K$ that are reduced words for $v_{J}$ are entirely contained within the $x_1y_{n-1}$ subword of $w_K$.  We look at the excited Young diagrams in just those two rows of $\lambda_{p(w_K)}$. A $v_J$-excitation $\mu$ of these two rows is determined by how many boxes it takes from row $x_1$.  The excitation $\mu$ that uses $i$ boxes from row $x_1$  looks like:
\begin{center}
\scalebox{.85}{
\ytableausetup{boxsize=2.5em}
$\begin{ytableau}
*(lightgray)   \scriptstyle n+1 & *(lightgray) \scriptstyle n+2& *(lightgray) \scriptstyle n+3 &  \none [\cdots] & *(lightgray) \scriptscriptstyle n+i& \scriptscriptstyle n+i+1& \none [\cdots] &  \scriptscriptstyle 2n-2 &  \scriptscriptstyle 2n-1 & \scriptstyle n \\
\scriptstyle 1& \scriptstyle 2&\scriptstyle 3 & \none [\cdots]  & \scriptstyle  i &*(lightgray) \scriptstyle  i+1 & \none [\cdots] & *(lightgray) \scriptstyle {n-2} &*(lightgray) \scriptstyle {n-1}  \\ 
\end{ytableau}$
\ytableausetup{boxsize=normal}
}
\end{center}
This $v_J$-excitation contributes coefficient $M(\mu)=\frac{(n+i)!}{n\cdot i!}$ to $p_{v_J}(w_K)$.  Since we are restricted to  $0\leq i\leq n-1$, 
$$p_{v_J}(w_K)=\sum \limits_{i=0}^{n-1} \frac{(n+i)!}{n\cdot i!} t^{n-1}.$$
Putting all of the pieces together we have
$$\begin{array}{rl}
c_{n,J}^{K}&=\frac{p_{s_n}(w_K)\cdot p_{v_{J}}(w_K)}{p_{v_K}(w_K)}\\
\\
&=\frac{\frac{n(n+1)}{2}t}{\frac{(2n-1)!}{(n-1)!}t^n} \cdot \sum \limits_{i=0}^{n-1} \frac{(n+i)!}{n\cdot i!}t^{n-1}\\
\\
&=\frac{(n+1)!}{2n \cdot (2n-1)!} \cdot \sum \limits_{i=0}^{n-1} \frac{(n+i)!}{i!}.
\end{array}$$
By combinatorial identity, the sum can be rewritten and
$$c_{n,J}^K=\frac{(n+1)!}{2n \cdot (2n-1)!} \cdot \frac{n\cdot (2n)!}{(n+1)!}=n.$$
   \end{proof}

\subsection{Type C}
\begin{figure}[h]
\begin{tikzpicture}
\draw[] (0,-4) circle [radius=0.08];
\node [below] at (0,-4)  {$s_1$};
\draw[] (1,-4) circle [radius=0.08];
\node [below] at (1,-4)  {$s_2$};
\draw (0.08,-4) -- (.92,-4);
\draw[] (2,-4) circle [radius=0.08];
\node [below] at (2,-4)  {$s_3$};
\draw (1.08,-4) -- (1.92,-4);
\draw[] (3,-4) circle [radius=0.08];
\node [below] at (3,-4)  {$s_{n-2}$};
\draw [dashed] (2.08,-4) -- (2.92,-4);
\draw[] (4,-4) circle [radius=0.08];
\node [below] at (4,-4)  {$s_{n-1}$};
\draw (3.08,-4) -- (3.92,-4);
\draw[] (5,-4) circle [radius=0.08];
\node [below] at (5,-4)  {$s_{n}$};
\draw (4.05,-3.95) -- (4.95,-3.95);
\draw (4.05,-4.05) -- (4.95,-4.05);
\draw (4.55,-3.9) -- (4.45,-4) -- (4.55,-4.1);
\node at (-1,-4){$C_n$:};
\end{tikzpicture}
\end{figure}
\begin{Proposition}
Theorem \ref{thm:by type} holds when $K$ is a connected type $C$ root system.
\end{Proposition}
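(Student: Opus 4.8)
The plan is to mirror the type $B$ argument essentially verbatim, exploiting the fact that the Weyl groups of types $B_n$ and $C_n$ coincide. In particular $w_{C_n}=w_{B_n}$ as elements of $W$, so we may reuse the same reduced word and the same skew diagram $\lambda_{w_K}$ (see Figure \ref{fig:the word w_K}, where $w_{B_4}=w_{C_4}$), and the combinatorics of the excitations is unchanged. By Lemma \ref{lemma:reflection s_n}, the set $J=K\setminus\{s_n\}$ is a root subsystem of type $A_{n-1}$, and it suffices to show that $c^K_{n,J}=p_{s_n}(w_K)\cdot\frac{p_{v_J}(w_K)}{p_{v_K}(w_K)}$ equals $n$.

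The one place where type $C$ genuinely differs from type $B$ is the labeled diagram $\lambda_{p(w_K)}$, because in type $C$ the simple root $\alpha_n$ is long rather than short: the relevant Cartan integers give $s_n(\alpha_{n-1})=\alpha_{n-1}+\alpha_n$ and $s_{n-1}(\alpha_n)=\alpha_n+2\alpha_{n-1}$, exactly the reverse of type $B$. So I would recompute the entries of $\lambda_{p(w_K)}$ by tracing each root $\mathbf{r}(\mathbf{h},w_K)$ through the rows $x_n,x_{n-1},\dots,x_1,y_{n-1}$ just as in the type $B$ proof, the only change being that the doubled coefficients now accumulate on the short simple roots $\alpha_i$ with $i<n$ instead of on $\alpha_n$. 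This yields the entries of $\lambda_{p(w_K)}$ in the shaded region of $\lambda_{w_K}$ where every $s_n$-, $v_J$-, and $v_K$-excitation is confined.

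With those entries in hand, I would read off the three evaluations exactly as in type $B$: $p_{s_n}(w_K)$ is $t$ times the sum of the $s_n$-box entries; $p_{v_K}(w_K)$ comes from the unique $v_K$-excitation (the reflections $s_1,\dots,s_n$ occur in order only along a single row); and $p_{v_J}(w_K)$ is a sum over $v_J$-excitations parametrized by how many boxes are taken from the top of the relevant two-row strip, each contributing a product of consecutive integers. Substituting into the formula for $c^K_{n,J}$ and collapsing the resulting sum with a combinatorial identity of the same shape as the one used in type $B$ should give $c^K_{n,J}=n$, whence $|K|!\cdot p_{v_K}=\prod_{\alpha_i\in K}p_{s_i}$ by Lemma \ref{lemma:reflection s_n}.

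The main obstacle is the bookkeeping in recomputing $\lambda_{p(w_K)}$: one must track precisely when, and on which short simple root, a passing root acquires a coefficient $2$, since mis-indexing here is easy and it is the sole substantive difference from the type $B$ computation. Informally the whole calculation is ``Langlands dual'' to the type $B$ one, but I would not try to deduce it from duality, since the height grading underlying $\lambda_{p(w_K)}$ is tied to a fixed choice of simple roots. Pinning down the exact combinatorial identity needed in the final simplification is a secondary and routine hurdle.
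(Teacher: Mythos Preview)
Your proposal is correct and follows essentially the same route as the paper: reduce via Lemma~\ref{lemma:reflection s_n} to showing $c^K_{n,J}=n$, reuse the type $B$ diagram $\lambda_{w_K}$ since $w_{C_n}=w_{B_n}$, recompute the labels of $\lambda_{p(w_K)}$ with the type $C$ Cartan data, and evaluate $p_{s_n}(w_K)$, $p_{v_J}(w_K)$, $p_{v_K}(w_K)$ from the same excitations. The only point worth flagging is that the final simplification is in fact \emph{cleaner} than in type $B$: the paper finds $p_{s_n}(w_K)=n^2t$, the sum for $p_{v_J}(w_K)$ closes to $\frac{(2n-1)!}{n!}t^{n-1}$, and $p_{v_K}(w_K)=\frac{(2n-1)!}{(n-1)!}t^n$, so $c^K_{n,J}=n^2/n=n$ with no extra identity beyond the same hockey-stick sum.
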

\begin{proof}
This proof mirrors the proof in type $B$. Let $K$ be a connected type $C_n$ root system and define $J\subset K$ to be $J=K\setminus \{ s_n \}$. By Lemma \ref{lemma:reflection s_n} showing that $c^K_{n,J}=n$ is sufficient to prove Giambelli's formula for type $C$.\\
\\
The longest word $w_K$ is the same for type $C_n$ as for type $B_n$, the only changes from type $B$ are the box labels of  $\lambda_{p(w_K)}$.  First we find the label for the box corresponding to reflection $s_n$ in row $x_j$.

\begin{center}
\scalebox{.8}{
\ytableausetup{boxsize=2em}
$\begin{matrix}
\begin{ytableau}
\none &\none &\none & \none & \none &  s_n\\
\none &\none &\none & \none &   \scriptscriptstyle s_{n-1}&  s_n  \\
\none & \none &\none &\none[\iddots]  & \none [\vdots] & \none [\vdots] \\
\none & \none &  \scriptscriptstyle s_{j+1} & \none [\cdots] &    \scriptscriptstyle s_{n-1}&   s_n \\
\none &  s_j&  \scriptscriptstyle s_{j+1} & \none [\cdots] &    \scriptscriptstyle s_{n-1} & \bullet \\
\end{ytableau}
& \rightarrow &
\begin{ytableau}
\none &\none &\none & \none & \none &  s_n\\
 \none &\none &\none & \none &   \scriptscriptstyle s_{n-1}&  s_n  \\
\none & \none &\none &\none[\iddots]  & \none [\vdots] & \none [\vdots] \\
\none & \none &  \scriptscriptstyle s_{j+1} & \none [\cdots] &    \scriptscriptstyle s_{n-1}&   s_n \\
\none &  \bullet &   & \none [\cdots] &     &  \\
\end{ytableau}
%&
%\rightarrow
%&
%\begin{ytableau}
%\none &\none &\none & \none & \none &  s_n\\
%\none &\none &\none & \none &   \scriptscriptstyle s_{n-1}&  s_n  \\
%\none & \none &\none &\none[\iddots]  & \none [\vdots] & \none [\vdots] \\
%\none & \none &  \bullet & \none [\cdots] &    &    \\
%\none &   &   & \none [\cdots] &     &  \\
%\end{ytableau}
\\
\alpha_n &  & 2\alpha_j+2\alpha_{j+1}+\cdots + 2\alpha_{n-1} +\alpha_n & 
%& 2\alpha_j+2\alpha_{j+1}+\cdots + 2\alpha_{n-1} +\alpha_n
\\
\end{matrix}
$
\ytableausetup{boxsize=normal}
}
\end{center}
\noindent
The root $2\alpha_j+2\alpha_{j+1}+\cdots + 2\alpha_{n-1} +\alpha_n$ is invariant under all reflections except $s_j$ and $s_{j-1}$ which will not act on the root.  Thus the label in $\lambda_{p(w_K)}$ is $2(n-j)+1$. Adding up all the labels of these boxes gives that $$p_{s_n}(w_K)=\sum \limits_{j=1}^n (2j-1)\cdot t =n^2\cdot t.$$
To compute $p_{v_J}(w_K)$ and $p_{v_K}(w_K)$ we need to compute the $\lambda_{p(w_K)}$ diagram labels of rows $y_{n-1}$ and $x_1$. If the $h^{th}$ box is box $i\neq n$ of row $x_1$ then $$\mathbf{r}(\mathbf{h},w_K)=x_nx_{n-1}\cdots x_{2} s_{1}s_{2}\cdots s_{i-1}(\alpha_i)=
x_nx_{n-1}\cdots x_{2} \left(\sum \limits_{m=1}^i \alpha_m\right). $$
By moving a root through the diagram, $x_nx_{n-1}\cdots x_{2}(\alpha_1)=\alpha_1+\alpha_2+\cdots +\alpha_n$ and if $1<i<n$ then $x_nx_{n-1}\cdots x_{2}(\alpha_i)$ gets pushed through the diagram as follows:
\begin{center}
\scalebox{.85}{
\ytableausetup{boxsize=2em}
$\begin{matrix}
\begin{ytableau}
\none &\none & \none & \none &\none &\none & \none & \none &  s_n\\
\none &\none &\none & \none  &\none [s_{n-i+1}] &\none & \none &   \scriptscriptstyle s_{n-1}&  s_n  \\
\none &\none &\none & \none & \none [\searrow] &\none &\none[\scriptscriptstyle\iddots]  & \none [\scriptscriptstyle\vdots] & \none [\scriptscriptstyle \vdots] \\
\none [x_{n-i+1}] &\none &\none &\none [ s_{n-i+1}] &\none &   & \none[\scriptscriptstyle\cdots] & \scriptscriptstyle s_{n-1}&  s_n  \\
\none &\none & \none &\none &\none[\searrow] &\none[ \scriptscriptstyle \vdots] & \none [\scriptscriptstyle\vdots] & \none [\scriptscriptstyle\vdots]& \none [\scriptscriptstyle\vdots] \\
\none &\none & \none &  s_{2}  & \none [\scriptscriptstyle\cdots] &   &\none[\scriptscriptstyle\cdots]& \scriptscriptstyle s_{n-1} & s_n  \\
\none &\none & \bullet &   & \none [\scriptscriptstyle\cdots] &   &\none[\scriptscriptstyle\cdots]&      &  \\
\end{ytableau}
& \rightarrow &
\begin{ytableau}
\none & \none & \none &\none &\none & \none & \none &  s_n\\
\none &\none & \none &\none  [s_{n-i+1}]&\none & \none &   \scriptscriptstyle s_{n-1}&  s_n  \\
\none &\none & \none & \none [\searrow]&\none &\none[\scriptscriptstyle\iddots]  & \none [\scriptscriptstyle\vdots] & \none [\scriptscriptstyle \vdots] \\
\none &\none &\none &\none &   & \none[\scriptscriptstyle\cdots] & \scriptscriptstyle s_{n-1}&   \bullet \\
\none & \none &\none &\none[\scriptscriptstyle\iddots] &\none[ \scriptscriptstyle \vdots] & \none [\scriptscriptstyle\vdots] & \none [\scriptscriptstyle\vdots]& \none [\scriptscriptstyle\vdots] \\
\none & \none &   & \none [\scriptscriptstyle\cdots] &   &\none[\scriptscriptstyle\cdots]&  &   \\
\none &  &   & \none [\scriptscriptstyle\cdots] &   &\none[\scriptscriptstyle\cdots]&      &  \\
\end{ytableau} \\
\alpha_i &  & \alpha_{n-1} + \alpha_{n} \\
\end{matrix}
$

\ytableausetup{boxsize=normal}
}
\end{center}
Since we are working in type $C$ the reflection $s_{n-1}$ sends $\alpha_n$ to $2\alpha_{n-1}+\alpha_n$ so the next row of the diagram acts like this:
\begin{center}
\scalebox{.7}{
\ytableausetup{boxsize=2em}
$
\begin{matrix}
\begin{ytableau}
\none & \none & \none &\none &\none & \none & \none &  s_n\\
\none &\none & \none &\none  [s_{n-i+1}] &\none & \none &   \scriptscriptstyle s_{n-1}&  s_n  \\
\none &\none & \none & \none[\searrow] &\none &\none[\scriptscriptstyle\iddots]  & \none [\scriptscriptstyle\vdots] & \none [\scriptscriptstyle \vdots] \\
\none  &\none &\none &\none &   & \none[\scriptscriptstyle\cdots] & \bullet&   \\
\none & \none &\none &\none[\scriptscriptstyle\iddots] &\none[ \scriptscriptstyle \vdots] & \none [\scriptscriptstyle\vdots] & \none [\scriptscriptstyle\vdots]& \none [\scriptscriptstyle\vdots] \\
\none & \none &   & \none [\scriptscriptstyle\cdots] &   &\none[\scriptscriptstyle\cdots]&  &   \\
\none &  &   & \none [\scriptscriptstyle\cdots] &   &\none[\scriptscriptstyle\cdots]&      &  \\
\end{ytableau}
& \rightarrow &
\begin{ytableau}
\none & \none & \none &\none &\none & \none & \none &  s_n\\
\none &\none & \none &\none &\none & \none &   \scriptscriptstyle s_{n-1}&  s_n  \\
\none &\none & \none & \none &\none &\none[\scriptscriptstyle\iddots]  & \none [\scriptscriptstyle\vdots] & \none [\scriptscriptstyle \vdots] \\
\none &\none &\none &\none &  \bullet & \none[\scriptscriptstyle\cdots] & &    \\
\none & \none &\none &\none[\scriptscriptstyle\iddots] &\none[ \scriptscriptstyle \vdots] & \none [\scriptscriptstyle\vdots] & \none [\scriptscriptstyle\vdots]& \none [\scriptscriptstyle\vdots] \\
\none & \none &   & \none [\scriptscriptstyle\cdots] &   &\none[\scriptscriptstyle\cdots]&  &   \\
\none &  &   & \none [\scriptscriptstyle\cdots] &   &\none[\scriptscriptstyle\cdots]&      &  \\
\end{ytableau} \\
\alpha_{n-2} +\alpha_{n-1} +\alpha_n & & \alpha_{n-i+1} +\cdots + \alpha_n
\end{matrix}$
\ytableausetup{boxsize=normal}
}
\end{center}
\noindent
Row $x_{n-i+2}$ eliminates everything except  $\alpha_{n-i+1}$ which is preserved for the rest of the diagram.  So
$$\mathbf{r}(\mathbf{h},w_K)=x_nx_{n-1}\cdots x_{2} \left(\sum \limits_{m=1}^i \alpha_m\right)= \sum \limits_{m=1}^n \alpha_m +\sum \limits_{m=2}^i \alpha_{n-m+1} $$
and the entry in $\lambda_{p(w_K)}$ is $n+i-1$.  Like in type $B$,  $x_nx_{n-1}\cdots x_2x_1(\alpha_i)= \alpha_{n-i}$ for $i\neq n$.  We can fill in the entries of rows $x_1$ and $y_{n-1}$ of $\lambda_{p(w_K)}$ as follows:
\begin{center}
\scalebox{.85}{
\ytableausetup
{mathmode, boxsize=2.5em}
$\begin{ytableau}
\none [x_1] & *(lightgray)  \scriptstyle n & *(lightgray) \scriptstyle n+1& *(lightgray) \scriptstyle n+2 & \none [\cdots] &  *(lightgray) \scriptscriptstyle n+i-1 & \scriptstyle n+i & \none [\cdots] & \scriptstyle 2n-2 &  \scriptstyle 2n-1 & \scriptstyle n \\
\none [y_{n-1}] & \scriptstyle 1& \scriptstyle 2&\scriptstyle 3 & \none [\cdots] & \scriptstyle i & *(lightgray)\scriptstyle i+1& \none [\cdots]& *(lightgray) \scriptstyle {n-2} & *(lightgray) \scriptstyle {n-1}  \\ 
\end{ytableau}$}
\end{center}
A $v_J$-excitation of $w_K$ is marked in light gray.  Summing over the number of boxes of $\mu$ that are in row $x_1$ as in the previous section gives $$p_{v_J}(w_K)=\sum\limits_{i=0}^{n-1} \frac{(n+i-1)!}{i!}t^{n-1}=\frac{(2n-1)!}{n!}t^{n-1}.$$
We also see that there is only one $v_K$-excitation of $w_K$  so $p_{v_K}(w_K)=\frac{(2n-1)!}{(n-1)!}t^n$. 

\noindent
Putting all the pieces together we obtain $$c_{n,J}^K=p_{s_n}(w_K)\frac{p_{v_{J}}(w_K)}{p_{v_K}(w_K)}= n^2\frac{\frac{(2n-1)!}{n!}}{\frac{(2n-1)!}{(n-1)!}}=\frac{n^2}{n}=n.$$
   \end{proof}

\subsection{Type D}
\begin{figure}[h]
\begin{tikzpicture}
\draw[] (0,-6) circle [radius=0.08];
\node [below] at (0,-6)  {$s_1$};
\draw[] (1,-6) circle [radius=0.08];
\node [below] at (1,-6)  {$s_2$};
\draw (0.08,-6) -- (.92,-6);
\draw[] (2,-6) circle [radius=0.08];
\node [below] at (2,-6)  {$s_3$};
\draw (1.08,-6) -- (1.92,-6);
\draw[] (3,-6) circle [radius=0.08];
\node [below] at (3,-6)  {$s_{n-3}$};
\draw [dashed] (2.08,-6) -- (2.92,-6);
\draw[] (4,-6) circle [radius=0.08];
\node [below] at (4,-6)  {$s_{n-2}$};
\draw (3.08,-6) -- (3.92,-6);
\draw[] (5,-6) circle [radius=0.08];
\node [below] at (5,-6)  {$s_{n-1}$};
\draw (4.08,-6) -- (4.92,-6);
\draw[] (4,-5) circle [radius=0.08];
\node [above] at (4,-5)  {$s_n$};
\draw (4,-5.92) -- (4,-5.08);
\node at (-1,-6){$D_n$};
\end{tikzpicture}
\end{figure}
\begin{Proposition}
Theorem \ref{thm:by type} holds when $K$ is a connected type $D$ root system.
\end{Proposition}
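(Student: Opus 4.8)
The plan is to follow the template of the type $B$ and $C$ proofs and reduce the statement to a single structure-constant identity via Lemma~\ref{lemma:reflection s_n}. Put $J = K \setminus \{s_n\}$; since $s_n$ is the branch leaf attached to $s_{n-2}$, the induced diagram of $J$ is the path $s_1 - s_2 - \cdots - s_{n-1}$, a connected type $A_{n-1}$ root subsystem with $v_J = s_1 s_2 \cdots s_{n-1}$, while $v_K = s_1 s_2 \cdots s_{n-2} s_{n-1} s_n$. Lemma~\ref{lemma:reflection s_n} gives $c^K_{n,J} \cdot (n-1)! \cdot p_{v_K} = \prod_{\alpha_i \in K} p_{s_i}$, so since the type $D_n$ case of Theorem~\ref{thm:by type} calls for the constant $\tfrac{n!}{2} = \tfrac{n}{2}\,(n-1)!$, it is enough to prove $c^K_{n,J} = \tfrac{n}{2}$. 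Because $c^K_{n,J} = p_{s_n}(w_K)\cdot\frac{p_{v_J}(w_K)}{p_{v_K}(w_K)}$, the whole problem reduces to evaluating the three Peterson Schubert classes $p_{s_n}$, $p_{v_J}$ and $p_{v_K}$ at the single $S^1$-fixed point $w_K$.

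To do that I would write the Sage reduced word for $w_K = w_0$ of type $D_n$ as the skew diagram $\lambda_{w_K}$, generalizing the $D_5$ picture of Figure~\ref{fig:the word w_K}, which carries two offset ``fork columns'' for $s_{n-1}$ and $s_n$. As in types $B$ and $C$, every $s_n$-, $v_J$- and $v_K$-excitation of $w_K$ lives in a narrow band near the bottom of the diagram, so only the entries of $\lambda_{p(w_K)}$ in that band are needed, and these are computed by pushing the roots $\mathbf{r}(\mathbf{j},w_K)$ through the remaining rows of $\lambda_{w_K}$ with the type $D$ reflection rules. The new wrinkle compared with types $B$ and $C$ is that the branch node $s_{n-2}$ has two neighbors, $s_{n-1}$ and $s_n$, sitting above it in $\lambda_{w_K}$, so a root travelling down the staircase can pick up contributions from both fork columns; getting this bookkeeping exactly right is the step I expect to be the main obstacle, and it is what makes the final answer $\tfrac{n}{2}$ rather than the $c^K_{n,J} = n$ of types $B$ and $C$.

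With the labels of $\lambda_{p(w_K)}$ in hand I would finish as in type $B$: read off $p_{s_n}(w_K)$ as the sum of the fork-column labels over the $s_n$-boxes, read off $p_{v_K}(w_K) = M(\mu)\,t^{n}$ from the $v_K$-excitation(s) of $w_K$, and compute $p_{v_J}(w_K) = \sum_i M(\mu_i)\,t^{n-1}$ by indexing the $v_J$-excitations according to how their boxes are distributed among the relevant rows of the staircase. That last sum should collapse by a binomial identity of the same flavor as those used in the type $B$ and $C$ proofs. Substituting the three evaluations into $c^K_{n,J} = p_{s_n}(w_K)\cdot\frac{p_{v_J}(w_K)}{p_{v_K}(w_K)}$ and simplifying then yields $c^K_{n,J} = \tfrac{n}{2}$, and hence $C = \tfrac{n!}{2}$. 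As a safeguard against off-by-one errors in the root-pushing, I would verify the formula directly at $n = 4$, where the triality symmetry of $D_4$ makes the three outer nodes interchangeable and the computation is small.
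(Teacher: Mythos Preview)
Your outline is correct and follows essentially the same route as the paper: reduce via Lemma~\ref{lemma:reflection s_n} to proving $c^K_{n,J}=\tfrac{n}{2}$, then evaluate $p_{s_n}(w_K)$, $p_{v_J}(w_K)$, and $p_{v_K}(w_K)$ by root-pushing through the type $D$ staircase diagram and collapsing the $v_J$-sum with a binomial identity. The one wrinkle the paper makes explicit that you do not is that the shape of $\lambda_{w_K}$ and the $s_n$-box count split into two cases according to the parity of $n$ (the fork columns alternate which of $s_{n-1}$, $s_n$ appears in each row $x_j$), and both parities must be checked to confirm $p_{s_n}(w_K)=\tfrac{n(n-1)}{2}t$.
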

\begin{proof}
Let $K$ be a connected type $D_n$ root system and $J\subset K= K\setminus \{ s_n \}$. By Lemma \ref{lemma:reflection s_n} it suffices to show that $c^K_{n,J}=\frac{n}{2}$.  If $K$ is a root system of type $D_n$ then the shape of $\lambda_{w_K}$ depends on whether $n$ is even or odd. Figure~\ref{fig:type d} gives the two diagrams for type $D_n$.  In each of these shapes, there is only one $v_K$-excitation of $w_K$. This subword occurs in the rows $x_1$ and $y_{n-1}$ and looks like:
$$
\begin{ytableau}
\none [x_{1}]&*(lightgray) s_1&*(lightgray)s_2 & *(lightgray)s_3 & \none [\cdots]& *(lightgray) s_{n-3}&*(lightgray) s_{n-2}& \none &*(lightgray)s_n  \\
\none [y_{n-1}]& s_1&s_2 & s_3 & \none [\cdots]&s_{n-3}&  s_{n-2}& *(lightgray) s_{n-1}   \\
\end{ytableau}.
$$
\noindent
The $v_J$-excitations of $w_K$ are in the same two rows and there are $n-1$ such excitations. Each excitation $\mu$ looks like:
$$
\begin{ytableau}
\none [x_{1}]&*(lightgray) s_1&*(lightgray)s_2 & *(lightgray)s_3  & \none [\cdots]& *(lightgray) s_{i}& s_{i+1}& \none [\cdots]&  s_{n-3}& s_{n-2}& \none &s_n  \\
\none [y_{n-1}]& s_1&s_2 & s_3  & \none [\cdots]& s_{i}&*(lightgray)  s_{i+1}& \none [\cdots]&*(lightgray) s_{n-3}&*(lightgray)  s_{n-2}& *(lightgray) s_{n-1}   \\
\end{ytableau}.
$$

\begin{figure}
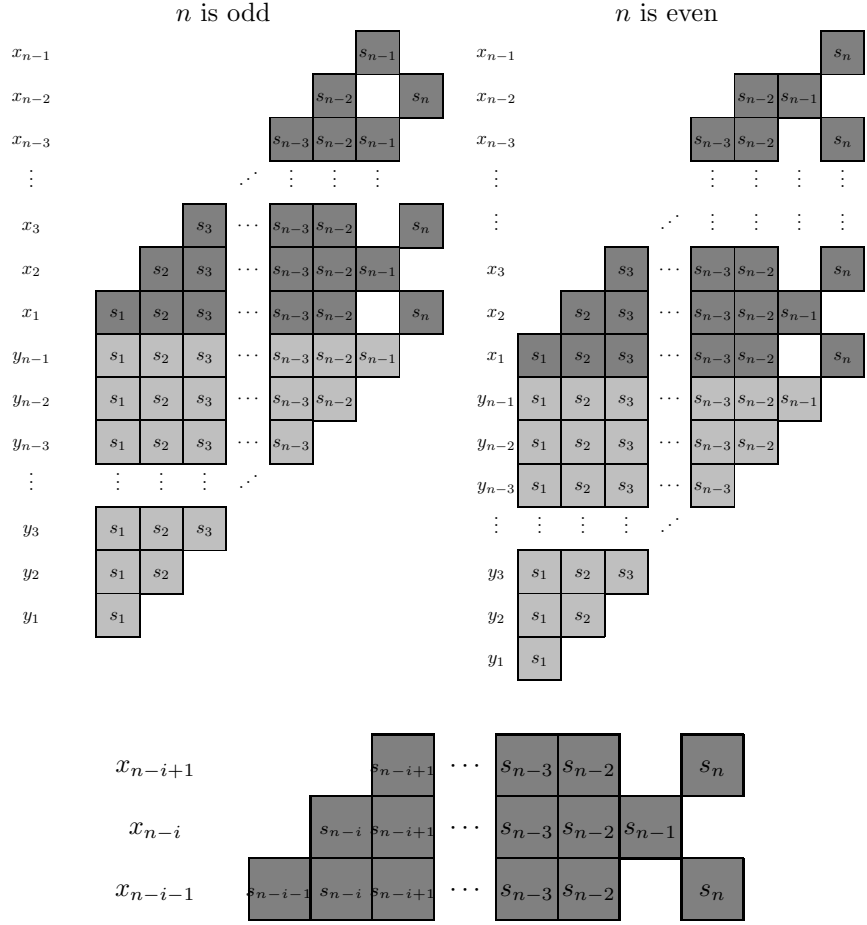
 

\caption{The diagrams of $\lambda_{w_K}$ for $K$ a type $D_n$ root system. } \label{fig:type d}
$$\begin{array}{cc}
n \text{ is odd} & n\text{ is even} \\
\ytableausetup{boxsize=2.3em}
\scalebox{.7}{
\begin{ytableau}
\none[x_{n-1}]&\none &\none &\none & \none & \none &\none &\none & *(gray)s_{n-1}  \\
\none [x_{n-2}]&\none &\none &\none &\none &\none  &\none &  *(gray) s_{n-2} &\none & *(gray) s_{n}\\
\none [x_{n-3}]&\none &\none &\none &\none &\none  &  *(gray) s_{n-3} & *(gray) s_{n-2} & *(gray) s_{n-1}\\
\none [\vdots] &\none &\none &\none &\none &\none  [\iddots] & \none [\vdots] &\none [\vdots] &\none [\vdots]\\
\none [x_{3}]&\none &\none &\none & *(gray) s_3& \none [\cdots]&*(gray) s_{n-3}& *(gray) s_{n-2}& \none  & *(gray) s_n\\
\none [x_{2}]&\none &\none &*(gray) s_2&*(gray) s_3 & \none [\cdots]&*(gray) s_{n-3}& *(gray) s_{n-2}& *(gray)s_{n-1}   \\
\none [x_{1}]&\none &*(gray) s_1&*(gray)s_2 & *(gray)s_3 & \none [\cdots]& *(gray) s_{n-3}&*(gray) s_{n-2}&  &*(gray)s_n  \\
\none [y_{n-1}]&\none & *(lightgray)s_1&*(lightgray)s_2 &*(lightgray) s_3 & \none [\cdots]&*(lightgray) s_{n-3}& *(lightgray) s_{n-2}& *(lightgray) s_{n-1}   \\
\none [y_{n-2}]&\none & *(lightgray)s_1&*(lightgray)s_2 &*(lightgray) s_3 & \none [\cdots]&*(lightgray) s_{n-3}& *(lightgray) s_{n-2}\\
\none [y_{n-3}]&\none & *(lightgray)s_1&*(lightgray)s_2 &*(lightgray) s_3 & \none [\cdots]&*(lightgray) s_{n-3}\\
\none [\vdots] &\none & \none [\vdots] &  \none [\vdots]&  \none [\vdots] & \none [\iddots] & \none \\
\none [y_{3}]&\none & *(lightgray)s_1& *(lightgray)s_2& *(lightgray)s_3 &   \none \\
\none [y_{2}]&\none & *(lightgray)s_1& *(lightgray)s_2&   \none & \none \\
\none [y_{1}]&\none & *(lightgray)s_1& \none & \none & \none \\
\end{ytableau}}
&
\scalebox{.7}{
\begin{ytableau}
\none [x_{n-1}]&\none &\none & \none & \none &\none &\none &\none & *(gray)s_{n}  \\
\none [x_{n-2}]&\none &\none &\none &\none  &\none &  *(gray) s_{n-2}  & *(gray) s_{n-1}\\
\none [x_{n-3}]&\none &\none &\none &\none  &  *(gray) s_{n-3} & *(gray) s_{n-2} & \none & *(gray) s_{n}\\
\none [\vdots] &\none &\none &\none &\none  & \none [\vdots] &\none [\vdots] &\none [\vdots]&\none [\vdots]\\
\none [\vdots] &\none &\none &\none &\none [\iddots] & \none [\vdots] &\none [\vdots] &\none [\vdots]&\none [\vdots]\\
\none [x_3]&\none &\none & *(gray) s_3& \none [\cdots]&*(gray) s_{n-3}& *(gray) s_{n-2}& \none  & *(gray) s_n\\
\none [x_2]&\none &*(gray) s_2&*(gray) s_3 & \none [\cdots]&*(gray) s_{n-3}& *(gray) s_{n-2}& *(gray)s_{n-1}   \\
\none [x_1]&*(gray) s_1&*(gray)s_2 & *(gray)s_3 & \none [\cdots]& *(gray) s_{n-3}&*(gray) s_{n-2}&  &*(gray)s_n  \\
\none  [y_{n-1}]& *(lightgray)s_1&*(lightgray)s_2 &*(lightgray) s_3 & \none [\cdots]&*(lightgray) s_{n-3}& *(lightgray) s_{n-2}& *(lightgray) s_{n-1}   \\
\none [y_{n-2}]& *(lightgray)s_1&*(lightgray)s_2 &*(lightgray) s_3 & \none [\cdots]&*(lightgray) s_{n-3}& *(lightgray) s_{n-2}\\
\none [y_{n-3}]& *(lightgray)s_1&*(lightgray)s_2 &*(lightgray) s_3 & \none [\cdots]&*(lightgray) s_{n-3}\\
\none [\vdots]& \none [\vdots] &  \none [\vdots] &  \none [\vdots]& \none [\iddots] & \none \\
\none [y_{3}]& *(lightgray)s_1& *(lightgray)s_2& *(lightgray)s_3 &   \none \\
\none [y_{2}]& *(lightgray)s_1& *(lightgray)s_2&   \none & \none \\
\none [y_{1}]& *(lightgray)s_1& \none & \none & \none \\
\end{ytableau}}
\\
\end{array}$$
\end{figure}\noindent
We need to find the labels of these boxes in $\lambda_{p(w_K)}$ in order to compute $p_{v_K}(w_K)$ and $p_{v_J}(w_K)$.  Denote by $x$ the word obtained from the first $n-1$ rows of $w_K$, i.e. $x=x_{n-1}x_{n-2}\cdots x_2x_1$. We compute $x(\alpha_i)$ for $ i < n$.\\
\\
First we examine the action of $x$ on $\alpha_i$ for $1<i \leq n-2$.
Suppose that $i<n-2$. Then we take the action of $x$ row by row to get to the root $\alpha_{n-2}$. The first reflection in $x_1$ to not preserve $\alpha_i$ is $s_{i+1}$ which sends it to $\alpha_i+\alpha_{i+1}$.  The next reflection, $s_i$ then brings the root to $\alpha_{i+1}$ which the rest of the reflections in $x_1$ preserves.  Similarly if $x_2(\alpha_{i+1})=\alpha_{i+2}$ if $i+1<n-2$.  This pattern continues until
$$x_{n-i-2}x_{n-i-3}\cdots x_1(\alpha_i)=\alpha_{n-2}$$  
The action of the next three rows, $x_{n-i-1},x_{n-i},$ and $x_{n-i+1}$ depend on whether $n-i$ is even or odd. If $n-i$ is odd then the next three rows have form\\
\begin{center}
\begin{ytableau}
\none [x_{n-i+1}]&\none &\none &\none & *(gray) \scriptstyle s_{n-i+1}& \none [\cdots]&*(gray) s_{n-3}& *(gray) s_{n-2}& \none  & *(gray) s_n\\
\none [x_{n-i}]&\none &\none &*(gray) \scriptstyle s_{n-i}&*(gray) \scriptstyle s_{n-i+1} & \none [\cdots]&*(gray) s_{n-3}& *(gray) s_{n-2}& *(gray)s_{n-1}   \\
\none [x_{n-i-1}]&\none &*(gray) \scriptstyle s_{n-i-1} &*(gray) \scriptstyle s_{n-i}& *(gray) \scriptstyle s_{n-i+1} & \none [\cdots]& *(gray) s_{n-3}&*(gray) s_{n-2}&\none  &*(gray)s_n  \\
\end{ytableau}
\end{center}
The actions of these rows are:
$$ 
\begin{array}{rcl}
x_{n-i-1}(\alpha_{n-2})&=&\alpha_{n-1}\\
x_{n-i}(\alpha_{n-1})&=& \alpha_{n-i}+\cdots +\alpha_{n-2}+\alpha_{n-1}\\
x_{n-i+1}(\alpha_{n-i}+\cdots +\alpha_{n-2}+\alpha_{n-1})&=&\alpha_{n-i}
\end{array}
$$
\noindent
If instead $n-i$ is even the next three rows look like
\begin{center}
\scalebox{.85}{
\ytableausetup{boxsize=2.5em}
\begin{ytableau}
\none [x_{n-i-1}]&\none &\none&\none & *(gray)\scriptstyle s_{n-i+1}& \none [\cdots]& *(gray)\scriptstyle s_{n-3}&*(gray)\scriptstyle s_{n-2}&  *(gray) \scriptstyle s_{n-1} & \none \\
\none [x_{n-i}]&\none &\none &*(gray)\scriptstyle s_{n-i} & *(gray)\scriptstyle s_{n-i+1}& \none [\cdots]&*(gray)\scriptstyle s_{n-3}& *(gray) \scriptstyle s_{n-2}& \none  & *(gray) s_n\\
\none [x_{n-i-1}]&\none &*(gray)\scriptstyle s_{n-i-1} &*(gray) \scriptstyle s_{n-i}&*(gray)\scriptstyle s_{n-i+1} & \none [\cdots]&*(gray)\scriptstyle s_{n-3}& *(gray) \scriptstyle s_{n-2}& *(gray)\scriptstyle s_{n-1}   \\
\end{ytableau}}
\end{center}
\noindent
and act by:
$$ 
\begin{array}{rcl}
x_{n-i-1}(\alpha_{n-2})&=&\alpha_{n}\\
x_{n-i}(\alpha_{n-1})&=& \alpha_{n-i}+\cdots +\alpha_{n-2}+\alpha_{n}\\
x_{n-i+1}(\alpha_{n-i}+\cdots +\alpha_{n-2}+\alpha_{n})&=&\alpha_{n-i}
\end{array}
$$
Whether $n-i$ is odd or even, the root $\alpha_{n-i}$ is invariant under the action of $s_j$ for $j>n-i+1$ so $x(\alpha_i)=\alpha_{n-i}$ for all $i$ greater than $1$ and less than $n-2$.\\
\\
If we start with the root $\alpha_1$ then 
$x_{n-3}x_{n-4}\cdots x_{1}(\alpha_1)=\alpha_{n-2}$. The rest of the computation is
$$x_{n-1}x_{n-2}(\alpha_{n-2})=\begin{cases}
s_{n-1}s_{n-2}s_n(\alpha_{n-2})=\alpha_n & \text{ if $n$ is odd}\\
s_{n}s_{n-2}s_{n-1}(\alpha_{n-2})=\alpha_{n-1} &\text{ if $n$ is even}
\end{cases}
$$
\\
Next we address $x(\alpha_{n-1})$. Going row by row, $$x_1(\alpha_{n-1})= \alpha_1+\alpha_2+\cdots +\alpha_{n-2}+\alpha_{n-1} \text{ and }$$
$$ x_2(\alpha_1+\alpha_2+\cdots ++\alpha_{n-2}+\alpha_{n-1})=\alpha_1$$ which is invariant under $s_j$ for $j>2$.  Thus $x(\alpha_{n-1})=\alpha_1$.\\
\\
The result of these computations is that for $i<n$ the word $x$ takes each root $\alpha_i$ to a root $\alpha_j$ and therefor $\pi_1(x(\alpha_i))=t$ for all $i<n$. Since the label in the $i^{th}$ box of row $y_{n-1}$ is the height of the root $xs_1s_2 \cdots s_{i-1}(\alpha_i)=x(\alpha_1+\cdots + \alpha_i)$ that box in $\lambda_{p(w_K)}$ is labeled $i$.\\
\\
We also want to find the root corresponding to the $i^{th}$ box of row $x_1$. A non-reduced way to write the word preceding that box in $w_K$ is $xs_ns_{n-2}s_{n-3}\cdots s_{i+1}s_i$ and the corresponding root is
$$\begin{array}{rl}
xs_n s_{n-2} s_{n-3} \cdots s_{i+1}s_i(\alpha_i)&=x(-\alpha_i-\alpha_{i+1}-\cdots -\alpha_{n-2}-\alpha_n)\\
\\ & =-x(\alpha_i+\alpha_{i+1}+\cdots +\alpha_{n-2}) -x(\alpha_n).\end{array}$$
We compute the action of $x$ on the root $\alpha_n$:
$$x_1(\alpha_n)=-(\alpha_1+\alpha_2+\cdots +\alpha_{n-2}+\alpha_n)$$
$$s_{n-1}(-(\alpha_1+\alpha_2+\cdots +\alpha_{n-2}+\alpha_n))=-(\alpha_1+\alpha_2+\cdots +\alpha_{n-2}+\alpha_{n-1}+\alpha_n)$$
$$s_{n-2}(-(\alpha_1+\alpha_2+\cdots +\alpha_{n-2}+\alpha_{n-1}+\alpha_n))=-(\alpha_1+\alpha_2+\cdots +2\alpha_{n-2}+\alpha_{n-1}+\alpha_n)$$
Each subsequent reflection places the coefficient $2$ in front of another simple root until
$$x_2x_1(\alpha_n)=-(\alpha_1+2\alpha_2+\cdots +2\alpha_{n-2}+\alpha_{n-1}+\alpha_n).$$
This root is invariant under the action of $s_i$ for $i>2$ and thus $$x(\alpha_n)=-(\alpha_1+2\alpha_2+\cdots +2\alpha_{n-2}+\alpha_{n-1}+\alpha_n)$$
\noindent
Thus the root associated with the $i^{th}$ box of row $x_1$ is   
$
xs_n s_{n-2} s_{n-3} \cdots s_{i+1}(\alpha_i)=$
$$-x(\alpha_i+\alpha_{i+1}+\cdots \alpha_{n-2})+ \alpha_1+2\alpha_2+\cdots +2\alpha_{n-2}+\alpha_{n-1}+\alpha_n.
$$
which has height in the root poset $2n-3-(n-i-1)=n+i-2$. We can now label the rows $x_1$ and $y_{n-1}$ in the diagram $\lambda_{p(w_K)}$.
$$
\begin{ytableau}
\none [x_{1}]&\scriptstyle n-1 &\scriptstyle n &\scriptstyle  n+1 & \none [\cdots]& \scriptstyle 2n -5& \scriptstyle2n-4& \none &\scriptstyle2n-3  \\
\none [y_{n-1}]&\scriptstyle 1&\scriptstyle2 &\scriptstyle 3  &  \none [\cdots]& \scriptstyle{n-3}& \scriptstyle {n-2}& \scriptstyle {n-1}   \\
\end{ytableau}.
$$
This means that $p_{v_K}(w_K)=\frac{(2n-3)!}{(n-2)!}(n-1) t^n$. Furthermore when $\mu$ is a $v_J$-excitation with $i$ boxes in the top row contributes $M(\mu)=(n-1)\frac{(n+i-2)!}{i!}$ to $p_{v_J}(w_K)$.   Therefore
$$p_{v_J}(w_K)=\left[ \sum \limits_{i=0}^{n-2} (n-1)\frac{(n+i-2)!}{i!} \right] t^{n-1} = \frac{(2n-3)!}{(n-2)!}t^{n-1}.$$
The constant $c_{n,J}^K$ is
\begin{equation}
\label{eq:almost type d}
p_{s_n}(w_K)\frac{p_{v_J}(w_K)}{w_{v_K}(w_K)}=p_{s_n}(w_K) \frac{\frac{(2n-3)!}{(n-2)!}t^{n-1}}{\frac{(2n-3)!}{(n-2)!}(n-1) t^n}=p_{s_n}(w_K)\frac{1}{(n-1)t}.
\end{equation}
The polynomial $p_{s_n}(w_K)$ is computed in the even and odd cases. In both cases, if $m$ is less than $n-1$ and there is a box corresponding to $s_n$ in row $x_m$, then 
$$x_{n-1}x_{n-2}x_{n-3}\cdots x_{m+1}s_ms_{m+1}\cdots s_{n-3}s_{n-2}(\alpha_n)$$
$$=x_{n-1}x_{n-2}x_{n-3}\cdots x_{m+1}(\alpha_m+\alpha_{m+1}+\cdots \alpha_{n-3}+\alpha_{n-2} + \alpha_n)$$
$$=x_{n-1}x_{n-2}x_{n-3}\cdots x_{m+2}(\alpha_m+2\alpha_{m+1}+\cdots + 2\alpha_{n-3}+2\alpha_{n-2} +\alpha_{n-1}+ \alpha_n).$$
The root $\alpha_m+2\alpha_{m+1}+\cdots + 2\alpha_{n-3}+2\alpha_{n-2} +\alpha_{n-1}+ \alpha_n$ is invariant under the action of $s_i$ for $i>m+1$.  Thus the root corresponding to that box is $$\alpha_m+2\alpha_{m+1}+\cdots + 2\alpha_{n-3}+2\alpha_{n-2} +\alpha_{n-1}+ \alpha_n$$ and the entry in $\lambda_{p(w_K)}$ is $2(n-m)-1$.\\
\\
If $n$ is odd then row $x_{n-1}$ does not contain the reflection $s_n$ and  
\begin{align*}
p_{s_n}(w_K)& = \left[  \sum \limits_{\substack{1 \leq m \leq n-1\\ m \text{ odd}}} 2(n-m)-1 \right] \cdot t\\
& = \left[  \sum \limits_{l=1}^{\frac{n}{2}-1} 2n-4l-1 \right] \cdot t\\
&= \frac{n^2-n}{2} \cdot t.
\end{align*}
\noindent
If $n$ is even then the row $x_{n-1}$ contains only the reflection $s_n$ and that box corresponds to the root $\alpha_n$.  Since $\pi(\alpha_n)=t$ we have
\begin{align*}
p_{s_n}(w_K)&= \left[1+ \sum \limits_{\substack{1\leq m \leq n-3\\ m \text{ odd}}} 2(n-m)-1 \right] \cdot t\\
&= \left[ 1+\sum \limits_{l=1}^{\frac{n}{2}-1} 2n-2(2l-1)-1 \right] \cdot t\\
&=  \left[ 1+\sum \limits_{l=1}^{\frac{n}{2}-1} 2n-4l+1 \right]\cdot t\\
&= \left[  1+ (\frac{n}{2}-1)(2n+1) - 4\frac{(\frac{n}{2}-1)(\frac{n}{2})}{2} \right] \cdot t\\
&=\frac{n^2-n}{2}\cdot t
\end{align*}
\noindent
Thus for any $n>3$ regardless of parity, Equation~\eqref{eq:almost type d} becomes
$$c_{n,J}^K=\frac{n^2-n}{2} t \cdot \frac{1}{(n-1)t}=\frac{n}{2}.$$
   \end{proof}
\subsection{Type $E$}
\begin{figure}[h]
\begin{tikzpicture}
\draw[] (0,0) circle [radius=0.08];
\node [below] at (0,0)  {$s_1$};
\draw[] (1,0) circle [radius=0.08];
\node [below] at (1,0)  {$s_3$};
\draw (0.08,0) -- (.92,0);
\draw (1.08,0) -- (1.92,0);
\draw[] (2,0) circle [radius=0.08];
\node [below] at (2,0)  {$s_4$};
\draw[] (3,0) circle [radius=0.08];
\node [below] at (3,0)  {$s_{n-1}$};
\draw [dashed] (2.08,0) -- (2.92,0);
\draw[] (4,0) circle [radius=0.08];
\node [below] at (4,0)  {$s_{n}$};
\draw [] (3.08,0) -- (3.92,0);
\draw[] (2,1) circle [radius=0.08];
\node [above] at (2,1)  {$s_{2}$};
\draw [] (2,.08) -- (2,.92);
\node at (-1,0){$E_n$:};
\end{tikzpicture}
\end{figure}
\begin{Proposition}
Theorem \ref{thm:by type} holds when $K$ is a connected type $E$ root system.
\end{Proposition}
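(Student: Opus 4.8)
The plan is to reduce the type $E$ statement to a small finite computation. Root systems of type $E$ occur only in ranks $6$, $7$, and $8$, and the constant $C$ produced by Lemma~\ref{lemma one product} depends only on the isomorphism type of the connected subsystem $K$, so it suffices to establish $C=|K|!/3$ in the three cases $K\cong E_6,E_7,E_8$; in each of these $w_K=w_0$ is the longest element of $W_K$, and every evaluation below can be carried out inside $W_K$ itself.

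First I would peel off one simple reflection in order to invoke the already-proved classical case. Labelling the simple roots of $K\cong E_n$ as in Figure~\ref{fig:root systems}, let $\alpha_n$ be the leaf at the end of the long arm and put $J=K\setminus\{\alpha_n\}$; one checks directly from the Dynkin diagrams that $J$ is connected of type $D_{n-1}$. Since $\alpha_n\notin J$, Billey's formula forces $p_{s_n}(w_{J'})=0$ whenever $\alpha_n\notin J'$, so in Equation~\eqref{eq:pre-monk} applied with ``$K$'' equal to $J$ the only coefficient that can be nonzero is $c^{K}_{n,J}$ (this is exactly the argument used in the proof of Lemma~\ref{lemma one product}), giving $p_{s_n}\cdot p_{v_J}=c^{K}_{n,J}\cdot p_{v_K}$ with $c^{K}_{n,J}=p_{s_n}(w_K)\,p_{v_J}(w_K)/p_{v_K}(w_K)$, the denominator nonzero by Lemma~\ref{non-zero}. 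Multiplying the type $D$ case of Theorem~\ref{thm:by type}, namely $\tfrac{(n-1)!}{2}\,p_{v_J}=\prod_{\alpha_i\in J}p_{s_i}$, by $p_{s_n}$ then yields
\[
C=\frac{(n-1)!}{2}\cdot c^{K}_{n,J},
\]
so the theorem for type $E_n$ is equivalent to the single numerical identity $c^{K}_{n,J}=\tfrac{2n}{3}$, that is, $p_{s_n}(w_0)\,p_{v_J}(w_0)=\tfrac{2n}{3}\,p_{v_K}(w_0)$, for $n=6,7,8$.

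Last I would verify these three identities by direct computation. By Property~\ref{billey1} of Proposition~\ref{prop:billey} together with Lemma~\ref{lemma ct}, each of $p_{s_n}(w_0)$, $p_{v_J}(w_0)$, $p_{v_K}(w_0)$ is a single monomial $c\,t^m$ whose integer coefficient is the sum of the Billey contributions over the relevant reduced subwords of a fixed reduced word for $w_0$, so only these integers need be computed. I would run this in Sage; the code is available at http://arxiv.org/abs/1311.2678. (Equivalently one could bypass $D_{n-1}$ and compute $C=\prod_{\alpha_i\in K}p_{s_i}(w_0)\,/\,p_{v_K}(w_0)$ directly, but routing through the type $D$ result shrinks the computation and reuses Theorem~\ref{thm:by type}.) The main obstacle will be sheer size: in $E_8$ the word $w_0$ has length $120$ and $v_K$ has many reduced subwords, so the Billey sums are far too large to evaluate by hand, and the modified excited Young diagrams of Section~6 — which powered the uniform induction in the classical types — have no analogously regular structure in type $E$; hence the reliance on machine computation, the reduction above guaranteeing that checking the three numbers $c^{K}_{n,J}=2n/3$ is enough.
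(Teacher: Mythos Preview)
Your overall plan is sound, but there is a genuine slip in the Dynkin-diagram step: removing the leaf $\alpha_n$ at the end of the long arm of $E_n$ does \emph{not} give $D_{n-1}$ in general. With the labelling of Figure~\ref{fig:root systems} the branch vertex is $s_4$, so $E_6\setminus\{\alpha_6\}$ has arm lengths $(2,1,1)$ from $s_4$ and is indeed $D_5$ (cf.\ Example~\ref{ex: type E vk}), but $E_7\setminus\{\alpha_7\}$ has arm lengths $(2,1,2)$ and is $E_6$, and $E_8\setminus\{\alpha_8\}$ has arm lengths $(2,1,3)$ and is $E_7$. Hence for $n=7,8$ you cannot invoke the type $D$ case, and the factor $(n-1)!/2$ in your displayed formula for $C$ is wrong there. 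The fix is easy: either remove the leaf $\alpha_1$ instead (then $J\cong D_{n-1}$ for all $n\in\{6,7,8\}$ and your argument goes through with $c^{K}_{1,J}=2n/3$), or keep $\alpha_n$ and argue inductively, first proving $E_6$ from $D_5$, then $E_7$ from $E_6$, then $E_8$ from $E_7$, with the target $c^{K}_{n,J}=n$ in the last two cases.

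Once repaired, your route differs from the paper's only in bookkeeping. The paper takes precisely the ``bypass'' you describe in your parenthetical: it applies Lemma~\ref{lemma one product} directly, tabulates $p_{s_i}(w_K)$ for all $i$ and $p_{v_K}(w_K)$ via Sage using the lists $L_{w_K}$ and $L_{p(w_K)}$ in Figure~\ref{fig:e lists}, and reads off $C=|K|!/3$. Your peeling-off argument trades computing all $n$ values $p_{s_i}(w_K)$ for computing $p_{v_J}(w_K)$, which is of comparable difficulty (and in $E_8$ still requires enumerating many reduced subwords of $w_0$), so neither approach meaningfully shrinks the machine computation; both rely on the same Sage code.
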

\begin{proof}
The Giambelli formula for the Peterson varieties in the exceptional types was calculated using Sage code.  While the calculations for types $F_4$ and $G_2$ can easily be reproduced by hand, the $E$ series computations heavily relied on computers.  As such the type $E$ computations will be given with the accompanying code to reproduce the results.  Unlike for the infinite series Lie types, if $K$ is a root system of type $E_n$, the word $w_K$ does not give rise to a nice diagram.  Instead we present that information as a table in Figure \ref{fig:e lists}.\\
\\
The first list $L_{w_K}$ gives the ordered simple reflections $s_{j_i}$ such that $w_K=s_{j_1}s_{j_2}\cdots s_{j_\ell}$ is the reduced word for $w_K$ given by the algebraic combinatorics platform Sage.  The second list $L_{p(w_K)}$  is created using a Sage program.  For each simple reflection $s_{j_i}$ of $w_K$ we record $\frac{1}{t}\cdot \pi_1(\mathbf{r}(\mathbf{i},w_K))$.  The code for this program is available at {http://arxiv.org/abs/1311.2678}.\\

\begin{figure}
\caption{The lists $L_{w_K}$ and $L_{p(w_K)}$ for $K=E_6,E_7,$ and $E_8$. The bold simple reflection in $L_{w_{E_7}}$ and $L_{w_{E_8}}$ is the last occurance of the reflections $s_7$ and $s_8$ respectively. } \label{fig:e lists}
\begin{center} \scalebox{.8}{ \setcounter{MaxMatrixCols}{40}
$\begin{matrix}
L_{w_{E_6}} = & s_1,& s_3,& s_4,& s_5,& s_6,& s_2,& s_4,& s_5,& s_3,& s_4,& s_1,& s_3,& s_2,& s_4,& s_5,& s_6,& s_2,& s_4,\\
& s_5,& s_3,& s_4,& s_1,& s_3,& s_2,& s_4,& s_5,& s_3,& s_4,& s_1,& s_3,& s_2,& s_4,& s_1,& s_3,& s_2,& s_1 \\
L_{p(w_{E_6})} = & 1,& 2,& 3,& 4,& 5,& 4,& 5,& 6,& 6,& 7,& 7,& 8,& 8,& 9,& 10,& 11,& 1,& 2,\\
& 3,& 3,& 4, & 4,& 5,& 5,& 6,& 7,& 1,& 2,& 2,& 3,& 3,& 4,& 1,& 2,& 1,& 1\\
\\
%\end{matrix}$
%\vspace{8mm}
%$\begin{matrix}
L_{w_{E_7}} = & s_7, &s_6, &s_5, &s_4, &s_3, &s_2, &s_4, &s_5, &s_6, &s_7, &s_1, &s_3, &s_4, &s_5,  &s_6, &s_2,   &s_4,   & \\ 
                         &s_5,&s_3,   &s_4, &s_1, &s_3, &s_2, &s_4, &s_5, &s_6, &\mathbf{s_7} ,&s_1, &s_3, &s_4, &s_5, &s_6, &s_2, &s_4, & \\ 
                        &s_5, &s_3, &s_4, &s_1, &s_3, &s_2, &s_4, &s_5, &s_6, &s_2, &s_4, &s_5, &s_3, &s_4, &s_1, &s_3, &s_2, &\\ 
                        &s_4, &s_5, &s_3, &s_4, &s_1, &s_3,&s_2, &s_4, &s_1, &s_3, &s_2, &s_1& \\
L_{p(w_{E_7})} = &1, &2, &3, &4, &5, &5, &6, &7, &8, &9, &6, &7, &8, &9, &10, &9, &10,&\\ &11, 
&11, &12, &12, &13, &13, &14, &15, &16, &17, &1, &2, &3, &4, &5, &4, &5,&\\ &6, &6, &7, &7, &8, &8, &9, &10, &11, &1, &2, &3, &3, &4, &4, &5, &5,&\\ &6, &7, &1, 
&2, &2, &3, &3, &4, &1, &2, &1, &1&\\
\\
%\end{matrix}$
%
%\vspace{8mm}
%\setcounter{MaxMatrixCols}{40}
%$\begin{matrix}
L_{w_{E_8}}= 
& s_8,& s_7,& s_6,& s_5,& s_4,& s_3,& s_2,& s_4,& s_5,& s_6,& s_7,& s_1,& s_3,& s_4,& s_5,& s_6,& s_2, \\
&s_4,& s_5,& s_3,& s_4,& s_1,& s_3,& s_2,& s_4,& s_5,& s_6,& s_7,& s_8,& s_7,& s_6,& s_5,& s_4, &s_3,\\
&s_2,& s_4,& s_5,& s_6,& s_7,& s_1,& s_3,& s_4,& s_5,& s_6,& s_2,& s_4,& s_5,& s_3,& s_4,& s_1,& s_3,\\
& s_2,& s_4,& s_5,& s_6,& s_7,& \mathbf{s_8},& s_7,& s_6,& s_5,& s_4,& s_3,& s_2,& s_4,& s_5,& s_6,& s_7,& s_1,\\
& s_3,& s_4,& s_5,& s_6,& s_2,& s_4,& s_5,& s_3,& s_4,& s_1,& s_3,& s_2,& s_4,& s_5,& s_6,& s_7,& s_1, \\
&s_3,& s_4,& s_5,& s_6, &s_2, &s_4, &s_5, &s_3, &s_4, &s_1, &s_3, &s_2, &s_4, &s_5, &s_6, &s_2, &s_4, \\
&s_5, &s_3, &s_4, &s_1, &s_3, &s_2, &s_4, &s_5, &s_3, &s_4, &s_1, &s_3, &s_2, &s_4, &s_1, &s_3, &s_2, \\
&s_1 &&&&&&&&&&&&&&&&\\
L_{p(w_{E_8})}= 
& 1, &2,& 3,& 4,& 5,& 6,& 6,& 7,& 8,& 9,& 10,& 7,& 8,& 9,& 10,& 11,& 10, \\
&11,& 12,& 12,& 13,& 13,& 14,& 14,& 15,& 16,& 17,& 18,& 29,& 11, &12, &13, &14,& 15,\\
& 15,& 16,& 17,& 18,& 19,& 16,& 17,& 18,& 19,& 20,& 19,& 20,& 21,& 21,& 22,& 22,& 23,\\
& 23, &24,& 25,& 26,& 27,& 28,& 1,& 2,& 3,& 4,& 5,& 5,& 6,& 7,& 8,& 9,& 6,\\
& 7,& 8,& 9,& 10,& 9,& 10,& 11,& 11,& 12,& 12,& 13,& 13,& 14,& 15,& 16,& 17,& 1,\\
& 2,& 3,& 4,& 5,& 4,& 5,& 6,& 6,& 7,& 7,& 8,& 8,& 9,& 10,& 11, &1, &2,\\
& 3,& 3,& 4,& 4,& 5,& 5,& 6,& 7,& 1,& 2,& 2,& 3,& 3,& 4, &1, &2, &1, \\
&1 &&&&&&&&&&&&&&&&
\end{matrix}$}
\end{center}
\end{figure}
\noindent
The only reduced words of $v_K$ are $$v_K=s_1s_2s_3s_4\cdots s_n=s_2s_1s_3s_4\cdots s_n=s_1s_3s_2s_4\cdots s_n.$$
Python code can find all sublists of $L_{w_K}$ that are equal to one of the three corresponding lists.  These sublists are the $v_K$-excitations $\mu$ of $w_K$.  For each excitation $\mu$, $M(\mu)$ is the product of the entries in $L_{p(w_K)}$ in the same positions as those in the sublist $\mu$.  We then sum over all such $\mu$ to get
$$p_{v_K}(w_K)=\left[ \sum \limits_{\substack{\mu \text{ a $v_K$-excitation}} \\ \text{ of  }L_{w_K} }M(\mu) \right] \cdot t^n.$$
We also evaluate $p_{s_i}(w_K)$ by summing all of the entries in $L_{p(w_K)}$ corresponding to entries which are $s_i$ in $L_{w_K}$. 
This gives the following data for the $E$ series:
$$\begin{matrix}
& \text{Type }E_6& \text{Type }E_7& \text{Type }E_8\\
p_{v_K}(w_K) & 887040t^6 &661620960t^7 &11179629901440t^8\\
p_{s_1}(w_K)& 16t &34t &92t\\
p_{s_2}(w_K)&22t &49t &136t\\
 p_{s_3}(w_K)& 30t  &66t & 182t \\
p_{s_4}(w_K)& 42t &  96t & 270t\\
p_{s_5}(w_K)& 30t  &75t  & 220t  \\
p_{s_6}(w_K)&16t & 52t &168t\\
 p_{s_7}(w_K)&&27t&114t\\
 p_{s_8}(w_K)&&&172t
\end{matrix}$$
This table along with Lemma \ref{lemma one product} gives us that the $E$ series Peterson varieties have the following Giambelli's formula.
$$\begin{matrix}
E_6: &  C \cdot p_{v_K}(w_K)=\prod \limits_{1 \leq i \leq 6} p_{s_i}(w_K) & \text{where } &C=240=\frac{6!}{3}\\
E_7: &  C \cdot p_{v_K}(w_K)=\prod \limits_{1 \leq i \leq 7} p_{s_i}(w_K) & \text{where } &C=680=\frac{7!}{3}\\
E_8: &  C \cdot p_{v_K}(w_K)=\prod \limits_{1 \leq i \leq 8} p_{s_i}(w_K) & \text{where } &C=13440=\frac{8!}{3}\\
\end{matrix}$$
   \end{proof}

\subsection{Type $F_4$}
\begin{figure}[h]
\begin{tikzpicture}
\draw[] (0,0) circle [radius=0.08];
\node [below] at (0,0)  {$s_1$};
\draw[] (1,0) circle [radius=0.08];
\node [below] at (1,0)  {$s_2$};
\draw (0.08,0) -- (.92,0);
\draw[] (2,0) circle [radius=0.08];
\node [below] at (2,0)  {$s_3$};
\draw[] (3,0) circle [radius=0.08];
\node [below] at (3,0)  {$s_{4}$};
\draw [] (2.08,0) -- (2.92,0);
\draw (1.05,.05) -- (1.95,.05);
\draw (1.05,-.05) -- (1.95,-.05);
\draw (1.45,.1) -- (1.55,0) -- (1.45,-.1);
\node at (-1,0){$F_4$:};
\end{tikzpicture}
\end{figure}
\begin{Proposition}
Theorem \ref{thm:by type} holds when $K$ is a connected type $F_4$ root system.
\end{Proposition}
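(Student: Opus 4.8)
The plan is to mirror the proofs in the classical types. By Lemma~\ref{lemma one product} there is a constant $C$ with $C\cdot p_{v_K}=\prod_{\alpha_i\in K}p_{s_i}$; since $H_{S^1}^*(pt)\cong\mathbb{C}[t]$ is an integral domain it suffices to evaluate both sides at a single $S^1$-fixed point at which $p_{v_K}$ does not vanish. The natural choice is $w_K=w_0$, the longest element of the Weyl group of $F_4$, where $p_{v_K}(w_K)\neq 0$ by Lemma~\ref{non-zero}. Hence
\[
C=\frac{\prod_{i=1}^4 p_{s_i}(w_K)}{p_{v_K}(w_K)},
\]
and the whole argument reduces to computing the five numbers $p_{v_K}(w_K)$ and $p_{s_1}(w_K),\dots,p_{s_4}(w_K)$ and verifying that the ratio equals $4!=24$.

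To obtain these evaluations I would follow the recipe of Section~6, exactly as in the type~$E$ proof. First I would fix the reduced word for $w_0$ produced by Sage, $w_0=s_{j_1}\cdots s_{j_{24}}$ (note $\ell(w_0)=|\Phi^+|=24$ in type $F_4$), and record the two lists $L_{w_{F_4}}$ and $L_{p(w_{F_4})}$ as in Figure~\ref{fig:e lists}: the $i$-th entry of $L_{p(w_{F_4})}$ is $\tfrac1t\,\pi_1(\mathbf r(\mathbf i,w_K))$, the height in the root poset of the root $\mathbf r(\mathbf i,w_K)$. Because type $F$ has no pair of commuting adjacent simple reflections, $v_K=s_1s_2s_3s_4$ has a unique reduced word, so $p_{v_K}(w_K)=\bigl(\sum_\mu M_p(\mu)\bigr)t^4$, where $\mu$ ranges over the sublists of $L_{w_{F_4}}$ equal to $s_1,s_2,s_3,s_4$; similarly $p_{s_i}(w_K)$ is $t$ times the sum of the entries of $L_{p(w_{F_4})}$ in the positions where $L_{w_{F_4}}$ reads $s_i$. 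These sums are finite and can be read off directly, and I would display the resulting table of values together with the Sage code.

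The step I expect to be the main obstacle is the faithful bookkeeping of root heights needed to fill $L_{p(w_{F_4})}$: since $F_4$ is not simply laced, reflecting a long root across a short simple root (or a short root across a long one) can change a coefficient by $2$, so the clean inductive patterns used for types $A$ through $D$ do not carry over and each root $\mathbf r(\mathbf i,w_0)$ must be tracked individually. Fortunately the rank is only $4$ and there are only $24$ positive roots, so this is a finite check, reproducible by hand and confirmed in Sage. Once the table is in place the proof closes with the one-line arithmetic identity $\bigl(\prod_{i=1}^4 p_{s_i}(w_K)\bigr)/p_{v_K}(w_K)=24=4!$, which is the assertion of Theorem~\ref{thm:by type} in type $F_4$.
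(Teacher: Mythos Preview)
Your plan is essentially the paper's proof. Both reduce to computing the five numbers $p_{v_K}(w_K)$ and $p_{s_i}(w_K)$ for $i=1,\dots,4$ at the longest element of $W(F_4)$ and then reading off the constant. The only cosmetic difference is the framing: the paper passes through the already-established type~$B_3$ case by writing $J=K\setminus\{\alpha_4\}$ and using Monk's rule to get $c_{4,J}^K\cdot p_{v_K}=\tfrac{1}{3!}\prod_i p_{s_i}$, so it solves for $c_{4,J}^K=4$ and concludes $C=4\cdot 3!=4!$; you invoke Lemma~\ref{lemma one product} directly and compute $C=\bigl(\prod_i p_{s_i}(w_K)\bigr)/p_{v_K}(w_K)=24$. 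The underlying arithmetic is identical (the paper reports $p_{v_K}(w_K)=18480t^4$ and $p_{s_i}(w_K)=22t,\,42t,\,30t,\,16t$), and your approach avoids an unnecessary appeal to the $B_3$ result, while the paper's framing makes the induction-on-rank flavor a bit more explicit.
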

\begin{proof}
If $K$ is a connected root system of type $F_4$ then $J=K \setminus \lbrace s_4 \rbrace$ is a root subsystem of type $B_3$ and therefore
$$c^{K}_{4,J} \cdot p_K=p_{s_4}p_{v_J}= \frac{1}{3!} \prod \limits_{i=1}^4 p_i.$$
Evaluating at $w_K=s_4s_3s_2s_3s_1s_2s_3s_4s_3s_2s_3s_1s_2s_3s_4s_3s_2s_3s_1s_2s_3s_1s_2s_1$ gives $$ \begin{array}{ccc}
&p_K(w_K)=18480t^4=2^4\cdot 3\cdot 5 \cdot 7 \cdot 11 \cdot t^4 & \\
p_1(w_K)=22t &&
p_2(w_K)=42t\\
p_3(w_K)= 30t&&
p_4(w_K)=16t
\end{array}$$
Since $$c^{K}_{4,J}\cdot p_K(w_K)=\frac{1}{3!}\prod \limits_{i=1}^4 p_i(w_K),$$
we can solve for $c^{K}_{4,J}$ to see that $c^{K}_{4,J}=4$.  Thus 
$4! \cdot p_K= (|K|)!\cdot p_K=\prod \limits_{i=1}^4 p_i.$
   \end{proof}

\subsection{Type $G_2$}
\begin{figure}[h]
\begin{tikzpicture}
\draw[] (0,0) circle [radius=0.08];
\node [below] at (0,0)  {$s_1$};
\draw[] (1,0) circle [radius=0.08];
\node [below] at (1,0)  {$s_2$};
\draw (0.08,0) -- (.92,0);
\draw (0.05,.05) -- (0.95,.05);
\draw (.05,-.05) -- (.95,-.05);
\draw (.55,.1) -- (.45,0) -- (.55,-.1);
\node at (-1,0){$G_2$:};
\end{tikzpicture}
\end{figure}
\begin{Proposition}
Theorem \ref{thm:by type} holds when $K$ is a connected type $G_2$ root system.
\end{Proposition}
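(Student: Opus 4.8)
The plan is to mimic the $F_4$ argument, exploiting the fact that $W(G_2)$ is small enough to evaluate everything by hand. Write $K = \{\alpha_1, \alpha_2\}$ and set $J = K \setminus \{s_2\} = \{\alpha_1\}$, a connected subsystem of type $A_1$; by the already-established type $A$ case of Theorem \ref{thm:by type} we have $p_{v_J} = p_{s_1}$. From Equation \eqref{eq:pre-monk}, since $p_{s_2}(w_J)=0$ and $K$ is the unique subset of size $2$ strictly containing $J$, the product $p_{s_2}\cdot p_{v_J}$ collapses and
\[
c^K_{2,J} \cdot p_{v_K} = p_{s_2} \cdot p_{v_J} = p_{s_1} \cdot p_{s_2} = \prod_{\alpha_i \in K} p_{s_i},
\]
so it remains only to show $c^K_{2,J} = 2 = |K|!$ (equivalently, this is the constant $C$ of Lemma \ref{lemma one product} for $K$).

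For this I would make the fixed point $w_K$ explicit: $W(G_2)$ is dihedral of order $12$, so $\ell(w_K) = \ell(w_0) = 6$ and we may take the reduced word $w_K = s_1 s_2 s_1 s_2 s_1 s_2$ (Billey's formula does not depend on this choice, by Property \ref{billey5} of Proposition \ref{prop:billey}). Running Billey's formula, Equation \eqref{eq:Billeys}, along this word produces each of the six positive roots of $G_2$ exactly once; recording their heights in the root poset, i.e. the labels $\tfrac{1}{t}\pi_1(\mathbf{r}(\mathbf{j}, w_K))$, yields the sequence $[\,1,\,4,\,3,\,5,\,2,\,1\,]$ over the positions labelled $[\,s_1,\,s_2,\,s_1,\,s_2,\,s_1,\,s_2\,]$. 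From this one reads off $p_{s_1}(w_K) = (1+3+2)\,t = 6t$, $p_{s_2}(w_K) = (4+5+1)\,t = 10t$, $p_{v_J}(w_K) = p_{s_1}(w_K) = 6t$, and, summing the products $M(\mu)$ over the six $s_1 s_2$-subwords of $w_K$, $p_{v_K}(w_K) = p_{s_1 s_2}(w_K) = (4 + 5 + 1 + 15 + 3 + 2)\,t^2 = 30\,t^2$.

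Since $\alpha_2 \notin J$ we have $p_{s_2}(w_J) = 0$, so Theorem \ref{thm:Monk formula} gives
\[
c^K_{2,J} = \bigl(p_{s_2}(w_K) - p_{s_2}(w_J)\bigr)\cdot \frac{p_{v_J}(w_K)}{p_{v_K}(w_K)} = 10t \cdot \frac{6t}{30\,t^2} = 2,
\]
and combining this with the displayed identity above yields $2!\cdot p_{v_K} = \prod_{\alpha_i \in K} p_{s_i}$, which is exactly the $G_2$ case of Theorem \ref{thm:by type}. I expect no genuine obstacle: the computation is of the same size as the $F_4$ case and is essentially bookkeeping. The one point that requires care is fixing the $G_2$ Cartan conventions (which of $\alpha_1,\alpha_2$ is the short root) before applying the reflections in Billey's formula; but the whole argument is symmetric in $s_1$ and $s_2$ — one could equally take $J = K\setminus\{s_1\}$ and compute $c^K_{1,J}= 6t\cdot\frac{10t}{30t^2}=2$ — so the final value $c^K_{2,J} = 2$, and hence the theorem, is insensitive to that choice. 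If a diagrammatic presentation is preferred, the modified excited Young diagrams of Section 6 apply verbatim, but for a rank-two root system the direct evaluation above is shorter.
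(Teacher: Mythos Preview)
Your proposal is correct. The paper's own proof is even more direct: it simply displays the four Peterson Schubert classes $p_\emptyset, p_{s_1}, p_{s_2}, p_{s_1s_2}$ as $4$-tuples (evaluated at $e, s_1, s_2, w_K$) and reads off $2\cdot p_{v_K}=p_{s_1}\cdot p_{s_2}$ componentwise. Your argument instead follows the $F_4$ template, invoking Lemma~\ref{lemma one product} and computing the single structure constant $c^K_{2,J}$ via Theorem~\ref{thm:Monk formula}; the underlying evaluations $p_{s_1}(w_K)=6t$, $p_{s_2}(w_K)=10t$, $p_{v_K}(w_K)=30t^2$ agree with the paper's table, so the two routes differ only in packaging.
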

\begin{proof}
Since in type $G_2$ the ring $H_{S^1}^*(Pet_{G_2})$ has only four Peterson Schubert classes, we give the basis explicitly evaluated at $e,s_1,s_2,$ and $s_1s_2s_1s_2s_1s_2$:
$$\begin{array}{cccc}
p_\emptyset & p_{\lbrace 1 \rbrace} & p_{\lbrace 2 \rbrace} & p_{\lbrace 1,2 \rbrace} \\
\begin{pmatrix}
1\\
1\\
1\\
1\\
\end{pmatrix} 
&
\begin{pmatrix}
0\\
t\\
0\\
6t\\
\end{pmatrix} 
&
\begin{pmatrix}
0\\
0\\
t\\
10t\\
\end{pmatrix} 
&
\begin{pmatrix}
0\\
0\\
0\\
30t^2\\
\end{pmatrix} \\
\end{array}$$
From this basis we see that $2\cdot p_K = \prod \limits_{i\in K} p_{s_i}$ and of course $|K|!=2$. 
   \end{proof}

\section{Acknowledgments} 
Thank you to Julianna Tymoczko for her help and guidance at all stages of this project.  Thank you to Dave Anderson, Tom Braden,  David Cox,  Allen Knutson, Aba Mbirika, Jessica Sidman, and Alex Yong  for helpful comments and conversations.  I am grateful to Erik Insko for getting me started with Sage and to Hans Johnston and Jeff Hatley for the computer access and programming help.  Lastly, thank you to the anonymous referees for their detailed comments on how to improve this manuscript. {This work was partially supported by NSF grant DMS-1248171.}

\end{document}